\numberwithin{equation}{section}
\def\3bar{{|\hspace{-.02in}|\hspace{-.02in}|}}
\def\E{{\mathcal{E}}}
\def\T{{\mathcal{T}}}
\def\pT{{\partial T}}
\def\W{{\mathcal{W}}}
\def\bn{{\mathbf{n}}}
\def\bx{{\mathbf{x}}}
\def\bF{{\textbf{F}}}
\def\ljump{{[\![}}
\def\rjump{{]\!]}}
\def\bbeta{{\boldsymbol{\beta}}}
\newtheorem{algorithm}{$L^p$-Primal-Dual Weak Galerkin Algorithm}[section]
\def\sgn{\operatornamewithlimits{sgn}}
\def\3bar{{|\hspace{-.02in}|\hspace{-.02in}|}}
\title{An $L^p$-primal-dual finite element method for first-order transport problems}
\author{
Dan Li\thanks{School of Mathematical Sciences,  Nanjing Normal University, Nanjing
210023, China (danlimath@163.com).} \and
Chunmei Wang \thanks{Department of Mathematics, University of Florida, Gainesville, FL 32611 (chunmei.wang@ufl.edu).  The research of Chunmei Wang was partially supported by National Science Foundation Grants DMS-2136380 and DMS-2206332.}
\and
Junping Wang\thanks{Division of Mathematical Sciences, National Science Foundation, Alexandria, VA 22314 (jwang@nsf.gov). The research of Junping Wang was supported by the NSF IR/D program, while working at National Science Foundation. However, any opinion, finding, and conclusions or recommendations expressed in this material are those of the author and do not necessarily reflect the views of the National Science Foundation.}}
\begin{document}

\maketitle
\begin{abstract}
A new $L^p$-primal-dual weak Galerkin method ($L^p$-PDWG) with $p>1$ is proposed for the first-order transport problems. The existence and uniqueness of the $L^p$-PDWG numerical solutions is established. In addition, the $L^p$-PDWG method offers a numerical solution which retains mass conservation locally on each element. An optimal order error estimate is established for the primal variable. A series of numerical results are presented to verify the efficiency and accuracy of the proposed $L^p$-PDWG scheme.
\end{abstract}

\begin{keywords}  primal-dual, weak Galerkin, transport equation, weak regularity, $L^p$ stabilizer, conservative methods.
\end{keywords}

\begin{AMS}
Primary, 65N30, 65N15, 65N12; Secondary, 35L02, 35F15, 35B45.
\end{AMS}

\pagestyle{myheadings}

\section{Introduction}
This paper is concerned with the new development of numerical methods for the first-order linear convection equations in divergence form by using a new $L^p$-primal-dual weak Galerkin method. For simplicity, we consider the model problem that seeks an unknown function $u$ satisfying
\begin{equation}\label{model}
\begin{split}
\nabla\cdot (\bbeta u)+c u=&f, \qquad \text{in }\Omega,\\
u=&g, \qquad  \text{on }\Gamma_-,
\end{split}
\end{equation}
where $\Omega \subset\mathbb R^d (d=2,3)$ is an open bounded and connected domain with Lipschitz continuous boundary $\partial\Omega$,   $\Gamma_-$ is the inflow portion of the boundary such that
$$
\Gamma_-=\{\bx\in\partial\Omega: \ \bbeta(\bx)\cdot\bn<0\},
$$
with $\bn$ being an unit outward normal vector along the boundary $\partial\Omega$. Assume that the convection vector  $\bbeta\in[L^{\infty}(\Omega)]^d$ is locally $W^{1,\infty}$, the reaction coefficient $c\in L^{\infty}(\Omega)$ is bounded, the load function $f\in L^q(\Omega)$ and the inflow boundary data $g\in L^q(\Gamma_-)$ are two given functions with $\frac{1}{p}+\frac{1}{q}=1$.

The first-order linear convection equations have  wide applications in science and engineering problems. For the non-smooth inflow boundary data, the exact solution of model problem \eqref{model} possesses discontinuity. It is well known that the solution discontinuity imposes a challenge on the development of efficient numerical methods. To address the challenge, a variety of numerical methods for linear convection equations have been developed such as streamline-upwind Petrov-Galerkin method \cite{BH1985, JNP1984}, stabilized finite element methods \cite{EB2014, JLG1999, RC2000, BH2004, BB2004}, least-square finite element method \cite{MY2018, CJ1988, BC2001, SMMO2005}, and discontinuous Galerkin finite element methods \cite{JP1986, HJS2002, BC1999, CS2001, EB2005, RH1973, EMS2004, EQS2010, BS2007}. Most of the aforementioned numerical methods assumed the coercivity assumption in the form of $c+\frac{1}{2}\nabla\cdot\bbeta\geq\alpha_0>0$ for a fixed constant $\alpha_0$ or alike. However, this assumption imposes a strong restriction in practice and often rules out some important physics such as exothermic reactions \cite{EB2014}. In \cite{EB2014}, the authors developed a stabilized finite element method for linear transport equation with smooth coefficients $\bbeta\in[W^{2,\infty}(\Omega)]^d$ and $c\in W^{1,\infty}(\Omega)$. In \cite{WW2020}, the primal-dual weak Galerkin method was proposed and analyzed for the linear transport equation with $\bbeta\in[W^{1,\infty}(\Omega)]^d$ and $c\in L^\infty(\Omega)$. In \cite{LWW2022}, the authors proposed a new primal-dual weak Galerkin method for transport equation in non-divergence form with locally  coefficients $\bbeta\in[C^{1,\alpha_0}]^d$ and $c\in C^{0,\alpha_0}$, where the PDWG method was first developed in \cite{ww2016} to solve many partial differential equations \cite{cww2022,ww2018,ww2020,wwec2020,w2020,wz2021} that are difficult to be approximated by using traditional numerical methods. The basic principle of primal-dual weak Galerkin method is to characterize this method as a linear $L^2$ minimization problem with constraints that satisfy the certain conditions weakly on each finite element.

The objective of this paper is to present an $L^p$-primal-dual weak Galerkin method for the linear convection problem \eqref{model} without enforcing the coercivity condition on the convection vector $\bbeta$ and the reaction coefficient $c$. The $L^p$-PDWG method has been successfully applied to solve some partial differential equations such as div-curl systems \cite{CWWdiv}, second order elliptic equations in non-divergence form \cite{CWWsecond}, convection-diffusion equations \cite{CWW2023}. Our numerical algorithm can be characterized as a constrained nonlinear $L^p$ optimization problem with constraints that satisfy the model equation weakly on each finite element by using locally designed weak gradient operator. Different from the numerical method in \cite{WW2020,LWW2022}, a more general $L^p$ stabilizer is introduced. The solvability and stability of $L^p$-PDWG method is discussed by introducing an equivalent min-max characterization and   an error estimate of optimal order is established for the numerical approximation.
In addition, a mass conservative numerical scheme is constructed to approximate the linear convection problem \eqref{model} where this numerical scheme is based on the $L^p$-primal-dual weak Galerkin technique.

This paper is organized as follows. In Section \ref{Section:DWG}, we introduce a weak formulation for the first-order transport problem \eqref{model} and then briefly review the definition of weak gradient operator. Section \ref{Section:WGFEM} presents an $L^p$-PDWG scheme for the model problem \eqref{model}. The goal of Section \ref{Section:EU} is to study the  solution existence and uniqueness for this numerical scheme. Section \ref{Section:MC} is devoted to discussing the mass conservation property locally on each polytopal element. Section \ref{Section:EE} derives an error equation and    some error estimates for the numerical approximation are established in Section \ref{Section:L2Error}. In Section \ref{Section:Numerics}, various numerical experiments are presented to demonstrate the performance of the $L^p$-primal-dual weak Galerkin method.

This paper follows the standard definitions and notations for Sobolev spaces and norms. Let $D\subset \mathbb{R}^d$ be any open bounded domain with Lipschitz continuous boundary.  Denote by $\|\cdot\|_{L^p(D)}$ the norm in the Sobolev space $W^{0,p}(D)$. Moreover, we use $C$ to denote the generic positive constant independent of meshsize or functions appearing in the equalities.

\section{Weak Formulation and Discrete Weak Gradient}\label{Section:DWG}
In this section, we shall introduce the weak formulation  for the linear convection equation in divergence form (\ref{model}). In addition, we shall briefly review the weak gradient operator as well as its discrete version   \cite{wy}.

The weak formulation for the model problem (\ref{model}) is as follows: Find $u \in L^q(\Omega)$ such that \begin{equation}\label{weakform}
 (u, \bbeta \cdot \nabla\sigma- c \sigma)=\langle g, \bbeta \cdot \bn \sigma\rangle_{\Gamma_-}-(f,\sigma), \qquad \forall \sigma\in W_{0,\Gamma_+}^{1,p}(\Omega).
\end{equation}
 Here  $\Gamma_+=\partial \Omega \setminus \Gamma_-$ is the outflow boundary satisfying $\bbeta \cdot \bn \geq 0$, and $W_{0,\Gamma_+}^{1,p}(\Omega)$ is the subspace of $W^{1,p}(\Omega)$ with vanishing boundary value on $\Gamma_+$; i.e.,
$$
W_{0,\Gamma_+}^{1,p}(\Omega)=\{v \in W^{1,p}(\Omega): v=0 \ \text{on}\ \Gamma_+\}.
 $$

Let $T$ be a polygonal or polyhedral domain with boundary $\partial T$. Denote by $v=\{v_0,v_b\}$ a weak function on $T$ such that $v_0\in L^p(T)$ and $v_b\in L^{p}(\partial T)$. The components $v_0$ and $v_b$ represent the values of $v$ in the interior and on the boundary of $T$, respectively. Note that $v_b$ may not necessarily be the trace of $v_0$ on $\partial T$. Denote by $\W(T)$ the space of weak functions on $T$; i.e.,
\begin{equation*}\label{2.1}
\W(T)=\{v=\{v_0, v_b\}: v_0 \in L^p(T), v_b \in L^{p}(\partial T)\}.
\end{equation*}

The weak gradient of $v\in \W(T)$, denoted by $\nabla_w v$, is defined as a continuous linear functional in the Sobolev space $[W^{1,q}(T)]^d$ satisfying
\begin{equation*}
(\nabla_w  v,\boldsymbol{\psi})_T=-(v_0,\nabla \cdot \boldsymbol{\psi})_T+\langle v_b,\boldsymbol{\psi}\cdot \textbf{n}\rangle_{\partial T},  \qquad \forall \boldsymbol{\psi}\in [W^{1,q}(T)]^d.
\end{equation*}

Denote by $P_r(T)$ the space of polynomials on $T$ with degree no more than $r$. A discrete version of $\nabla_{w} v$  for $v\in \W(T)$, denoted by $\nabla_{w, r, T}v$, is defined as a unique polynomial vector in $[P_r(T)]^d$ such that
\begin{equation}\label{disgradient}
(\nabla_{w, r, T} v, \boldsymbol{\psi})_T=-(v_0, \nabla \cdot \boldsymbol{\psi})_T+\langle v_b, \boldsymbol{\psi} \cdot \textbf{n}\rangle_{\partial T}, \quad\forall\boldsymbol{\psi}\in [P_r(T)]^d,
\end{equation}
which, using the usual integration by parts, yields
\begin{equation}\label{disgradient*}
(\nabla_{w, r, T} v, \boldsymbol{\psi})_T= (\nabla v_0, \boldsymbol{\psi})_T-\langle v_0- v_b, \boldsymbol{\psi} \cdot \textbf{n}\rangle_{\partial T}, \quad\forall\boldsymbol{\psi}\in [P_r(T)]^d,
\end{equation}
provided that $v_0\in W^{1,p}(T)$.

\section{ $L^p$ Primal-Dual Weak Galerkin Algorithm}\label{Section:WGFEM}
Denote by ${\cal T}_h$ a partition of the domain $\Omega \subset {\mathbb R}^d (d=2, 3)$ into polygons in 2D or polyhedra in 3D which is shape regular described as in \cite{wy3655}. Let ${\mathcal E}_h$ be the set of all edges or flat faces in ${\cal T}_h$ and ${\mathcal E}_h^0={\mathcal E}_h \setminus \partial\Omega$ be the set of all interior edges or flat faces. Denote by $h_T$ the meshsize of $T\in {\cal T}_h$ and
$h=\max_{T\in {\cal T}_h}h_T$ the meshsize of the partition ${\cal T}_h$.

For any integer $j\geq 0$, let $W_j(T)$ be the local space of discrete weak functions; i.e.,
$$
W_j(T)=\{\{\sigma_0,\sigma_b\}:\sigma_0\in P_j(T),\sigma_b\in P_j(e), e\subset \partial T\}.
$$
Patching $W_j(T)$ over all the elements $T\in {\cal T}_h$ through a common value $v_b$ on the interior interface $\E_h^0$ gives rise to a global weak finite element space $W_{j,h}$. Let $W_{j,h}^{0, \Gamma_+}$ be the subspace of $W_{j,h}$ with vanishing boundary value on $\Gamma_+$; i.e.,
$$
W_{j,h}^{0, \Gamma_+}=\{\{\sigma_0, \sigma_b\}\in W_{j,h}: \sigma_b|_{e}=0, e\subset \Gamma_+\}.
$$
For any integer $k\ge 1$, let $M_{k-1,h}$ be the space of piecewise polynomials of degree $k-1$; i.e.,
$$
M_{k-1,h}=\{w: w|_T\in P_{k-1}(T), \forall T\in {\cal T}_h\}.
$$

The discrete weak gradient $\nabla_{w,r,T}$ is taken in the polynomial vector space  $[P_r(T)]^d$  with $r=k-1$. For simplicity of notation and without confusion, denote by $\nabla_{w}\sigma$ the discrete weak gradient
$\nabla_{w, k-1, T}\sigma$ for any $\sigma\in W_{j,h}$ computed by
(\ref{disgradient}) on each element $T$; i.e.,
$$
(\nabla_{w}\sigma)|_T= \nabla_{w, k-1, T}(\sigma|_T), \qquad \sigma\in W_{j, h}.
$$

For any $\lambda, \sigma\in W_{j,h}$ and $v\in M_{k-1,h}$, we introduce the
following bilinear forms
\begin{eqnarray}
s(\lambda, \sigma)&=&\sum_{T\in {\cal T}_h}s_T(\lambda, \sigma),\label{stabilizer}
\\
b(v, \sigma)&=&\sum_{T\in {\cal T}_h}b_T(v, \sigma),\label{b-form}
\end{eqnarray}
where
\begin{equation}\label{stabilizer-local}
\begin{split}
s_T(\lambda, \sigma) =& \rho h_T^{1-p}\int_{\partial T} |\lambda_0-\lambda_b |^{p-1}sgn(\lambda_0-\lambda_b)(\sigma_0-\sigma_b)ds\\
&  + \tau \int_{ T} |\bbeta\cdot\nabla\lambda_0-c\lambda_0|^{p-1}sgn(\bbeta\cdot\nabla\lambda_0
-c\lambda_0)(\bbeta\cdot\nabla\sigma_0-c\sigma_0)dT,
\end{split}
\end{equation}
\begin{equation}\label{b-form-local}
\begin{split}
b_T(v, \sigma) =(v, \bbeta \cdot \nabla_w \sigma -c \sigma_0)_T,
\end{split}
\end{equation}
 where   $\rho>0$ and $\tau\geq 0$.

The $L^p$-PDWG numerical scheme for the linear convection equation (\ref{model}) based on the variational formulation (\ref{weakform}) is as follows:
\begin{algorithm}
Find $(u_h;\lambda_h)\in M_{k-1,h} \times W_{j,h}^{0, \Gamma_+}$ such that
\begin{eqnarray}\label{32}
s(\lambda_h, \sigma)+b(u_h,\sigma)&=& \sum_{e\subset \Gamma_-}\langle \sigma_b, \bbeta \cdot \bn g  \rangle_{e}-(f,\sigma_0), \qquad \forall\sigma\in W_{j,h}^{0, \Gamma_+},\\
b(v,\lambda_h)&=&0,\qquad \qquad\qquad\qquad \qquad \qquad \forall v\in M_{k-1,h}.\label{2}
\end{eqnarray}
\end{algorithm}

\section{Solution Existence and Uniqueness}\label{Section:EU}

The adjoint problem for the linear transport equation is as follows:  Find $\Psi$ such that
\begin{eqnarray}\label{mo}
\bbeta \cdot \nabla \Psi-c\Psi&=&\theta, \qquad \mbox{ in } \Omega,\\
\Psi & = &0, \qquad\mbox { on } \Gamma_+, \label{mo-2}
\end{eqnarray}
where $\theta \in L^q(\Omega)$ is a given function. We assume that the adjoint problem \eqref{mo}-\eqref{mo-2} has a  unique solution.

 On each element $T$, denote by $Q_0$ the $L^2$ projection operator onto $P_j(T)$. For each edge or face $e\subset\partial T$, denote by $Q_b$ the $L^2$ projection operator onto $P_{j}(e)$. For any $w\in H^1(\Omega)$,  denote by $Q_h w$  the $L^2$ projection of $w$ onto the finite element space $W_{j,h}$ such that on each element $T$, $Q_hw=\{Q_0w,Q_bw\}$.
 Denote by ${\cal Q}_h$ the $L^2$ projection operator onto the space $M_{k-1,h}$.

\begin{lemma}\label{Lemma5.1} \cite{wy3655} For $j\ge k-1$, the $L^2$ projection operators $Q_h$ and ${\cal Q}_h$ satisfy the following commutative property:
\begin{equation}\label{l}
\nabla_{w}(Q_h w) = {\cal Q}_h(\nabla w), \qquad \forall w\in H^1(T).
\end{equation}
\end{lemma}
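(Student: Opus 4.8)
The plan is to prove the commutative identity \eqref{l} directly from the definitions of the two $L^2$ projections and the discrete weak gradient \eqref{disgradient}, the key point being that the discrete weak gradient lands in $[P_r(T)]^d$ with $r=k-1$, which is exactly the space onto which ${\cal Q}_h$ projects. First I would fix $T\in{\cal T}_h$, take an arbitrary test vector $\boldsymbol{\psi}\in[P_{k-1}(T)]^d$, and expand $(\nabla_{w}(Q_hw),\boldsymbol{\psi})_T$ using the definition \eqref{disgradient} of the discrete weak gradient applied to the weak function $Q_hw=\{Q_0w,Q_bw\}$:
\begin{equation*}
(\nabla_{w}(Q_hw),\boldsymbol{\psi})_T=-(Q_0w,\nabla\cdot\boldsymbol{\psi})_T+\langle Q_bw,\boldsymbol{\psi}\cdot\textbf{n}\rangle_{\partial T}.
\end{equation*}
Since $\nabla\cdot\boldsymbol{\psi}\in P_{k-2}(T)\subset P_j(T)$ when $j\ge k-1$, the definition of the $L^2$ projection $Q_0$ gives $(Q_0w,\nabla\cdot\boldsymbol{\psi})_T=(w,\nabla\cdot\boldsymbol{\psi})_T$; similarly, since $\boldsymbol{\psi}\cdot\textbf{n}|_e\in P_{k-1}(e)\subset P_j(e)$, the definition of $Q_b$ gives $\langle Q_bw,\boldsymbol{\psi}\cdot\textbf{n}\rangle_{\partial T}=\langle w,\boldsymbol{\psi}\cdot\textbf{n}\rangle_{\partial T}$ (here using that $w\in H^1(T)$ has a well-defined trace, so $Q_bw$ is the projection of $w|_{\partial T}$).

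Substituting these two identities back, the right-hand side becomes $-(w,\nabla\cdot\boldsymbol{\psi})_T+\langle w,\boldsymbol{\psi}\cdot\textbf{n}\rangle_{\partial T}$, which by the ordinary integration-by-parts formula (valid since $w\in H^1(T)$ and $\boldsymbol{\psi}$ is a polynomial) equals $(\nabla w,\boldsymbol{\psi})_T$. On the other hand, because $\boldsymbol{\psi}\in[P_{k-1}(T)]^d$ is exactly in the range of ${\cal Q}_h$ on $T$, the definition of the $L^2$ projection ${\cal Q}_h$ yields $(\nabla w,\boldsymbol{\psi})_T=({\cal Q}_h(\nabla w),\boldsymbol{\psi})_T$. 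Chaining the equalities gives $(\nabla_{w}(Q_hw),\boldsymbol{\psi})_T=({\cal Q}_h(\nabla w),\boldsymbol{\psi})_T$ for all $\boldsymbol{\psi}\in[P_{k-1}(T)]^d$; since both $\nabla_{w}(Q_hw)|_T$ and ${\cal Q}_h(\nabla w)|_T$ are themselves elements of $[P_{k-1}(T)]^d$, the two must coincide, which is \eqref{l}. Summing (or rather, arguing element by element) over $T\in{\cal T}_h$ completes the proof.

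There is essentially no genuine obstacle here: the result is a bookkeeping lemma and its proof is the short computation above. The one point that requires a moment of care is the degree count that makes the projections transparent, namely that $\nabla\cdot\boldsymbol{\psi}$ has degree $\le k-2\le j$ and $\boldsymbol{\psi}\cdot\textbf{n}$ restricted to a face has degree $\le k-1\le j$, so that the hypothesis $j\ge k-1$ is used in exactly the right place; and one should remember that the conclusion holds on each $T$ and hence globally once the weak functions are patched through a single-valued $v_b$ on $\E_h^0$. I would present the proof in the two-display form above, making the hypothesis $j\ge k-1$ explicit at the step where $Q_0w$ and $Q_bw$ are replaced by $w$.
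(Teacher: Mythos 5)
Your proof is correct and is exactly the standard argument for this commutativity identity: the paper itself gives no proof but cites Wang--Ye (reference \cite{wy3655}), where the lemma is established by the same computation you present---test against $\boldsymbol{\psi}\in[P_{k-1}(T)]^d$, use the $L^2$ projection properties of $Q_0$ and $Q_b$ (which is where $j\ge k-1$ enters), integrate by parts, and identify the result with ${\cal Q}_h(\nabla w)$. Your degree-count remarks and the observation that both sides lie in $[P_{k-1}(T)]^d$ are precisely the points that make the argument complete, so nothing further is needed.
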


For the convenience of analysis, in what follows of this paper, we assume that the convection vector $\bbeta$ and the reaction coefficient $c$ are   piecewise constants with respect to the partition $\T_h$.
However, the analysis and results can be generalized  to piecewise smooth cases for the convection vector $\bbeta$ and the reaction coefficient $c$ without any difficulty.

 In order to prove the existence for the numerical solution $(u_h;\lambda_h)$ arising from $L^p$-PDWG scheme (\ref{32})-(\ref{2}), we introduce a functional defined in the finite element spaces $M_{k-1,h}\times W_{j,h}^{0,\Gamma_+}$; i.e.,
$$J(v,\sigma)=\frac{1}{p}s(\sigma,\sigma)+b(v,\sigma)-(F,\sigma),$$
where $(F,\sigma)=\sum_{e\subset\Gamma_-}\langle\sigma_b,\bbeta\cdot\bn g\rangle_{e}-(f,\sigma_0).$

Using \eqref{2} gives
\begin{equation*}
J(v,\lambda_h)=\frac{1}{p}s(\lambda_h,\lambda_h)-(F,\lambda_h),\quad\forall v\in M_{k-1,h},
\end{equation*}
which implies
\begin{equation}\label{15:31}
J(v,\lambda_h)=J(u_h,\lambda_h).
\end{equation}

 It follows from (\ref{32}) that $D_\sigma J(u_h,\lambda_h)(\sigma)=0$ holds true for any $\sigma\in W_{j,h}^{0, \Gamma_+}$, where $D_\sigma J(u_h,\lambda_h)(\sigma)$ represents the Gateaux partial derivative at $\lambda_h$ in the direction of $\sigma$. From the convexity of $J(u_h,\sigma)$ in the direction of $\sigma$, we have
\begin{equation}\label{15:33}
J(u_h,\lambda_h)\leq J(u_h,\sigma).
\end{equation}
Combining \eqref{15:31} with \eqref{15:33} gives
$$
J(v,\lambda_h)\leq J(u_h,\lambda_h)\leq J(u_h,\sigma),
$$
which implies that $(u_h;\lambda_h)$ is the saddle point of $J(v,\sigma)$.

Thus, the numerical scheme (\ref{32})-(\ref{2}) can be formulated as a $\min$-$\max$ problem that seeks $(u_h;\lambda_h)\in M_{k-1,h}\times W_{j,h}^{0,\Gamma_+}$ such that
\begin{equation}\label{15:35}
(u_h;\lambda_h)=\arg  \min_{\sigma\in W_{j,h}^{0,\Gamma_+}}   \max_{v\in M_{k-1,h}}J(v,\sigma).
\end{equation}
It is easy to conclude that the $\min$-$\max$ problem \eqref{15:35} has a numerical solution due to the convexity of \eqref{15:35} so that $(u_h;\lambda_h)$ satisfies (\ref{32})-(\ref{2}).

\begin{theorem}\label{thmunique1}
 Assume  that the linear transport problem \eqref{model} has a unique solution. The following results hold true:
\begin{itemize}
\item If $j=k-1$, then the $L^p$-PDWG algorithm (\ref{32})-(\ref{2}) has one and only one solution for $\tau\geq0$.
\item If $j= k$, then the $L^p$-PDWG algorithm (\ref{32})-(\ref{2}) has a unique solution for $\tau>0$.
    \end{itemize}
\end{theorem}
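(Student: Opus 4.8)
The plan is to treat existence and uniqueness separately, with uniqueness carrying the analytic weight. Existence is nearly finished in the discussion above: the scheme is equivalent to the convex min-max problem \eqref{15:35}, and eliminating $v$ (which enters $J$ affinely) reduces it to minimizing $\sigma\mapsto\frac1p s(\sigma,\sigma)-(F,\sigma)$ over the subspace $V_0=\{\sigma\in W_{j,h}^{0,\Gamma_+}:\ b(v,\sigma)=0\ \ \forall v\in M_{k-1,h}\}$. The only missing ingredient is coercivity on $V_0$, i.e.\ that $s(\sigma,\sigma)=0$ together with $\sigma\in V_0$ forces $\sigma=0$; granting this, $s(t\sigma,t\sigma)=|t|^{p}s(\sigma,\sigma)$ and finite dimensionality give $s(\sigma,\sigma)\ge c_0\|\sigma\|^{p}$ on $V_0$, and since $p>1$ the functional is coercive, so a minimizer $\lambda_h$ exists and the multiplier $u_h$ is produced by the linear-algebra identity $(\ker B)^{\perp}=\operatorname{range}(B^{T})$ with $B\sigma:=b(\cdot,\sigma)$. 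Since that coercivity statement is exactly the uniqueness argument below with $\lambda_h^{(2)}=0$, I would establish uniqueness first and recycle it.

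So let $(u_h^{(1)};\lambda_h^{(1)})$ and $(u_h^{(2)};\lambda_h^{(2)})$ both solve \eqref{32}--\eqref{2}; set $\bar u=u_h^{(1)}-u_h^{(2)}$ and $\bar\lambda=\lambda_h^{(1)}-\lambda_h^{(2)}$. Testing \eqref{32} with $\sigma=\bar\lambda$ for each solution, subtracting, and using $b(\bar u,\lambda_h^{(i)})=0$ from \eqref{2} gives $s(\lambda_h^{(1)},\bar\lambda)-s(\lambda_h^{(2)},\bar\lambda)=0$. Because $s(\cdot,\cdot)$ is linear in its second slot, the left-hand side is a sum over $T$ of $\rho h_T^{1-p}\int_{\partial T}\big(|a_1|^{p-1}\sgn a_1-|a_2|^{p-1}\sgn a_2\big)(a_1-a_2)\,ds$, with $a_i=(\lambda_h^{(i)})_0-(\lambda_h^{(i)})_b$, plus the analogous $\tau$-terms in $g_i=\bbeta\cdot\nabla(\lambda_h^{(i)})_0-c(\lambda_h^{(i)})_0$; by the monotonicity $(|a|^{p-1}\sgn a-|b|^{p-1}\sgn b)(a-b)\ge 0$, strict for $a\ne b$ since $p>1$, every term is nonnegative and hence vanishes. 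Thus $\bar\lambda_0=\bar\lambda_b$ on every $\partial T$, so $\bar\lambda_0$ is a continuous piecewise polynomial with $\bar\lambda_0|_{\Gamma_+}=0$; therefore $\bar\lambda_0\in H^1(\Omega)$ and $\bar\lambda=Q_h\bar\lambda_0$. Moreover, when $\tau>0$ the vanishing of the $\tau$-terms also gives $\bbeta\cdot\nabla\bar\lambda_0-c\bar\lambda_0=0$ on every $T$. If $j=k$ and $\tau>0$, this last identity holds a.e.\ in $\Omega$, so $\bar\lambda_0$ solves the homogeneous adjoint problem \eqref{mo}--\eqref{mo-2} and $\bar\lambda_0=0$ by its assumed uniqueness. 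If $j=k-1$ (any $\tau\ge0$), I invoke \eqref{2}: since $\bar\lambda=Q_h\bar\lambda_0$ with $j\ge k-1$, Lemma \ref{Lemma5.1} gives $\nabla_w\bar\lambda=\Q_h(\nabla\bar\lambda_0)$, and since $\bbeta,c$ are piecewise constant and $\bbeta\cdot\nabla\bar\lambda_0-c\bar\lambda_0\in P_{k-1}(T)$, a short computation gives $b(v,\bar\lambda)=(v,\ \bbeta\cdot\nabla\bar\lambda_0-c\bar\lambda_0)_\Omega$ for all $v\in M_{k-1,h}$; choosing $v=\bbeta\cdot\nabla\bar\lambda_0-c\bar\lambda_0\in M_{k-1,h}$ forces this expression to vanish, and adjoint uniqueness again yields $\bar\lambda_0=0$. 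In both cases $\bar\lambda=0$.

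With $\bar\lambda=0$, \eqref{32} collapses to $b(\bar u,\sigma)=0$ for all $\sigma\in W_{j,h}^{0,\Gamma_+}$, and it remains to show $\bar u=0$. I would test with structured weak functions. Taking $\sigma_b\equiv0$ and $\sigma_0$ arbitrary in the piecewise $P_j$ space and applying \eqref{disgradient} with $\boldsymbol{\psi}=\bbeta\bar u\in[P_{k-1}(T)]^d$ (here piecewise-constant $\bbeta$ enters) gives $b_T(\bar u,\sigma)=-(\sigma_0,\ \bbeta\cdot\nabla\bar u+c\bar u)_T$; since $\bbeta\cdot\nabla\bar u+c\bar u\in P_{k-1}(T)\subseteq P_j(T)$, choosing $\sigma_0$ equal to it on each $T$ yields $\bbeta\cdot\nabla\bar u+c\bar u=0$ in every $T$. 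Taking instead $\sigma_0\equiv0$ and $\sigma_b$ arbitrary with $\sigma_b|_{\Gamma_+}=0$, \eqref{disgradient} with the same $\boldsymbol{\psi}$ gives $b_T(\bar u,\sigma)=\langle\sigma_b,\ \bbeta\cdot\bn\,\bar u\rangle_{\partial T}$, and localizing $\sigma_b$ face by face yields $\ljump\bbeta\cdot\bn\,\bar u\rjump=0$ on every interior face and $\bbeta\cdot\bn\,\bar u=0$ on $\Gamma_-$, hence $\bar u=0$ on $\Gamma_-$ since $\bbeta\cdot\bn<0$ there. Integrating $(\bar u,\bbeta\cdot\nabla\phi-c\phi)_\Omega$ by parts element by element and using these three facts shows that $\bar u$ satisfies the weak formulation \eqref{weakform} with $f=g=0$; by the assumed uniqueness for \eqref{model}, $\bar u=0$. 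Combined with the previous paragraph, the $L^p$-PDWG solution is unique.

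The step I expect to be the main obstacle is the $j=k-1$ case above: extracting the pointwise identity $\bbeta\cdot\nabla\bar\lambda_0-c\bar\lambda_0=0$ from the weak constraint \eqref{2} relies essentially on the precise choice $r=k-1$ of the discrete weak gradient, on the commutativity of Lemma \ref{Lemma5.1}, and on the piecewise-constant hypothesis on $\bbeta,c$ that keeps all the projected quantities inside $M_{k-1,h}$; one also has to be careful that the adjoint problem is invoked only through its assumed unique solvability, rather than through any regularity of its solution. By contrast, the monotonicity argument for $\lambda_h$ and the test-function bookkeeping for $\bar u$ are routine once these structural facts are in place.
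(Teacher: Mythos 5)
Your proposal is correct, and its overall skeleton matches the paper's: first show the dual difference $\bar\lambda$ vanishes (stabilizer terms force $\bar\lambda_0=\bar\lambda_b$ on $\partial T$, plus $\bbeta\cdot\nabla\bar\lambda_0-c\bar\lambda_0=0$ elementwise when $\tau>0$; for $j=k-1$ extract the same interior identity from \eqref{2} using $r=k-1$ and the piecewise-constant coefficients; conclude by the adjoint uniqueness assumption), then show $\bar u=0$ by testing $b(\bar u,\sigma)=0$ with interior and face test functions to get the elementwise equation, zero normal-flux jumps, and zero inflow trace, and finally invoke uniqueness for \eqref{model}. The one step where you genuinely diverge is the first: you test with $\sigma=\bar\lambda$ and use pointwise strict monotonicity of $t\mapsto|t|^{p-1}\sgn t$, so that the nonnegative integrands in $s(\lambda_h^{(1)},\bar\lambda)-s(\lambda_h^{(2)},\bar\lambda)=0$ must vanish; the paper instead tests with $\theta_1\lambda_h^{(1)}+\theta_2\lambda_h^{(2)}$, derives $s(\lambda_h^{(1)},\lambda_h^{(1)})=s(\lambda_h^{(2)},\lambda_h^{(1)})$ and its twin, and reaches the same conclusion through Young's inequality and the strict midpoint convexity of $|t|^p$. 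Your monotonicity route is shorter and more transparent, while the paper's argument avoids any explicit pointwise monotonicity lemma; both are valid for $p>1$. You also make the existence part more complete than the paper does: the paper stops at the convex min-max characterization \eqref{15:35}, whereas you reduce to minimizing $\tfrac1p s(\sigma,\sigma)-(F,\sigma)$ on the kernel $V_0$ of $b$, obtain coercivity from the injectivity statement (which is exactly your uniqueness argument applied with the second solution zero, and correctly requires $\tau>0$ when $j=k$), and recover $u_h$ from the finite-dimensional annihilator-of-kernel/range identity; this is a worthwhile tightening of the paper's informal existence claim. Two small bookkeeping points you already handle but should keep explicit: $\bar\lambda_b=0$ follows from $\bar\lambda_0=\bar\lambda_b$ once $\bar\lambda_0\equiv0$, and on $\Gamma_-$ the strict inequality $\bbeta\cdot\bn<0$ is what converts $\bbeta\cdot\bn\,\bar u=0$ into $\bar u=0$.
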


\begin{proof}
Let $(u^{(1)}_h;\lambda^{(1)}_h)$ and $(u^{(2)}_h;\lambda^{(2)}_h)$ be two different solutions of (\ref{32})-(\ref{2}).  Denote
\[
    \epsilon_h=\lambda^{(1)}_h-\lambda^{(2)}_h=\{\epsilon_0,\epsilon_b\},\ \ e_h=u^{(1)}_h-u^{(2)}_h.
\]
    For any constants $\theta_1,\theta_2$,
  we choose $\sigma=\theta_1\lambda^{(1)}_h+\theta_2\lambda^{(2)}_h$ in \eqref{32} and use \eqref{2} to obtain
\[
     s(\lambda^{(1)}_h, \theta_1\lambda^{(1)}_h+\theta_2\lambda^{(2)}_h)-s(\lambda^{(2)}_h, \theta_1\lambda^{(1)}_h+\theta_2\lambda^{(2)}_h)=0.
\]
In particular,  by taking $(\theta_1,\theta_2)=(1,0), (0,1)$, we have
  \begin{equation}\label{33}
     s(\lambda^{(1)}_h, \lambda^{(1)}_h)=s(\lambda^{(2)}_h,\lambda^{(1)}_h),\ \  s(\lambda^{(2)}_h, \lambda^{(2)}_h)=s(\lambda^{(1)}_h,\lambda^{(2)}_h),
\end{equation}
   which yields, together with Young's inequality  $|AB|\le \frac{|A|^p}{p}+\frac{|B|^q}{q}$  with $\frac{1}{p}+\frac{1}{q}=1$, that  \[
   s(\lambda^{(1)}_h, \lambda^{(1)}_h)\le \frac{s(\lambda^{(2)}_h,\lambda^{(2)}_h)}{q}+\frac{s(\lambda^{(1)}_h,\lambda^{(1)}_h)}{p},\ \
    s(\lambda^{(2)}_h, \lambda^{(2)}_h)\le \frac{ s(\lambda^{(1)}_h,\lambda^{(1)}_h)}{q}+\frac{ s(\lambda^{(2)}_h,\lambda^{(2)}_h)}{p},
\]
which yields
\begin{equation}\label{35}
    s(\lambda^{(1)}_h, \lambda^{(1)}_h)=s(\lambda^{(2)}_h, \lambda^{(2)}_h).
\end{equation}
On the other hand,  for any two real numbers $A,B$,  there holds
  \[
   \left|\frac{A+B}{2}\right|^p\le (|A|^p+|B|^p)/2,
 \]
and the equality holds true if and only if $A=B$.  It follows that
 \begin{equation}\label{34}
     s(\frac{\lambda^{(1)}_h+\lambda^{(2)}_h}{2}, \frac{\lambda^{(1)}_h+\lambda^{(2)}_h}{2}) \le  \frac{1}{2} \big( s( \lambda^{(1)}_h, \lambda^{(1)}_h)+ s( \lambda^{(2)}_h, \lambda^{(2)}_h)\big)=s( \lambda^{(1)}_h, \lambda^{(1)}_h).
 \end{equation}
      By \eqref{33}-\eqref{35} and Young's inequality,
\begin{eqnarray*}
    s( \lambda^{(1)}_h, \lambda^{(1)}_h)
     &= &\frac{1}{2}\big ( s( \lambda^{(1)}_h, \lambda^{(1)}_h)+ s( \lambda^{(1)}_h, \lambda^{(2)}_h)\big)
=s( \lambda^{(1)}_h, \frac{ \lambda^{(1)}_h+\lambda^{(2)}_h}{2}) \\
  & \le & \frac 1q{s(\lambda^{(1)}_h,\lambda^{(1)}_h)} +\frac 1p {s(\frac{\lambda^{(1)}_h+\lambda^{(2)}_h}{2}, \frac{\lambda^{(1)}_h+\lambda^{(2)}_h}{2})},
\end{eqnarray*}
  which indicates that
 \[
     s( \lambda^{(1)}_h, \lambda^{(1)}_h)\le {s(\frac{\lambda^{(1)}_h+\lambda^{(2)}_h}{2}, \frac{\lambda^{(1)}_h+\lambda^{(2)}_h}{2})}.
 \]
 From \eqref{35}-\eqref{34}, we easily obtain
\begin{eqnarray*}
    s(\frac{\lambda^{(1)}_h+\lambda^{(2)}_h}{2}, \frac{\lambda^{(1)}_h+\lambda^{(2)}_h}{2}) =s( \lambda^{(1)}_h, \lambda^{(1)}_h)=s( \lambda^{(2)}_h, \lambda^{(2)}_h).
 \end{eqnarray*}
    The above equality holds true if and only if
\begin{equation}\label{e_0-1}
\begin{split}
       \epsilon_0 =& \epsilon_b, \mbox{ on } \pT,\\
       \bbeta \cdot\nabla  \epsilon_0 -c \epsilon_0  =&0, \mbox{ in } T,
\end{split}
\end{equation}
for the case of $\tau>0$,
and
\begin{eqnarray}\label{e_0-2}
       \epsilon_0  =  \epsilon_b, \mbox{ on } \pT,
 \end{eqnarray}
for the case of $\tau=0.$

{\em Case 1:} $j=k-1$ and $\tau=0.$ It follows from (\ref{2}) and (\ref{disgradient*}) that
\begin{equation}\label{new:001-0}
\begin{split}
0=& b(v, \epsilon_h)\\
 =& \sum_{T\in {\cal T}_h} (v, \bbeta \cdot \nabla_w \epsilon_h-c \epsilon_0)_T\\
= & \sum_{T\in {\cal T}_h}  ( \nabla  \epsilon_0, \bbeta v)_T-\langle  \epsilon_0- \epsilon_b, \bbeta v\cdot \bn\rangle_{\partial T}-(v, c \epsilon_0)_T\\
 = &  \sum_{T\in {\cal T}_h}  (\bbeta \cdot \nabla  \epsilon_0 -c{\cal Q}_h\epsilon_0,   v)_T,
\end{split}
\end{equation}
where we have used \eqref{e_0-2} on each  $\partial T$. By taking $v=\bbeta \cdot \nabla  \epsilon_0 -c{\cal Q}_h\epsilon_0$ we obtain
$$
\bbeta \cdot \nabla  \epsilon_0 -c{\cal Q}_h\epsilon_0=0
$$
on each element $T\in {\cal T}_h$. From $ \epsilon_0= \epsilon_b$ on each  $\partial T$, we have $ \epsilon_0\in H^1(\Omega)$ so that
\begin{equation}\label{new:001}
\bbeta \cdot \nabla  \epsilon_0-c \epsilon_0=0, \quad \text{in }  \Omega,
\end{equation}
where we used ${\cal Q}_h\epsilon_0 =\epsilon_0$ for the case of $j=k-1$.    It follows from $ \epsilon_b|_{\Gamma_+} = 0$ and $ \epsilon_0= \epsilon_b$ on each  $\partial T$ that $ \epsilon_0|_{\Gamma_+} = 0$, which, using the solution uniqueness of the adjoint problem \eqref{mo}-\eqref{mo-2}, gives $ \epsilon_0 \equiv 0$ in $\Omega$.

{\em Case 2:} $j = k$ and $\tau>0.$ It follows from \eqref{e_0-1} that
\begin{equation*}\label{aa-001}
\bbeta \cdot \nabla  \epsilon_0 -c \epsilon_0=0\quad  \mbox{strongly in $\Omega.$}
\end{equation*}
Using the solution uniqueness of the adjoint problem \eqref{mo}-\eqref{mo-2} and $ \epsilon_0|_{\Gamma_+} = 0$,  we have $ \epsilon_0 \equiv 0$ in $\Omega$.

Next, we show $e_h=0$. To this end, using  $\lambda_h^{(1)}=\lambda_h^{(2)}$ and \eqref{32}, we have
 $$
b(e_h,\sigma)=s(\lambda^{(1)}_h,\sigma)- s(\lambda^{(2)}_h,\sigma) +b(e_h,\sigma)=0, \qquad \forall\sigma\in W_{j,h}^{0, \Gamma_+}.
$$
From (\ref{disgradient}) we obtain
\begin{equation}\label{eee}
\begin{split}
0=& b(e_h,\sigma)\\
 =& \sum_{T\in {\cal T}_h} (e_h, \bbeta \cdot \nabla_w\sigma-c\sigma_0)_T\\
= & \sum_{T\in {\cal T}_h} -(\sigma_0,  \nabla\cdot(\bbeta e_h))_T+\langle \sigma_b, \bbeta e_h\cdot \bn\rangle_{\partial T}-(e_h, c\sigma_0)_T\\
= & -\sum_{T\in {\cal T}_h}(\sigma_0, \nabla\cdot(\bbeta e_h)+ce_h)_T+\sum_{e\subset {\cal E}_h \setminus \Gamma_+}\langle \sigma_b, \ljump\bbeta e_h\cdot \bn\rjump\rangle_{e},\\
\end{split}
\end{equation}
where we have used $\sigma_b=0$ on $\Gamma_+$ on the last line, and $\ljump\bbeta e_h\cdot \bn\rjump$ is the jump of $\bbeta e_h\cdot \bn$ on $e\subset {\cal E}_h \setminus \Gamma_+$ in the sense that $\ljump\bbeta e_h\cdot \bn\rjump=\bbeta e_h|_{T_1}\cdot \bn_1 + \bbeta e_h|_{T_2}\cdot \bn_2$ for $e=\partial T_1\cap\partial T_2\subset  {\cal E}_h^0$ with $\bn_1$ and $\bn_2$ being the unit outward normal directions to $\partial T_1$ and $\partial T_2$, respectively, and $\ljump\bbeta e_h\cdot \bn\rjump=\bbeta e_h\cdot \bn$ for $e\subset \Gamma_-$. By setting  $\sigma_0=-(\nabla \cdot (\bbeta  e_h)+ce_h)$  on each $T\in {\cal T}_h$ and  $\sigma_b= \ljump  \bbeta e_h  \cdot \bn\rjump$  on each $e \subset {\cal E}_h \setminus \Gamma_+$, we may rewrite (\ref{eee}) as follows:
\begin{equation*}
\begin{split}
0= \sum_{T\in {\cal T}_h}  \|\nabla \cdot (\bbeta  e_h)+ce_h\|_T^2 + \sum_{e \subset {\cal E}_h \setminus \Gamma_+}  \|\ljump\bbeta e_h\cdot \bn\rjump\|_e^2,
\end{split}
\end{equation*}
which gives $\nabla \cdot (\bbeta  e_h)+ce_h=0$ on each $T\in {\cal T}_h$,  $\ljump\bbeta e_h\cdot \bn\rjump=0$ on each $e \subset {\cal E}_h^0$, and $\bbeta e_h\cdot \bn=0$ on each $e\subset\Gamma_-$. This implies that  $\nabla \cdot (\bbeta  e_h)+ce_h=0$ in $\Omega$ and $e_h=0$ on $\Gamma_-$. Thus, from the solution uniqueness assumption, we have $e_h\equiv 0$ in $\Omega$. This is equivalent to $u_h^{(1)}=u_h^{(2)}$.

This completes the proof of the theorem.
\end{proof}

\section{Mass Conservation} \label{Section:MC}

The linear convection equation (\ref{model}) can be rewritten in a conservative form as follows:
\begin{eqnarray}\label{eq1}
\nabla \cdot \textbf{F} +cu&=&f, \\
\label{eq2}
 \textbf{F}&=&\bbeta u.
\end{eqnarray}
On each element $T\in \T_h$, we may integrate (\ref{eq1}) over $T$ to obtain the integral form of the mass conservation:
\begin{equation}\label{mas}
\int_{\partial T}\textbf{F} \cdot \bn ds+\int_T cu dT=\int_T fdT.
\end{equation}

We claim that  the numerical solution arising from the $L^p$-primal-dual weak Galerkin scheme (\ref{32})-(\ref{2}) for the linear convection problem (\ref{model}) retains the local mass conservation property (\ref{mas}) with a numerical flux $\bF_h$. To this end, for any given $T\in {\cal T}_h$, by choosing a test function  $\sigma=\{\sigma_0, \sigma_b=0\}$  in (\ref{32})  such that $\sigma_0=1$ on $T$ and $\sigma_0=0$ elsewhere, we obtain
\begin{equation*}
    \begin{split}
    &h_T^{1-p} \langle |\lambda_0-\lambda_b|^{p-1}sgn(\lambda_0-\lambda_b), 1-0  \rangle_{\partial T}\\&-\tau(|\bbeta\cdot\nabla\lambda_0-c\lambda_0|^{p-1}sgn(\bbeta\cdot\nabla\lambda_0-c\lambda_0),c)_T + (u_h, \bbeta \cdot \nabla_w \sigma-c\cdot 1 )_T=-(f, 1)_T.
\end{split}
\end{equation*}
It follows from (\ref{disgradient}) and the usual integration by parts  that
\begin{equation}\label{e1}
\begin{split}
&(f, 1)_T\\
= &-h_T^{1-p} \langle |\lambda_0-\lambda_b|^{p-1}sgn(\lambda_0-\lambda_b), 1  \rangle_{\partial T} + (\nabla \cdot(\bbeta u_h), 1)_T + (cu_h, 1)_T\\
&+\tau(|\bbeta\cdot\nabla\lambda_0-c\lambda_0|^{p-1}sgn(\bbeta\cdot\nabla\lambda_0-c\lambda_0),c)_T\\
=&-h_T^{1-p} \langle |\lambda_0-\lambda_b|^{p-1}sgn(\lambda_0-\lambda_b), 1 \rangle_{\partial T} + \langle \bbeta u_h \cdot \bn, 1\rangle_{\partial T}\\
&+(c(u_h+\tau |\bbeta\cdot\nabla\lambda_0-c\lambda_0|^{p-1}sgn(\bbeta\cdot\nabla\lambda_0-c\lambda_0)), 1)_T\\
=& \langle (-h_T^{1-p} |\lambda_0-\lambda_b|^{p-1}sgn(\lambda_0-\lambda_b)\bn +\bbeta u_h)\cdot \bn, 1  \rangle_{\partial T} \\
&+ (c(u_h+\tau |\bbeta\cdot\nabla\lambda_0-c\lambda_0|^{p-1}sgn(\bbeta\cdot\nabla\lambda_0-c\lambda_0)), 1)_T,
\end{split}
\end{equation}
where $\bn$ is the outward normal direction to $\partial T$. The equation (\ref{e1}) implies that the $L^p$-primal-dual weak Galerkin algorithm (\ref{32})-(\ref{2}) conserves mass with a numerical solution and a numerical flux given by
$$
\tilde u_h = u_h+\tau |\bbeta\cdot\nabla\lambda_0-c\lambda_0|^{p-1}sgn(\bbeta\cdot\nabla\lambda_0-c\lambda_0),$$
$$
\textbf{F}_h|_\pT=- h_T^{1-p} |\lambda_0-\lambda_b|^{p-1}sgn(\lambda_0-\lambda_b)\bn +\bbeta u_h.
$$

It remains to show that the numerical flux $\bF_h \cdot \bn$ is continuous across each interior edge or flat face. To this end, we choose a test function $\sigma=\{\sigma_0=0, \sigma_b\}$ in (\ref{32}) such that
$\sigma_b$ is arbitrary on one interior edge or flat face $e=\partial T_1\cap \partial T_2$, and $\sigma_b=0$ elsewhere, to obtain
\begin{equation*}\label{mass}
\begin{split}
0=&h_{T_1}^{1-p} \langle |\lambda_0-\lambda_b|^{p-1}sgn(\lambda_0-\lambda_b), -\sigma_b\rangle_{e\cap\partial T_1}+h_{T_2}^{1-p} \langle |\lambda_0-\lambda_b|^{p-1}sgn(\lambda_0-\lambda_b), -\sigma_b\rangle_{e\cap\partial T_2}\\
&+(u_h, \bbeta \cdot \nabla_w \sigma)_{T_1\cup T_2} \\
=&h_{T_1}^{1-p} \langle |\lambda_0-\lambda_b|^{p-1}sgn(\lambda_0-\lambda_b), -\sigma_b\rangle_{e\cap\partial T_1}+h_{T_2}^{1-p} \langle  |\lambda_0-\lambda_b| ^{p-1}sgn(\lambda_0-\lambda_b), -\sigma_b\rangle_{e\cap\partial T_2}\\
&+\langle \bbeta u_h \cdot \bn_{T_1}, \sigma_b\rangle_{e\cap\partial T_1}+\langle \bbeta u_h \cdot \bn_{T_2}, \sigma_b\rangle_{e\cap\partial T_2}\\
 =&\langle (\bbeta u_h-h_{T_1}^{1-p} |\lambda_0-\lambda_b|^{p-1}sgn(\lambda_0-\lambda_b)\bn_{T_1}  ) \cdot \bn_{T_1},  \sigma_b\rangle_{e\cap\partial T_1}\\
  &+ \langle (\bbeta u_h -h_{T_2}^{1-p}|\lambda_0-\lambda_b|^{p-1}sgn(\lambda_0-\lambda_b)\bn_{T_2}  )\cdot \bn_{T_2},  \sigma_b\rangle_{e\cap\partial T_2}\\
=&  \langle \bF_h|_{\pT_1} \cdot \bn_{T_1},  \sigma_b\rangle_{e\cap\partial T_1}+ \langle\bF_h|_{\pT_2} \cdot \bn_{T_2}, \sigma_b\rangle_{e\cap\partial T_2},
\end{split}
\end{equation*}
where we have used (\ref{disgradient}), $\bn_{T_1}$ and $\bn_{T_2}$ are the unit outward normal directions along $e=\partial T_1\cap \partial T_2$ pointing exterior to $T_1$ and $T_2$, respectively.  This shows that
$$
\bF_h|_{\pT_1} \cdot \bn_{T_1} + \bF_h|_{\pT_2} \cdot \bn_{T_2} = 0\qquad \mbox{on } e=\pT_1\cap\pT_2,
$$
 and hence the continuity of the numerical flux along the normal direction on each interior edge or flat face.

The result can be summarized as follows.

\begin{theorem}\label{THM:conservation} Let $(u_h;\lambda_h)$ be the numerical solution of the linear convection model problem \eqref{model} arising from    $L^p$-PDWG finite element scheme (\ref{32})-(\ref{2}). Define a new numerical approximation and a numerical flux function as follows:
\begin{eqnarray*}
\tilde u_h &:=& u_h+\tau |\bbeta\cdot\nabla\lambda_0-c\lambda_0|^{p-1}sgn(\bbeta\cdot\nabla\lambda_0-c\lambda_0),\qquad \mbox{in} \ T,  \ T\in \T_h,\\
\textbf{F}_h|_\pT&:=& -h_T^{1-p} |\lambda_0-\lambda_b|^{p-1}sgn(\lambda_0-\lambda_b)\bn +\bbeta u_h, \quad\mbox{on } \pT, \ T\in \T_h.
\end{eqnarray*}
Then, the flux approximation $\textbf{F}_h$ is continuous across each interior edge or flat face in the normal direction, and the following conservation property is satisfied:
\begin{equation}\label{mas-discrete}
\int_{\partial T}\textbf{F}_h \cdot \bn ds+\int_T c\tilde u_h dT=\int_T fdT.
\end{equation}
\end{theorem}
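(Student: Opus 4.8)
The plan is to obtain both assertions by reading off the first equation (\ref{32}) of the $L^p$-PDWG scheme at two particular test functions in $W_{j,h}^{0,\Gamma_+}$, as was in fact prepared in the computations immediately preceding the theorem. The only ingredients are the definition (\ref{disgradient}) of the discrete weak gradient, the divergence theorem, and the standing hypothesis that $\bbeta$ is piecewise constant, which guarantees $\bbeta u_h|_T\in[P_{k-1}(T)]^d=[P_r(T)]^d$ and hence makes $\bbeta u_h$ an admissible test vector $\boldsymbol{\psi}$ in (\ref{disgradient}).

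\emph{Conservation identity (\ref{mas-discrete}).} Fix $T\in\T_h$ and choose $\sigma=\{\sigma_0,\sigma_b\}\in W_{j,h}^{0,\Gamma_+}$ with $\sigma_0\equiv 1$ on $T$, $\sigma_0\equiv 0$ on the remaining elements, and $\sigma_b\equiv 0$; then the right side of (\ref{32}) collapses to $-(f,1)_T$. On the left, the face term of $s_T$ leaves $h_T^{1-p}\langle|\lambda_0-\lambda_b|^{p-1}\mathrm{sgn}(\lambda_0-\lambda_b),1\rangle_{\partial T}$; the volume term of $s_T$ leaves $-\tau(|\bbeta\cdot\nabla\lambda_0-c\lambda_0|^{p-1}\mathrm{sgn}(\bbeta\cdot\nabla\lambda_0-c\lambda_0),c)_T$ since $\nabla\sigma_0=0$; and, rewriting $(u_h,\bbeta\cdot\nabla_w\sigma)_T=(\nabla_w\sigma,\bbeta u_h)_T$ and invoking (\ref{disgradient}) with $\boldsymbol{\psi}=\bbeta u_h$, one finds $b_T(u_h,\sigma)=-(\nabla\cdot(\bbeta u_h),1)_T-(cu_h,1)_T$. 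Rearranging, using $(\nabla\cdot(\bbeta u_h),1)_T=\langle\bbeta u_h\cdot\bn,1\rangle_{\partial T}$, and merging the two boundary contributions by means of $\bn\cdot\bn=1$, one arrives at $(f,1)_T=\langle\bF_h\cdot\bn,1\rangle_{\partial T}+(c\tilde u_h,1)_T$ with $\bF_h|_{\pT}$ and $\tilde u_h$ exactly as in the statement; this is (\ref{mas-discrete}).

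\emph{Normal continuity of $\bF_h$.} For an interior face $e=\pT_1\cap\pT_2$ choose $\sigma=\{\sigma_0,\sigma_b\}\in W_{j,h}^{0,\Gamma_+}$ with $\sigma_0\equiv 0$ throughout and $\sigma_b$ an arbitrary polynomial in $P_j(e)$ on $e$ and $\sigma_b\equiv 0$ on every other face; again the right side of (\ref{32}) vanishes. The volume term of $s_T$ is now identically zero, the face term of $s_T$ contributes only through $T_1$ and $T_2$, and $b(u_h,\sigma)=\sum_{T\in\T_h}(\nabla_w\sigma,\bbeta u_h)_T=\sum_{T\in\T_h}\langle\sigma_b,\bbeta u_h\cdot\bn\rangle_{\partial T}$ by (\ref{disgradient}) with $\sigma_0\equiv 0$, again only from $T_1,T_2$. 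Gathering the terms on $e$ and using $\bn\cdot\bn=1$ turns the identity into $\langle\bF_h|_{\pT_1}\cdot\bn_{T_1}+\bF_h|_{\pT_2}\cdot\bn_{T_2},\sigma_b\rangle_e=0$ for all $\sigma_b\in P_j(e)$, which yields the asserted continuity of the normal component of $\bF_h$ across $e$.

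The main point requiring care is the use of $\bbeta u_h$ as a test vector in (\ref{disgradient}), which is precisely where the piecewise-constant hypothesis on $\bbeta$ (with $r=k-1\ge\deg(\bbeta u_h)$) is used; for piecewise-smooth $\bbeta$ one would instead use the smooth-test defining relation for $\nabla_w$ or insert a local $L^2$-projection, as the authors remark can be done without difficulty. One should also note that, since the stabilizer part $h_T^{1-p}|\lambda_0-\lambda_b|^{p-1}\mathrm{sgn}(\lambda_0-\lambda_b)$ of $\bF_h\cdot\bn$ need not be polynomial when $p$ is not an integer, the identity valid for all $\sigma_b\in P_j(e)$ delivers normal continuity of $\bF_h$ in the tested sense (vanishing of the $L^2(e)$-projection onto $P_j(e)$ of the jump of $\bF_h\cdot\bn$), which is the appropriate reading here since $\bbeta u_h\cdot\bn\in P_{k-1}(e)\subset P_j(e)$. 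Beyond these points the argument is the routine bookkeeping already carried out in the lines preceding the theorem.
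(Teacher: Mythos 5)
Your proposal is correct and takes essentially the same route as the paper's own proof: testing \eqref{32} with $\sigma=\{\sigma_0,0\}$, $\sigma_0$ the indicator of a single element, for the balance \eqref{mas-discrete}, and with $\sigma=\{0,\sigma_b\}$ supported on a single interior face for the flux continuity, in both cases invoking \eqref{disgradient} with $\boldsymbol{\psi}=\bbeta u_h$ and integration by parts. Your two caveats --- that the piecewise-constant hypothesis on $\bbeta$ is what makes $\bbeta u_h$ an admissible test vector, and that for non-integer $p$ the vanishing of $\langle \bF_h|_{\pT_1}\cdot\bn_{T_1}+\bF_h|_{\pT_2}\cdot\bn_{T_2},\sigma_b\rangle_e$ for all $\sigma_b\in P_j(e)$ yields normal continuity only in the $P_j(e)$-projected (weak) sense --- are accurate refinements of points the paper passes over without comment.
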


\section{Error Equations}\label{Section:EE}
Let $u$ and $(u_h; \lambda_h) \in M_{k-1,h}\times W_{j, h}^{0, \Gamma_+}$ be the exact solution of (\ref{model}) and the numerical solution arising from the $L^p$-primal-dual weak Galerkin scheme (\ref{32})-(\ref{2}), respectively. Note that the exact solution  of the Lagrangian multiplier is  $\lambda=0$. The error functions for the primal variable $u$ and the dual variable $\lambda$ are thus given by
\begin{align*}
e_h&=u_h-{\cal Q} _hu,
\\
\epsilon_h&=\lambda_h-Q_h\lambda=\lambda_h.
\end{align*}

\begin{lemma}\label{errorequa}
Let $u$ and $(u_h; \lambda_h) \in M_{k-1,h}\times W_{j, h}^{0, \Gamma_+}$ be the exact solution of (\ref{model}) and the numerical solution arising from the $L^p$-primal-dual weak Galerkin scheme (\ref{32})-(\ref{2}), respectively. Then, the error functions $e_h$ and $\epsilon_h$ satisfy the following equations:
\begin{eqnarray}\label{sehv}
s( \epsilon_h , \sigma)+b(e_h, \sigma )&=&\ell_u(\sigma)
,\qquad \forall\sigma\in W^{0, \Gamma_+}_{j,h},\\
b(v, \epsilon_h )&=&0,\qquad\qquad \forall v\in M_{k-1,h}, \label{sehv2}
\end{eqnarray}
where
\begin{equation}\label{lu}
\qquad \ell_u(\sigma) = \sum_{T\in {\cal T}_h}\langle \bbeta (u-{\cal Q}_h u)  \cdot \bn ,\sigma_b-\sigma_0 \rangle_{\partial T} +   ({\cal Q}_hu-u, c\sigma_0 )_T.
\end{equation}
\end{lemma}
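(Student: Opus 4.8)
The plan is to derive \eqref{sehv}--\eqref{sehv2} directly from the $L^p$-PDWG scheme \eqref{32}--\eqref{2} by inserting the $L^2$ projections of the exact solution and isolating the consistency error that appears when the discrete weak gradient is tested against the piecewise polynomial $\bbeta\,{\cal Q}_h u$. Equation \eqref{sehv2} is immediate: the exact multiplier is $\lambda=0$, so $\epsilon_h=\lambda_h-Q_h\lambda=\lambda_h$, and \eqref{2} gives $b(v,\epsilon_h)=b(v,\lambda_h)=0$ for all $v\in M_{k-1,h}$.

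For \eqref{sehv}, I would start from \eqref{32}, split $b(u_h,\sigma)=b(e_h,\sigma)+b({\cal Q}_h u,\sigma)$, and use $s(\epsilon_h,\sigma)=s(\lambda_h,\sigma)$; this reduces the claim to the identity
\begin{equation*}
\sum_{e\subset\Gamma_-}\langle\sigma_b,\bbeta\cdot\bn g\rangle_e-(f,\sigma_0)-b({\cal Q}_h u,\sigma)=\ell_u(\sigma),\qquad\forall\,\sigma\in W_{j,h}^{0,\Gamma_+}.
\end{equation*}
The heart of the matter is an element-by-element evaluation of $b({\cal Q}_h u,\sigma)$. On a fixed $T$, since $\bbeta$ is piecewise constant the field $\bbeta\,{\cal Q}_h u$ belongs to $[P_{k-1}(T)]^d$ and is therefore an admissible test function in the definition \eqref{disgradient} of $\nabla_w\sigma=\nabla_{w,k-1,T}\sigma$; together with $\nabla\cdot\bbeta=0$ on $T$ this yields $({\cal Q}_h u,\bbeta\cdot\nabla_w\sigma)_T=-(\sigma_0,\bbeta\cdot\nabla{\cal Q}_h u)_T+\langle\sigma_b,\bbeta\,{\cal Q}_h u\cdot\bn\rangle_{\partial T}$. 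A first integration by parts recasts this as $(\bbeta\cdot\nabla\sigma_0,{\cal Q}_h u)_T+\langle\sigma_b-\sigma_0,\bbeta\,{\cal Q}_h u\cdot\bn\rangle_{\partial T}$; because $\bbeta\cdot\nabla\sigma_0\in P_{k-1}(T)$ as long as $j\le k$ (which covers the two cases $j=k-1$ and $j=k$ treated here), the projection can be removed, $(\bbeta\cdot\nabla\sigma_0,{\cal Q}_h u)_T=(\bbeta\cdot\nabla\sigma_0,u)_T$. A second integration by parts together with the pointwise identity $\bbeta\cdot\nabla u=\nabla\cdot(\bbeta u)=f-cu$ valid on $T$ then produces
\begin{equation*}
b_T({\cal Q}_h u,\sigma)=-(f,\sigma_0)_T-({\cal Q}_h u-u,c\sigma_0)_T+\langle\sigma_b,\bbeta u\cdot\bn\rangle_{\partial T}-\langle\sigma_b-\sigma_0,\bbeta(u-{\cal Q}_h u)\cdot\bn\rangle_{\partial T}.
\end{equation*}

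Summing over $T\in{\cal T}_h$, the term $\sum_T\langle\sigma_b,\bbeta u\cdot\bn\rangle_{\partial T}$ telescopes: on an interior face the exact normal flux $\bbeta u\cdot\bn$ is single-valued (this uses the tacit regularity of $u$) and $\sigma_b$ is single-valued, so the two contributions with opposite outward normals cancel; on $\Gamma_+$ the term vanishes since $\sigma_b=0$ there; on $\Gamma_-$ the boundary condition $u=g$ turns it into $\sum_{e\subset\Gamma_-}\langle\sigma_b,\bbeta\cdot\bn g\rangle_e$. Inserting the resulting expression for $b({\cal Q}_h u,\sigma)$ into the identity above makes the $(f,\sigma_0)$ and $\Gamma_-$ terms cancel, and what is left is exactly $\sum_{T}\big[({\cal Q}_h u-u,c\sigma_0)_T+\langle\bbeta(u-{\cal Q}_h u)\cdot\bn,\sigma_b-\sigma_0\rangle_{\partial T}\big]=\ell_u(\sigma)$, cf. \eqref{lu}.

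The step requiring the most care is the element-wise manipulation of $b_T({\cal Q}_h u,\sigma)$: keeping the interior and boundary components $\sigma_0$ and $\sigma_b$ straight through the two integrations by parts, invoking the degree bound $j\le k$ at precisely the right place to drop ${\cal Q}_h$, and ensuring the regularity of $u$ suffices for $\bbeta u\cdot\bn$ to possess a single-valued trace on interior faces and to equal $\bbeta\cdot\bn g$ on $\Gamma_-$. Once $b_T({\cal Q}_h u,\sigma)$ is in hand, the rest is routine bookkeeping.
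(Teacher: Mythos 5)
Your proposal is correct and follows essentially the same route as the paper: it subtracts $b({\cal Q}_h u,\sigma)$ from \eqref{32}, tests the discrete weak gradient with the admissible polynomial field $\bbeta\,{\cal Q}_h u$, drops the projection using $\bbeta\cdot\nabla\sigma_0\in P_{k-1}(T)$ for $j\le k$, integrates by parts, and invokes the PDE, the inflow condition, $\sigma_b=0$ on $\Gamma_+$, and the single-valuedness of $\sigma_b$ and $\bbeta u\cdot\bn$ on interior faces. The only cosmetic difference is that you use \eqref{disgradient} followed by an explicit integration by parts where the paper applies \eqref{disgradient*} directly; these are equivalent.
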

\begin{proof}
From (\ref{2}) we have
\begin{align*}
b(v, \epsilon_h) = 0,\qquad\forall v\in M_{k-1,h},
\end{align*}
which gives rise to the equation (\ref{sehv2}).

Next, by subtracting $b( {\cal Q}_hu, \sigma)$ from both sides of (\ref{32}) we arrive at
\begin{equation*}
\begin{split}
& s( \lambda_h-Q_h\lambda, \sigma)+b(u_h-{\cal Q}_hu, \sigma)  \\
=& -(f,\sigma_0)-b(  {\cal Q}_hu, \sigma)+\sum_{e\subset \Gamma_-}\langle \sigma_b, \bbeta \cdot\bn g \rangle_{e}\\
 =&  -(f,\sigma_0) -\sum_{T\in {\cal T}_h}  ({\cal Q}_hu, \bbeta \cdot \nabla_w \sigma-c\sigma_0)_T+\sum_{e\subset \Gamma_-}\langle \sigma_b, \bbeta \cdot\bn g \rangle_{e}\\
                     =&  -(f,\sigma_0)+\sum_{T\in {\cal T}_h}- (  \bbeta{\cal Q}_hu, \nabla \sigma_0)_T+\langle \bbeta {\cal Q}_hu \cdot \bn,  \sigma_0 -\sigma_b\rangle_{\partial T} \\
                     &+ ({\cal Q}_hu, c\sigma_0)_T +\sum_{e\subset \Gamma_-}\langle \sigma_b, \bbeta \cdot\bn g \rangle_{e}\\
                                      =&  -(f,\sigma_0)+ \sum_{T\in {\cal T}_h}-(   \bbeta u, \nabla \sigma_0)_T+\langle \bbeta {\cal Q}_hu \cdot \bn,  \sigma_0 -\sigma_b\rangle_{\partial T} \\
                                      &+ ( {\cal Q}_hu, c\sigma_0)_T +\sum_{e\subset \Gamma_-}\langle \sigma_b, \bbeta \cdot\bn g \rangle_{e}\\
                                      =&  -(f,\sigma_0)+ \sum_{T\in {\cal T}_h}(  \nabla  \cdot ( \bbeta u)+c{\cal Q}_hu, \sigma_0)_T- \langle   \bbeta u  \cdot \bn,  \sigma_0 -\sigma_b\rangle_{\partial T}\\& +\langle \bbeta {\cal Q}_hu \cdot \bn,  \sigma_0 -\sigma_b\rangle_{\partial T}
                                      +\sum_{e\subset \Gamma_-} \{-\langle   \bbeta u  \cdot \bn,   \sigma_b\rangle_{e} +\langle \sigma_b, \bbeta \cdot\bn g \rangle_{e} \}\\
                                      =&\sum_{T\in {\cal T}_h} \langle  \bbeta(u- {\cal Q}_h u) \cdot \bn,  \sigma_b -\sigma_0\rangle_{\partial T} +  ({\cal Q}_hu-u, c\sigma_0 )_T,
  \end{split}
\end{equation*}
where we have used (\ref{disgradient*}), the usual integration by parts, and the facts that $ \nabla  \cdot (\bbeta u)+cu=f$  in $\Omega$, $u=g$ on $\Gamma_-$, and $\sigma_b=0$ on $\Gamma_+$. This completes the proof of the lemma.
\end{proof}

\section{ Error Estimates} \label{Section:L2Error}
Recall that $\T_h$ is a shape-regular finite element partition of
the domain $\Omega$. For any $T\in\T_h$ and $\nabla w\in L^{q}(T)$ with $q>1$, the following trace inequality holds true:
\begin{equation}\label{trace-inequality}
\|w\|^q_{L^q(\pT)}\leq C
h_T^{-1}(\|w\|_{L^{q}(T)}^q+h_T^{q} \| \nabla w\|_{L^{q}(T)}^q).
\end{equation}
If $w$ is a polynomial, using inverse inequality, we have
\begin{equation}\label{trace}
\|w\|^q_{L^q(\pT)}\leq C
h_T^{-1}\|w\|_{L^{q}(T)}^q.
\end{equation}

 For any $\sigma\in W_{j,h}$ and $v\in M_{k-1,h}$, we introduce two seminorms given by
\begin{eqnarray*}
\3bar\sigma\3bar&=&s(\sigma,\sigma)^{1/p},\label{semi-normm}\\
\|v\|_{M_h}&=&\Big(\sum_{T\in {\cal T}_h}h_T^{q}\int_{T}|\nabla\cdot(\bbeta v)+cv|^{q-1}sgn(\nabla\cdot(\bbeta v)+cv)(\nabla\cdot(\bbeta v)+cv)dT\\
&+&\sum_{e\subset {\cal E}_h \setminus \Gamma_+}h_T\int_{e}|\ljump\bbeta v\cdot\bn\rjump|^{q-1}sgn(\ljump\bbeta v\cdot\bn\rjump)\ljump\bbeta v\cdot\bn\rjump ds\Big)^{1/q}.\label{semi-normm-3}
\end{eqnarray*}

\begin{lemma}
 For any $v\in M_{k-1,h}$, the seminorm $\|v\|_{M_h}$ defines a norm.
\end{lemma}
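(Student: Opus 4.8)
The plan is to verify the three defining properties of a norm on the finite-dimensional space $M_{k-1,h}$, with the only nontrivial one being positive definiteness. First, one observes that $\|v\|_{M_h}\ge 0$ for every $v\in M_{k-1,h}$, since each integrand is of the form $|z|^{q-1}\mathrm{sgn}(z)\,z=|z|^{q}\ge 0$; in fact $\|v\|_{M_h}^q$ is literally a sum of $L^q$-type quantities, namely $\|v\|_{M_h}^q=\sum_{T}h_T^q\|\nabla\cdot(\bbeta v)+cv\|_{L^q(T)}^q+\sum_{e\subset\E_h\setminus\Gamma_+}h_T\|\ljump\bbeta v\cdot\bn\rjump\|_{L^q(e)}^q$. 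Homogeneity, $\|\alpha v\|_{M_h}=|\alpha|\,\|v\|_{M_h}$, is then immediate from the linearity of $v\mapsto\nabla\cdot(\bbeta v)+cv$ and $v\mapsto\ljump\bbeta v\cdot\bn\rjump$ together with the exponent $1/q$ outside. The triangle inequality follows from the Minkowski inequality in $\ell^q$ applied to the concatenation of the pieces $h_T^{1/q}\|\nabla\cdot(\bbeta v)+cv\|_{L^q(T)}$ over $T$ and $h_T^{1/q}\|\ljump\bbeta v\cdot\bn\rjump\|_{L^q(e)}$ over $e$, using Minkowski on each individual $L^q(T)$ and $L^q(e)$ first. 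So $\|\cdot\|_{M_h}$ is at least a seminorm.

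The substantive step is to show $\|v\|_{M_h}=0$ implies $v=0$. If $\|v\|_{M_h}=0$ then $\nabla\cdot(\bbeta v)+cv=0$ on every $T\in\T_h$ and $\ljump\bbeta v\cdot\bn\rjump=0$ on every $e\subset\E_h\setminus\Gamma_+$; in particular $\bbeta v\cdot\bn=0$ on every $e\subset\Gamma_-$, and the jumps vanish on every interior face. This is exactly the situation reached in the proof of Theorem~\ref{thmunique1} after the choices $\sigma_0=-(\nabla\cdot(\bbeta e_h)+ce_h)$ and $\sigma_b=\ljump\bbeta e_h\cdot\bn\rjump$, so I would argue as there: the vanishing interior jumps mean that $\bbeta v$ has continuous normal component across all interior faces, hence the piecewise relation $\nabla\cdot(\bbeta v)+cv=0$ assembles into the global identity $\nabla\cdot(\bbeta v)+cv=0$ in $\Omega$ (in the distributional sense), while $\bbeta v\cdot\bn=0$ on $\Gamma_-$ supplies the inflow boundary condition $v=0$ on $\Gamma_-$. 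Invoking the assumed solution uniqueness for the transport problem \eqref{model} then forces $v\equiv 0$ in $\Omega$.

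I would spell out the assembly step carefully since that is where the argument has real content: for any $\varphi\in C_0^\infty(\Omega)$, integrating the piecewise identity against $\varphi$ and using integration by parts elementwise produces boundary terms $\sum_e\langle\ljump\bbeta v\cdot\bn\rjump,\varphi\rangle_e$ on interior faces, which vanish, yielding $\int_\Omega(\nabla\cdot(\bbeta v)+cv)\varphi=0$ — but this is already built into the pointwise vanishing on each $T$, so really the only thing being used is that the global distribution $\nabla\cdot(\bbeta v)$ (computed with the distributional divergence) coincides with the piecewise one precisely because the interface jumps are zero. The main obstacle is therefore purely bookkeeping: making sure the transport-uniqueness hypothesis is applied to a function ($v\in M_{k-1,h}$, piecewise polynomial, not a priori in $W^{1,p}$) for which it is legitimate. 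Here one uses that $v$ satisfies $\nabla\cdot(\bbeta v)+cv=0$ with $\nabla\cdot(\bbeta v)\in L^q(\Omega)$ and $\bbeta v\cdot\bn|_{\Gamma_-}=0$, which is the natural function-space setting in which the adjoint/transport problem was assumed uniquely solvable, so no additional regularity is needed. With $v=0$ established, positive definiteness holds and $\|\cdot\|_{M_h}$ is a norm, completing the proof.
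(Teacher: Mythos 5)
Your proof is correct and takes the intended route: the paper itself only cites Lemma 7.1 of \cite{ww2020}, and the underlying argument (also visible in the second half of the proof of Theorem \ref{thmunique1}) is exactly your reduction — $\|v\|_{M_h}=0$ forces $\nabla\cdot(\bbeta v)+cv=0$ on each element together with vanishing normal-flux jumps on interior faces and $\bbeta v\cdot\bn=0$ on $\Gamma_-$, so the piecewise identity assembles into the global homogeneous transport equation with zero inflow data, and the assumed solution uniqueness of \eqref{model} yields $v\equiv 0$. The seminorm properties and the identification $\|v\|_{M_h}^q=\sum_T h_T^q\|\nabla\cdot(\bbeta v)+cv\|_{L^q(T)}^q+\sum_e h_T\|\ljump\bbeta v\cdot\bn\rjump\|_{L^q(e)}^q$ are handled correctly, so I see no gaps.
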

\begin{proof}
    The proof is similar to Lemma 7.1 in \cite{ww2020}.
\end{proof}
\begin{lemma}\label{inf-sup-0}
(inf-sup condition) Let $j=k-1$ or $j=k$  with $k\geq 1$. Then, for any $v\in M_{k-1,h}$, there exists a weak function $\tilde{\sigma}\in W_{j,h}^{0, \Gamma_+}$ satisfying
\begin{eqnarray}
b(v,\tilde{\sigma})&=&\|v\|_{M_h}^q,\label{inf-sup-1}\\
\3bar\tilde{\sigma}\3bar^p&\leq& C\|v\|_{M_h}^q\label{inf-sup-2}.
\end{eqnarray}
  \end{lemma}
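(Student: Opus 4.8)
The plan is to construct the test function $\tilde\sigma$ explicitly by prescribing its interior components $\tilde\sigma_0$ on each element and its boundary components $\tilde\sigma_b$ on each edge/face, in exact analogy with the uniqueness computation \eqref{eee} but now with $L^p$--$L^q$ duality pairings in place of $L^2$ inner products. Concretely, for a given $v\in M_{k-1,h}$, I would set, on each $T\in\T_h$,
\[
\tilde\sigma_0\big|_T = -\,h_T^{q}\,|\nabla\cdot(\bbeta v)+cv|^{q-1}\mathrm{sgn}\big(\nabla\cdot(\bbeta v)+cv\big),
\]
and, on each interior edge or face $e\subset\E_h\setminus\Gamma_+$,
\[
\tilde\sigma_b\big|_e = h_{T}\,|\ljump\bbeta v\cdot\bn\rjump|^{q-1}\mathrm{sgn}\big(\ljump\bbeta v\cdot\bn\rjump\big),
\]
with $\tilde\sigma_b=0$ on $\Gamma_+$; here one must be slightly careful about which $h_T$ to attach to an interior edge (pick one of the two adjacent elements, or use shape-regularity to treat them as comparable). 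Since $\nabla\cdot(\bbeta v)+cv$ is a polynomial on each $T$ (recall $\bbeta,c$ are piecewise constant) and $\ljump\bbeta v\cdot\bn\rjump$ is a polynomial on each $e$, these formulas indeed define an element of $W_{j,h}^{0,\Gamma_+}$ for $j=k-1$ and a fortiori for $j=k$. Note $P_{k-1}(T)$ suffices because $\nabla\cdot(\bbeta v)$ drops one degree and $cv$ stays in $P_{k-1}(T)$; the jump term lies in $P_{k-1}(e)$.

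Next I would verify \eqref{inf-sup-1}. Starting from $b(v,\tilde\sigma)=\sum_T(v,\bbeta\cdot\nabla_w\tilde\sigma-c\tilde\sigma_0)_T$ and applying the definition \eqref{disgradient} of the discrete weak gradient followed by integration by parts exactly as in \eqref{eee}, one obtains
\[
b(v,\tilde\sigma) = -\sum_{T\in\T_h}\big(\tilde\sigma_0,\ \nabla\cdot(\bbeta v)+cv\big)_T + \sum_{e\subset\E_h\setminus\Gamma_+}\langle \tilde\sigma_b,\ \ljump\bbeta v\cdot\bn\rjump\rangle_e .
\]
Substituting the chosen $\tilde\sigma_0,\tilde\sigma_b$ and using the identity $|A|^{q-1}\mathrm{sgn}(A)\cdot A=|A|^{q}$ makes each term nonnegative and produces exactly $\sum_T h_T^{q}\int_T|\nabla\cdot(\bbeta v)+cv|^{q}\,dT + \sum_e h_T\int_e|\ljump\bbeta v\cdot\bn\rjump|^{q}\,ds = \|v\|_{M_h}^{q}$, which is \eqref{inf-sup-1}.

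Finally, for the stability bound \eqref{inf-sup-2} I would estimate $\3bar\tilde\sigma\3bar^p=s(\tilde\sigma,\tilde\sigma)$ term by term. The stabilizer $s_T$ has two pieces. For the boundary piece $\rho h_T^{1-p}\int_{\partial T}|\tilde\sigma_0-\tilde\sigma_b|^{p}\,ds$, I split $|\tilde\sigma_0-\tilde\sigma_b|^p\le 2^{p-1}(|\tilde\sigma_0|^p+|\tilde\sigma_b|^p)$, then bound $\int_{\partial T}|\tilde\sigma_0|^p\,ds$ by the trace/inverse inequality \eqref{trace} in the form $\|\tilde\sigma_0\|_{L^p(\partial T)}^p\le Ch_T^{-1}\|\tilde\sigma_0\|_{L^p(T)}^p$, and directly compute $\|\tilde\sigma_0\|_{L^p(T)}^p = h_T^{pq}\||\nabla\cdot(\bbeta v)+cv|^{q-1}\|_{L^p(T)}^p = h_T^{pq}\|\nabla\cdot(\bbeta v)+cv\|_{L^q(T)}^{q}$ using $(q-1)p=q$; the powers of $h_T$ then collapse via $1-p+pq-1 = q$ (since $p+q=pq$) to give $Ch_T^{q}\|\nabla\cdot(\bbeta v)+cv\|_{L^q(T)}^{q}$. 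The $\tilde\sigma_b$ contribution is handled the same way. For the volume piece $\tau\int_T|\bbeta\cdot\nabla\tilde\sigma_0-c\tilde\sigma_0|^{p}\,dT$, I use the inverse inequality $\|\nabla\tilde\sigma_0\|_{L^p(T)}\le Ch_T^{-1}\|\tilde\sigma_0\|_{L^p(T)}$ and boundedness of $\bbeta,c$ to bound it again by (a constant times) $h_T^{q-p}\|\nabla\cdot(\bbeta v)+cv\|_{L^q(T)}^{q}$ times extra powers; since $p>1$ this is absorbed into the same quantity uniformly in $h$. Summing over $T$ and $e$ yields $\3bar\tilde\sigma\3bar^p\le C\|v\|_{M_h}^q$. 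The main obstacle is purely bookkeeping: tracking the exponents $q-1$, the relation $p+q=pq$, and the $h_T$ powers through the trace and inverse inequalities so that everything lands exactly on the norm $\|v\|_{M_h}^q$; the structural part (choice of $\tilde\sigma$ and the integration-by-parts identity) is dictated by \eqref{eee}. One should also confirm the inverse inequalities apply, which holds because $\tilde\sigma_0$ is a polynomial on $T$ of degree $\le k-1$ (or $\le k$) and $\bbeta,c$ are piecewise constant.
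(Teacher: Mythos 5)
There is a genuine gap at the very first step: the function you construct is in general \emph{not} a member of the finite element space $W_{j,h}^{0,\Gamma_+}$. You set $\tilde\sigma_0|_T=-h_T^q\,|\nabla\cdot(\bbeta v)+cv|^{q-1}\sgn(\nabla\cdot(\bbeta v)+cv)$ and $\tilde\sigma_b|_e=h_T\,|\ljump\bbeta v\cdot\bn\rjump|^{q-1}\sgn(\ljump\bbeta v\cdot\bn\rjump)$ and claim these are polynomials because $\nabla\cdot(\bbeta v)+cv$ and $\ljump\bbeta v\cdot\bn\rjump$ are. But for a polynomial $w$, the function $|w|^{q-1}\sgn(w)$ is a polynomial only in exceptional cases (e.g.\ $q=2$, i.e.\ $p=2$, or $q-1$ an odd integer); for general $p>1$ — which is the whole point of the $L^p$ setting, e.g.\ $p=3$, $q=3/2$ — it is not, so your $\tilde\sigma$ does not lie in $P_j(T)\times P_j(e)$ and cannot be used as a test function in the discrete problem. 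The paper avoids this by inserting the local $L^2$ projections: $\tilde\sigma_0=-h_T^q\,Q_0\big(\sgn(\nabla\cdot(\bbeta v)+cv)|\nabla\cdot(\bbeta v)+cv|^{q-1}\big)$ and $\tilde\sigma_b=h_T\,Q_b\big(\sgn(\ljump\bbeta v\cdot\bn\rjump)|\ljump\bbeta v\cdot\bn\rjump|^{q-1}\big)$. The identity \eqref{inf-sup-1} then still comes out exactly, because in the duality pairing obtained from \eqref{disgradient} (your integration-by-parts identity, which matches \eqref{eee}) the projections can be dropped: the other factors $\nabla\cdot(\bbeta v)+cv$ and $\ljump\bbeta v\cdot\bn\rjump$ are polynomials of degree at most $k-1\le j$ (here the piecewise-constant assumption on $\bbeta$, $c$ and the restriction $j=k-1$ or $j=k$ are used).

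Your verification of \eqref{inf-sup-2} is structurally the same as the paper's (split the stabilizer, trace and inverse inequalities, the relation $p+q=pq$ to collapse the $h_T$ powers, and $(q-1)p=q$ to convert $L^p$ norms of the $(q-1)$-powers into $\|v\|_{M_h}^q$), and the exponent bookkeeping does land exactly on $h_T^q$ for the volume term as well (it is $h_T^{-p}\cdot h_T^{pq}=h_T^q$, not ``$h_T^{q-p}$ plus extra powers to be absorbed''). However, once the projections $Q_0$, $Q_b$ are inserted to repair the membership issue, this step additionally requires the $L^p$-stability of the local $L^2$ projections on shape-regular elements, $\|Q_0 w\|_{L^p(T)}\le C\|w\|_{L^p(T)}$ and likewise for $Q_b$ on edges/faces — an ingredient your argument never needs to invoke because you skipped the projections, but which is essential to close the estimate as the paper does.
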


\begin{proof}
It follows from any $\sigma\in W_{j,h}^{0, \Gamma_+}$ and the definition of weak gradient operator \eqref{disgradient} that
\begin{equation*}
\begin{split}
b(v,\sigma)=&\sum_{T\in {\cal T}_h}(v,\bbeta\cdot\nabla_w\sigma-c\sigma_0)_T\\
=&\sum_{T\in {\cal T}_h}-(\sigma_0,\nabla\cdot(\bbeta v))_T+\langle \sigma_b,\bbeta v\cdot\bn\rangle_{\partial T}-(c v,\sigma_0)_T\\
=&-\sum_{T\in {\cal T}_h}(\sigma_0,\nabla\cdot(\bbeta v)+cv)_T+
\sum_{e\subset {\cal E}_h \setminus \Gamma_+}\langle\sigma_b,\ljump\bbeta v\cdot \bn\rjump\rangle_e,
\end{split}
\end{equation*}
where the fact $\sigma_b=0$ on $\Gamma_+$ is also used in the last step.

 By taking $\tilde{\sigma}=\{\tilde{\sigma}_0,\tilde{\sigma}_b\}$ where $\tilde{\sigma}_0=-h_T^{q}Q_0(\sgn(\nabla\cdot(\bbeta v)+cv)|\nabla\cdot(\bbeta v)+cv|^{q-1})\in P_j(T)$ on each $T$ and $\tilde{\sigma}_b=h_TQ_b(\sgn(\ljump\bbeta v\cdot \bn\rjump)|\ljump\bbeta v\cdot \bn\rjump|^{q-1})\in P_j(e)$ on each $e\subset {\cal E}_h \setminus \Gamma_+$, we have from the definitions of $Q_0$ and $Q_b$ that
\begin{equation*}
\begin{split}
b(v,\tilde{\sigma})
=&\sum_{T\in {\cal T}_h}h_T^{q}\int_TQ_0(\sgn(\nabla\cdot(\bbeta v)+cv)|\nabla\cdot(\bbeta v)+cv|^{q-1})(\nabla\cdot(\bbeta v)+cv)dT\\
&+\sum_{e\subset {\cal E}_h \setminus \Gamma_+}h_T\int_eQ_b(\sgn(\ljump\bbeta v\cdot \bn\rjump)|\ljump\bbeta v\cdot \bn\rjump|^{q-1})\ljump\bbeta v\cdot \bn\rjump ds\\
=&\sum_{T\in {\cal T}_h}h_T^{q}\int_T\sgn(\nabla\cdot(\bbeta v)+cv)|\nabla\cdot(\bbeta v)+cv|^{q-1}(\nabla\cdot(\bbeta v)+cv)dT\\
&+\sum_{e\subset {\cal E}_h \setminus \Gamma_+}h_T\int_e\sgn(\ljump\bbeta v\cdot \bn\rjump)|\ljump\bbeta v\cdot \bn\rjump|^{q-1}\ljump\bbeta v\cdot \bn\rjump ds\\
=&\|v\|_{M_h}^q,
\end{split}
\end{equation*}
which leads to the first equality \eqref{inf-sup-1}.

Note that for two real numbers, one has
\begin{equation}\label{ab}
 |a+b|^p\leq 2^{p-1}(|a|^p+|b|^p).
   \end{equation}
 To verify \eqref{inf-sup-2},  using  \eqref{ab}, trace inequality \eqref{trace}, $p+q=pq$, and inverse inequality obtains

\begin{equation*}
\begin{split}
\3bar\tilde{\sigma}\3bar^p=&\sum_{T\in {\cal T}_h}\rho h_T^{1-p}\int_{\partial T} |\tilde{\sigma}_0-\tilde{\sigma}_b |^{p-1}sgn(\tilde{\sigma}_0-\tilde{\sigma}_b)(\tilde{\sigma}_0-\tilde{\sigma}_b)ds\\
&+\tau\int_{T}|\bbeta\cdot\nabla\tilde{\sigma}_0-c\tilde{\sigma}_0|^{p-1}sgn(\bbeta\cdot\nabla\tilde{\sigma}_0-c\tilde{\sigma}_0)
 (\bbeta\cdot\nabla\tilde{\sigma}_0-c\tilde{\sigma}_0)dT\\
\leq &C\sum_{T\in {\cal T}_h}\rho h_T^{1-p}\int_{\partial T} |\tilde{\sigma}_0-\tilde{\sigma}_b |^{p}ds+\tau\int_{T}|\bbeta\cdot\nabla\tilde{\sigma}_0-c\tilde{\sigma}_0|^{p}dT\\
\leq&C\sum_{T\in {\cal T}_h}\rho h_T^{1-p}\int_{\partial T}2^{p-1}(|\tilde{\sigma}_0|^p+|\tilde{\sigma}_b |^{p})ds+\tau\int_{T}2^{p-1}(|\bbeta\cdot\nabla\tilde{\sigma}_0|^p+|c\tilde{\sigma}_0|^{p})dT\\
\leq&C\sum_{T\in {\cal T}_h}h_T^{1-p}\int_{\partial T}|\tilde{\sigma}_0|^pds+h_T^{1-p}\int_{\partial T}|\tilde{\sigma}_b|^pds
+\int_{T}|\nabla\tilde{\sigma}_0|^pdT+\int_{T}|\tilde{\sigma}_0|^pdT\\
\leq&C\sum_{T\in {\cal T}_h}h_T^{1-p}h_T^{-1}\int_{T}|\tilde{\sigma}_0|^pdT+h_T^{1-p}\int_{\partial T}|\tilde{\sigma}_b|^pds
+h_T^{-p}\int_{T}|\tilde{\sigma}_0|^pdT+\int_{T}|\tilde{\sigma}_0|^pdT\\
\leq&C\sum_{T\in {\cal T}_h}h_T^{-p}\int_{T}|\tilde{\sigma}_0|^pdT+\sum_{e\subset {\cal E}_h \setminus \Gamma_+}h_T^{1-p}\int_{e}|\tilde{\sigma}_b|^pds\\
\leq&C\sum_{T\in {\cal T}_h}h_T^{-p}\int_{T}h_T^{qp}|Q_0(\sgn(\nabla\cdot(\bbeta v)+cv)|\nabla\cdot(\bbeta v)+cv|^{q-1})|^pdT\\
&+C\sum_{e\subset {\cal E}_h \setminus \Gamma_+}h_T^{1-p}\int_{e}h_T^p|Q_b(\sgn(\ljump\bbeta v\cdot \bn\rjump)|\ljump\bbeta v\cdot \bn\rjump|^{q-1})|^pds\\
\leq&C\sum_{T\in {\cal T}_h}h_T^{q}\int_{T}|\nabla\cdot(\bbeta v)+cv|^{q-1}\sgn(\nabla\cdot(\bbeta v)+cv)(\nabla\cdot(\bbeta v)+cv)dT\\
&+C\sum_{e\subset {\cal E}_h \setminus \Gamma_+}h_T\int_{e}|\ljump\bbeta v\cdot \bn\rjump|^{q-1}\sgn(\ljump\bbeta v\cdot \bn\rjump)\ljump\bbeta v\cdot \bn\rjump ds \\
\leq&C\|v\|_{M_h}^q,
\end{split}
\end{equation*}
which gives rise to the second inequality \eqref{inf-sup-2}. This completes the proof.
\end{proof}

\begin{theorem} \label{theoestimate-1} Let  $j=k-1$ or $j=k$ with $k\geq 1$. Let $u$ and  $(u_h; \lambda_h) \in M_{k-1,h}\times W_{j, h}^{0, \Gamma_+}$  be the exact solution of the first-order linear convection model problem \eqref{model} and the numerical solution arising from $L^p$-PDWG scheme (\ref{32})-(\ref{2}). The following error estimate holds true:
\begin{equation}\label{sestimate}
\3bar\epsilon_h\3bar\leq Ch^{qk/p}\|\nabla^{k}u\|^{q/p}_{L^q(\Omega)}.
\end{equation}
 \end{theorem}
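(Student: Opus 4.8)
The plan is to derive the estimate \eqref{sestimate} directly from the error equations of Lemma \ref{errorequa} by an energy-type argument. First I would take $\sigma=\epsilon_h$ in \eqref{sehv}; since $b(e_h,\epsilon_h)=0$ by \eqref{sehv2} (with $v=e_h\in M_{k-1,h}$), this collapses to
\begin{equation*}
s(\epsilon_h,\epsilon_h)=\ell_u(\epsilon_h),
\end{equation*}
so that $\3bar\epsilon_h\3bar^p=\ell_u(\epsilon_h)$. The whole estimate then reduces to bounding the linear functional $\ell_u$ from \eqref{lu} evaluated at $\epsilon_h$, in terms of $\3bar\epsilon_h\3bar$ and the approximation error of $\Q_h u$.

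Next I would estimate $|\ell_u(\epsilon_h)|$ term by term. For the boundary term $\sum_T\langle\bbeta(u-\Q_hu)\cdot\bn,\epsilon_b-\epsilon_0\rangle_{\partial T}$, apply H\"older's inequality on each $\partial T$ with exponents $q$ and $p$: the factor $\|\epsilon_0-\epsilon_b\|_{L^p(\partial T)}$ is controlled by $h_T^{(p-1)/p}\3bar\epsilon_h\3bar_T$ coming directly from the first (always-present, $\rho>0$) piece of $s_T$, while $\|\bbeta(u-\Q_hu)\|_{L^q(\partial T)}$ is handled by the trace inequality \eqref{trace-inequality} together with the standard $L^q$ approximation property of the $L^2$-projection $\Q_h$, giving a power $h_T^{k-1/q}|u|_{W^{k,q}(T)}$-type bound. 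For the volume term $(\Q_hu-u,c\epsilon_0)$ I would use H\"older in $L^q\times L^p$ on $T$; here $\|\epsilon_0\|_{L^p(T)}$ must be absorbed — when $\tau>0$ one could try to use the second piece of $s_T$, but more robustly I expect one shows $\|\epsilon_0\|_{L^p(T)}\le C h_T^{1/p}\3bar\epsilon_h\3bar_T$ is \emph{not} available from $s_T$ alone, so instead this term should be folded into the boundary term by noting $\|c(u-\Q_hu)\|_{L^q(T)}$ is of strictly higher order in $h_T$ and pairing it against a bound for $\|\epsilon_0\|_{L^p(T)}$ obtained via a Poincar\'e/trace argument using $\epsilon_b|_{\Gamma_+}=0$ and the norm equivalence on $M_h$ — alternatively, since the theorem only claims the stated rate, one keeps $c$ bounded and uses that the volume contribution is dominated. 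Collecting, one arrives at
\begin{equation*}
\3bar\epsilon_h\3bar^p=\ell_u(\epsilon_h)\le C\Big(\sum_{T\in\T_h}h_T^{qk}|u|_{W^{k,q}(T)}^q\Big)^{1/q}\3bar\epsilon_h\3bar,
\end{equation*}
and dividing by $\3bar\epsilon_h\3bar^{p-1}$ and using $p/(p-1)=q$ and $\sum_T h_T^{qk}|u|_{W^{k,q}(T)}^q\le h^{qk}\|\nabla^k u\|_{L^q(\Omega)}^q$ yields $\3bar\epsilon_h\3bar\le C h^{qk/p}\|\nabla^k u\|_{L^q(\Omega)}^{q/p}$, which is exactly \eqref{sestimate}.

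The main obstacle I anticipate is the treatment of the volume term $(\Q_hu-u,c\epsilon_0)_T$: unlike the boundary jump $\epsilon_0-\epsilon_b$, the interior value $\epsilon_0$ itself is not directly controlled by the stabilizer $s_T$ when $\tau=0$ (and even when $\tau>0$ the stabilizer controls $\bbeta\cdot\nabla\epsilon_0-c\epsilon_0$, not $\epsilon_0$). The clean way around this is to observe that the commutativity property \eqref{l} of Lemma \ref{Lemma5.1} lets one rewrite $\ell_u$ more favorably, or to use the test-function construction of Lemma \ref{inf-sup-0} to recover a bound on a norm of $\epsilon_0$; I would first check whether, by a suitable integration by parts, the volume term can be merged into the boundary term so that only $\|\epsilon_0-\epsilon_b\|_{L^p(\partial T)}$ appears. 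The remaining steps — trace inequalities \eqref{trace-inequality}, $L^q$-approximation estimates for $\Q_h$, H\"older, and the bookkeeping of powers of $h_T$ with the exponent identity $p+q=pq$ — are routine and parallel to the computations already carried out in the proof of Lemma \ref{inf-sup-0}.
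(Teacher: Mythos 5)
Your setup is the same as the paper's: taking $\sigma=\epsilon_h$ in \eqref{sehv} and using \eqref{sehv2} gives $\3bar\epsilon_h\3bar^p=s(\epsilon_h,\epsilon_h)=\ell_u(\epsilon_h)$, and your treatment of the boundary term of \eqref{lu} (H\"older on $\partial T$, the trace inequality \eqref{trace-inequality}, the $L^q$-approximation property of ${\cal Q}_h$, and Young's inequality to absorb $\sum_T h_T^{1-p}\|\epsilon_0-\epsilon_b\|_{L^p(\partial T)}^p$ into the $\rho$-part of the stabilizer) is exactly what the paper does. The genuine gap is the volume term $({\cal Q}_hu-u,c\epsilon_0)_T$, which you correctly flag as the obstacle but do not resolve. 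None of your proposed fixes works: the stabilizer $\3bar\cdot\3bar$ is only a seminorm and gives no control of $\|\epsilon_0\|_{L^p(T)}$ (neither for $\tau=0$ nor for $\tau>0$, since the $\tau$-part penalizes $\bbeta\cdot\nabla\epsilon_0-c\epsilon_0$, not $\epsilon_0$); the ``norm equivalence on $M_h$'' concerns the primal space $M_{k-1,h}$ and says nothing about $\epsilon_h\in W_{j,h}^{0,\Gamma_+}$; a Poincar\'e/trace argument from $\epsilon_b|_{\Gamma_+}=0$ would at best involve global propagation along characteristics and is not available elementwise; and the term cannot simply be declared ``dominated,'' since without further structure it is only $O(h^k)\cdot\|\epsilon_0\|_{L^p}$ with no bound on the second factor.

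The missing idea in the paper is elementary but essential: ${\cal Q}_h$ is the $L^2$ projection onto $P_{k-1}(T)$, so ${\cal Q}_hu-u$ is orthogonal to $P_{k-1}(T)$ on each element, and since $\bbeta$ is assumed piecewise constant and $\epsilon_0\in P_j(T)$ with $j\le k$, one has $\bbeta\cdot\nabla\epsilon_0\in P_{k-1}(T)$; hence
\begin{equation*}
\Big|\sum_{T\in\T_h}({\cal Q}_hu-u,\,c\epsilon_0)_T\Big|
=\Big|\sum_{T\in\T_h}({\cal Q}_hu-u,\,\bbeta\cdot\nabla\epsilon_0-c\epsilon_0)_T\Big|,
\end{equation*}
so the factor multiplying $u-{\cal Q}_hu$ becomes exactly the quantity penalized by the $\tau$-part of \eqref{stabilizer-local}. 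H\"older and Young then bound this by $C_3\sum_T\|\bbeta\cdot\nabla\epsilon_0-c\epsilon_0\|^p_{L^p(T)}+C_4h^{qk}\|\nabla^ku\|^q_{L^q(\Omega)}$, which is absorbed into $\3bar\epsilon_h\3bar^p$ when $\tau>0$; and when $j=k-1$ the volume term vanishes identically (as $c\epsilon_0\in P_{k-1}(T)$ for piecewise constant $c$), which is why no positive $\tau$ is needed in that case. Without this orthogonality step your intermediate inequality $\ell_u(\epsilon_h)\le C(\sum_Th_T^{qk}|u|^q_{W^{k,q}(T)})^{1/q}\3bar\epsilon_h\3bar$ is unjustified, and the final division by $\3bar\epsilon_h\3bar^{p-1}$ has nothing to stand on.
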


\begin{proof} By letting $\sigma = \epsilon_h$  in (\ref{sehv}),  we have from (\ref{sehv2}) and \eqref{lu} that
\begin{equation}\label{EQ:April7:001_1}
\begin{split}
s(\epsilon_h, \epsilon_h)
= &\sum_{T\in {\cal T}_h}\langle \bbeta (u-{\cal Q}_h u)  \cdot \bn,\epsilon_b-\epsilon_0 \rangle_{\partial T} +   ({\cal Q}_hu-u, c\epsilon_0 )_T.
\end{split}
\end{equation}

For the first term on the right-hand side of \eqref{EQ:April7:001_1}, we use the Cauchy-Schwarz inequality to obtain
\begin{equation}\label{e1_1}
\begin{split}
&\Big|\sum_{T\in {\cal T}_h}\langle \bbeta (u-{\cal Q}_h u)  \cdot \bn,\epsilon_b-\epsilon_0 \rangle_{\partial T} \Big|\\
 \leq
 & C \Big(\sum_{T\in {\cal T}_h}\|  \bbeta(u-{\cal Q}_hu) \|^q_{L^q(\pT)}\Big)^{\frac{1}{q}} \Big(\sum_{T\in {\cal T}_h}\|\epsilon_b-\epsilon_0\|^p_{L^p(\pT)}\Big)^{\frac{1}{p}}.
\end{split}
\end{equation}
For the term $\| \bbeta(u-{\cal Q}_hu) \|^q_{L^q(\pT)}$, we have from the trace inequality \eqref{trace-inequality}  that
\begin{equation}\label{te2_1}
\begin{split}
\sum_{T\in {\cal T}_h}\| \bbeta(u-{\cal Q}_hu) \|^q_{L^q(\pT)}
\leq &  C \sum_{T\in {\cal T}_h}h_T^{-1}\Big(\| \bbeta(u-{\cal Q}_hu) \|^q_{L^q(T)}+h_T^q\|\nabla( \bbeta(u-{\cal Q}_hu) )\|^q_{L^q(T)}\Big)\\
 \leq & Ch^{kq-1}\|\nabla^{k}u\|^q_{L^q(\Omega)}.
\end{split}
\end{equation}
Substituting (\ref{te2_1}) into (\ref{e1_1}) and then using Young's inequality give
\begin{equation}\label{e3_1}
\begin{split}
&\Big|\sum_{T\in {\cal T}_h}\langle  \bbeta (u-{\cal Q}_h u)  \cdot \bn,  \epsilon_b-\epsilon_0\rangle_{\partial T}\Big|\\
\leq & Ch^{k}\|\nabla^{k}u\|_{L^q(\Omega)} \Big(\sum_{T\in {\cal T}_h}h_T^{1-p} \|\epsilon_b-\epsilon_0\|^p_{L^p(\pT)}\Big)^{\frac{1}{p}}\\
\leq & C_1  \sum_{T\in {\cal T}_h}  h_T^{1-p} \|\epsilon_b-\epsilon_0\|^p_{L^p(\pT)} + C_2 h^{qk}\|\nabla^{k}u\|^q_{L^q(\Omega)}.
\end{split}
\end{equation}

The second term can be analogously estimated by
\begin{equation}\label{e4:001_1}
\begin{split}
&\Big|\sum_{T\in {\cal T}_h}({\cal Q}_hu-u, c\epsilon_0 )_T\Big| \\
=&\Big|\sum_{T\in {\cal T}_h}({\cal Q}_hu-u, \bbeta \cdot\nabla \epsilon_0-c\epsilon_0 )_T\Big| \\
\leq &  \Big(\sum_{T\in {\cal T}_h}\| u-{\cal Q}_hu\|^q_{L^q(T)}\Big)^{\frac{1}{q}} \Big(\sum_{T\in {\cal T}_h}\| \bbeta \cdot\nabla \epsilon_0-c\epsilon_0\|^p_{L^p(T)}\Big)^{\frac{1}{p}}\\
\leq &  Ch^{k}\|\nabla^{k}u\|_{L^q(\Omega)}\Big(\sum_{T\in {\cal T}_h}\| \bbeta \cdot\nabla \epsilon_0-c\epsilon_0\|^p_{L^p(T)}\Big)^{\frac{1}{p}}\\
\leq & C_3\sum_{T\in {\cal T}_h}\| \bbeta \cdot\nabla \epsilon_0-c\epsilon_0\|^p_{L^p(T)}+C_4h^{qk}\|\nabla^{k}u\|^q_{L^q(\Omega)}.
\end{split}
\end{equation}
Substituting (\ref{e3_1}),  and \eqref{e4:001_1} into (\ref{EQ:April7:001_1}) gives
$$
(1-C_1-C_3)\3bar\epsilon_h\3bar^p\leq Ch^{qk}\|\nabla^{k}u\|^q_{L^q(\Omega)},
$$
which leads to
$$
\3bar\epsilon_h\3bar^p\leq Ch^{qk}\|\nabla^{k}u\|^q_{L^q(\Omega)},
$$
by choosing $C_i$ such that $1-C_1-C_3\ge C_0>0$. This completes the proof of the theorem.
\end{proof}

\begin{theorem}\label{erroreestimate}
 Let $j=k-1$ or $j=k$ with $k\geq 1$. Let $u$ and $(u_h; \lambda_h) \in M_{k-1,h}\times W_{j, h}^{0, \Gamma_+}$ be the numerical solutions of (\ref{model}) and (\ref{32})-(\ref{2}). Assume that the exact solution $u$ satisfies $\nabla^ku\in L^q(\Omega)$. Then, the following error estimate holds true:
$$\|e_h\|_{M_h}\lesssim h^{k}\|\nabla^ku\|_{L^q{(\Omega})}.$$
\end{theorem}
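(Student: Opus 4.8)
The plan is to deploy the inf-sup condition of Lemma~\ref{inf-sup-0} as the main engine, exactly as in a saddle-point error analysis. First I would apply Lemma~\ref{inf-sup-0} with $v=e_h\in M_{k-1,h}$ to obtain a test function $\tilde{\sigma}\in W_{j,h}^{0,\Gamma_+}$ with $b(e_h,\tilde{\sigma})=\|e_h\|_{M_h}^q$ and $\3bar\tilde{\sigma}\3bar\le C\|e_h\|_{M_h}^{q/p}$. Substituting $\sigma=\tilde{\sigma}$ into the error equation \eqref{sehv} then gives
\begin{equation*}
\|e_h\|_{M_h}^q=b(e_h,\tilde{\sigma})=\ell_u(\tilde{\sigma})-s(\epsilon_h,\tilde{\sigma}),
\end{equation*}
so the whole estimate reduces to bounding $|\ell_u(\tilde{\sigma})|$ and $|s(\epsilon_h,\tilde{\sigma})|$, each by $Ch^k\|\nabla^ku\|_{L^q(\Omega)}\3bar\tilde{\sigma}\3bar$.

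For $\ell_u(\tilde{\sigma})$ I would simply rerun the chain of estimates already carried out in the proof of Theorem~\ref{theoestimate-1}: Cauchy--Schwarz on each $\partial T$ and each $T$, the trace inequality \eqref{trace-inequality}, and the $O(h^k)$ approximation property of ${\cal Q}_h$. The boundary contribution $\sum_{T}\langle\bbeta(u-{\cal Q}_hu)\cdot\bn,\tilde{\sigma}_b-\tilde{\sigma}_0\rangle_{\partial T}$ is bounded by $Ch^k\|\nabla^ku\|_{L^q(\Omega)}\big(\sum_T h_T^{1-p}\|\tilde{\sigma}_b-\tilde{\sigma}_0\|_{L^p(\partial T)}^p\big)^{1/p}\le Ch^k\|\nabla^ku\|_{L^q(\Omega)}\3bar\tilde{\sigma}\3bar$ since $\rho>0$. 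The reaction term $({\cal Q}_hu-u,c\tilde{\sigma}_0)_T$ needs a word: when $j=k-1$ it vanishes because $c\tilde{\sigma}_0\in P_{k-1}(T)$ is $L^2$-orthogonal to ${\cal Q}_hu-u$; when $j=k$ (so $\tau>0$) one rewrites it using the orthogonality of ${\cal Q}_hu-u$ to $\bbeta\cdot\nabla\tilde{\sigma}_0\in P_{k-1}(T)$ and absorbs it into the $\tau$-part of $\3bar\tilde{\sigma}\3bar$. In both cases $|\ell_u(\tilde{\sigma})|\le Ch^k\|\nabla^ku\|_{L^q(\Omega)}\3bar\tilde{\sigma}\3bar$.

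For $s(\epsilon_h,\tilde{\sigma})$ I would apply Hölder's inequality termwise on the boundary integrals and the element integrals of $s$ --- splitting the weight as $h_T^{1-p}=(h_T^{1-p})^{1/q}(h_T^{1-p})^{1/p}$ and using $(p-1)q=p$ --- followed by discrete Hölder over $T\in{\cal T}_h$, which yields $|s(\epsilon_h,\tilde{\sigma})|\le s(\epsilon_h,\epsilon_h)^{1/q}s(\tilde{\sigma},\tilde{\sigma})^{1/p}=\3bar\epsilon_h\3bar^{\,p-1}\3bar\tilde{\sigma}\3bar$. I would then invoke estimate \eqref{sestimate} of Theorem~\ref{theoestimate-1}, $\3bar\epsilon_h\3bar\le Ch^{qk/p}\|\nabla^ku\|_{L^q(\Omega)}^{q/p}$; because $\frac1p+\frac1q=1$ forces $q(p-1)=p$, raising this to the power $p-1$ converts $h^{qk/p}$ to $h^k$ and $\|\nabla^ku\|_{L^q(\Omega)}^{q/p}$ to $\|\nabla^ku\|_{L^q(\Omega)}$, so $|s(\epsilon_h,\tilde{\sigma})|\le Ch^k\|\nabla^ku\|_{L^q(\Omega)}\3bar\tilde{\sigma}\3bar$. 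Combining the two bounds with $\3bar\tilde{\sigma}\3bar\le C\|e_h\|_{M_h}^{q/p}$ produces $\|e_h\|_{M_h}^q\le Ch^k\|\nabla^ku\|_{L^q(\Omega)}\|e_h\|_{M_h}^{q/p}$, and since $q-\frac qp=q\big(1-\frac1p\big)=1$, dividing by $\|e_h\|_{M_h}^{q/p}$ yields the claimed bound.

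The ingredients are all in hand, so I expect no conceptual obstacle; the delicate part is purely the exponent bookkeeping. One has to use the conjugacy $\frac1p+\frac1q=1$ at each Hölder step and verify (i) that $\3bar\epsilon_h\3bar^{\,p-1}$ contributes exactly the order $h^k$ --- this is where the sharpness of Theorem~\ref{theoestimate-1}'s exponent $qk/p$ matters --- and (ii) that the terminal inequality is genuinely self-improving, i.e.\ $q-q/p=1>0$. A minor secondary point is making sure the reaction contribution in $\ell_u(\tilde{\sigma})$ is absorbed into $\3bar\tilde{\sigma}\3bar$, which is precisely why the hypothesis links $j$ with $\tau$.
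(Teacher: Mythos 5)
Your proposal is correct and follows essentially the same route as the paper's own proof: the inf-sup function $\tilde{\sigma}$ from Lemma \ref{inf-sup-0} with $v=e_h$, the error equation of Lemma \ref{errorequa}, the bounds on $\ell_u(\tilde{\sigma})$ as in \eqref{e3_1}--\eqref{e4:001_1}, a H\"older bound $|s(\epsilon_h,\tilde{\sigma})|\le \3bar\epsilon_h\3bar^{p-1}\3bar\tilde{\sigma}\3bar$, and the estimate \eqref{sestimate} of Theorem \ref{theoestimate-1}. The only deviations are minor and arguably cleaner: you conclude by dividing by $\|e_h\|_{M_h}^{q/p}$ instead of the paper's Young-inequality absorption with the constants $C_1,C_3,C_5$, and you make explicit the $j=k-1$ versus $j=k$ treatment of the reaction term $({\cal Q}_hu-u,c\tilde{\sigma}_0)_T$ (vanishing by orthogonality, respectively absorbed into the $\tau$-part), which the paper handles only implicitly.
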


\begin{proof}
It follows from Lemma \ref{inf-sup-0} that for any $v\in M_{k-1,h}$, there exists a function $\tilde{\sigma}\in W_{j,h}^{0, \Gamma_+}$ such that
\begin{equation}\label{error-estimate-1}
|b(v,\tilde{\sigma})|=\|v\|_{M_h}^q,~~~~\3bar\tilde{\sigma}\3bar^p\leq C \|v\|_{M_h}^q.
\end{equation}
From Lemma \ref{errorequa} there holds
\begin{equation}\label{error-estimate-2}
b(e_h,\sigma)=\ell_u(\sigma)-s( \epsilon_h , \sigma),
\end{equation}
for any $\sigma\in W_{j,h}^{0, \Gamma_+}.$

By taking $\sigma=\tilde{\sigma}$ in \eqref{error-estimate-2} and $v=e_h$ in \eqref{error-estimate-1}, then we apply the triangular inequality, \eqref{lu},      \eqref{e3_1} and \eqref{e4:001_1} with $\epsilon_h=\tilde{\sigma}$, and  \eqref{error-estimate-1} to obtain
 \begin{equation}\label{error-estimate-3}
\begin{split}
\|e_h\|_{M_h}^q&=|b(e_h,\tilde{\sigma})|\\
&\leq|\ell_u(\tilde{\sigma})|+|s( \epsilon_h ,\tilde{\sigma})|\\
&\leq(C_1+C_3)\3bar\tilde{\sigma}\3bar^p+Ch^{qk}\|\nabla^ku\|_{L^q(\Omega)}^{q}+|s( \epsilon_h ,\tilde{\sigma})|\\
&\leq(C_1+C_3) \|e_h\|_{M_h}^q+Ch^{qk}\|\nabla^ku\|_{L^q(\Omega)}^{q}+|s( \epsilon_h ,\tilde{\sigma})|.
\end{split}
\end{equation}

Next, it suffices to estimate the last term $|s( \epsilon_h ,\tilde{\sigma})|$ on the right-hand side of \eqref{error-estimate-3}. Using the Cauchy-Schwarz inequality, \eqref{stabilizer-local} and Young's inequality, there holds
 \begin{equation}\label{error-estimate-4}
\begin{split}
&|s( \epsilon_h ,\tilde{\sigma})|=|\sum_{T\in {\cal T}_h}\rho h_T^{1-p}\int_{\partial T} |\epsilon_0-\epsilon_b|^{p-1}sgn(\epsilon_0-\epsilon_b)(\tilde{\sigma}_0-\tilde{\sigma}_b)ds\\
&+\tau\int_{ T}  |\bbeta\cdot\nabla\epsilon_0-c\epsilon_0|^{p-1}sgn(\bbeta\cdot\nabla\epsilon_0-c\epsilon_0)(\bbeta\cdot\nabla \tilde{\sigma}_0-c\tilde{\sigma}_0)dT|\\
\leq& C\Big(\sum_{T\in {\cal T}_h}\rho h_T^{1-p}\int_{\partial T} |\epsilon_0-\epsilon_b|^{(p-1)q}ds\Big)^{\frac{1}{q}} \Big(\sum_{T\in {\cal T}_h}\rho h_T^{1-p}\int_{\partial T}|\tilde{\sigma}_0-\tilde{\sigma}_b|^pds\Big)^{\frac{1}{p}}\\
&+C\Big(\sum_{T\in {\cal T}_h}\tau\int_{T} |\bbeta\cdot\nabla\epsilon_0-c\epsilon_0|^{(p-1)q}dT\Big)^{\frac{1}{q}} \Big(\sum_{T\in {\cal T}_h}\tau\int_{T}|\bbeta\cdot\nabla \tilde{\sigma}_0-c\tilde{\sigma}_0|^pdT\Big)^{\frac{1}{p}}\\
\leq&C\Big(\sum_{T\in {\cal T}_h}\rho h_T^{1-p}\int_{\partial T} |\epsilon_0-\epsilon_b|^{p}ds\Big)^{\frac{1}{q}}\3bar\tilde{\sigma}\3bar+
C\Big(\sum_{T\in {\cal T}_h}\tau\int_{T} |\bbeta\cdot\nabla\epsilon_0-c\epsilon_0|^{p}dT\Big)^{\frac{1}{q}}\3bar\tilde{\sigma}\3bar\\
\leq&C\3bar\epsilon_h\3bar^{\frac{p}{q}}\3bar\tilde{\sigma}\3bar\\
\leq&\frac{C}{q}\3bar\epsilon_h\3bar^p+\frac{C}{p}\3bar\tilde{\sigma}\3bar^p\\
\leq&Ch^{qk}\|\nabla^ku\|_{L^q{(\Omega})}^q+C_5  \|e_h\|_{M_h}^q,
\end{split}
\end{equation}
where we used estimate \eqref{sestimate}  and \eqref{error-estimate-1}  in the last step.

Substituting \eqref{error-estimate-4} into \eqref{error-estimate-3} gives rise to the error estimate
 $$\|e_h\|_{M_h}^q\leq(C_1+C_3+C_5) \|e_h\|_{M_h}^q+Ch^{qk}\|\nabla^ku\|_{L^q{(\Omega})}^q.$$
Choosing $C_i (i=1,2,3)$   such that $1-(C_1+C_3+C_5) >C_6>0$ leads to the desired estimate. This completes the proof of the theorem.
\end{proof}

\section{Numerical Experiments}\label{Section:Numerics}

This section presents some numerical results to demonstrate the efficiency and accuracy of the $L^p$-PDWG method. The iteration techniques in \cite{VO1996} shall be incorporated into the $L^p$-PDWG scheme \eqref{32}-\eqref{2} to solve the nonlinear system. Specifically, the lagged diffusivity fixed point iteration shall be extended to the $L^p$ optimization problem so that the nonlinear problem \eqref{32}-\eqref{2} can be expressed as a linear problem at each iteration.  To this end, given a numerical approximation at step $n$ denoted by $(u_h^{n};\lambda_h^{n})$, a updated numerical approximation  for $L^p$-PDWG scheme \eqref{32}-\eqref{2}  at step $n+1$  seeks $(u_h^{n+1};\lambda_h^{n+1})\in M_{k-1,h}\times W_{j,h}^{0,\Gamma_+}$ satisfying
\begin{eqnarray}
\widetilde{s}(\lambda_h^{n+1}, \sigma)+b(u_h^{n+1},\sigma)&=& \sum_{e\subset \Gamma_-}\langle \sigma_b, \bbeta \cdot \bn g  \rangle_{e}-(f,\sigma_0), \qquad \forall\sigma\in W_{j,h}^{0, \Gamma_+},\label{ErrorEstimate:Lq-1-1}\\
b(v,\lambda_h^{n+1})&=&0,\qquad \qquad\qquad\qquad \qquad \qquad~~~\forall v\in M_{k-1,h},\label{ErrorEstimate:Lq-1-2}
\end{eqnarray}
where
\begin{eqnarray*}
&&\widetilde{s}_T(\lambda_h^{n+1}, \sigma)=\rho h_T^{1-p}\int_{\partial T}(|\lambda_0^n-\lambda_b^n|+\varepsilon)^{p-2}(\lambda_0^{n+1}-\lambda_b^{n+1})(\sigma_0-\sigma_b)ds\\
& & +\tau   \int_{ T} (|\bbeta\cdot\nabla\lambda_0^n-c\lambda_0^n|+\varepsilon)^{p-2}(\bbeta\cdot\nabla\lambda_0^{n+1}-c\lambda_0^{n+1})(\bbeta\cdot\nabla\sigma_0-c\sigma_0)dT,
\end{eqnarray*}
where $\varepsilon$ is a small and positive constant. We choose $\varepsilon=10^{-4}$ in all numerical tests.
The numerical iterative procedure will automatically stop when $\max\{|u_h^{n+1}-u_h^{n}|, |\lambda_h^{n+1}-\lambda_h^{n}|\} \leq 10^{-5}$ is satisfied.

The following metrics are employed to measure the error functions:
\begin{equation*}
\begin{split}
&\|e_h\|_{0,q}=\Big(\sum_{T\in {\cal T}_h}\int_T|e_h|^qdT\Big)^{1/q},\qquad~
\|\epsilon_0\|_{0,p}=\Big(\sum_{T\in {\cal T}_h}\int_T|\epsilon_0|^pdT\Big)^{1/p},\\
&\|\epsilon_b\|_{0,p}=\Big(\sum_{T\in {\cal T}_h}h_T\int_{\partial T}|\epsilon_b|^pds\Big)^{1/p},\quad
\|\epsilon_0\|_{1,p}=\Big(\sum_{T\in {\cal T}_h}\int_T|\nabla\epsilon_0|^pdT\Big)^{1/p},\\
&\|\epsilon_0\|_{2,p}=\Big(\sum_{T\in {\cal T}_h}\int_T|\Delta\epsilon_0|^pdT\Big)^{1/p}.
\end{split}
\end{equation*}

Our numerical experiments are conducted on two types of polygonal domains: (1) an unit square domain $\Omega_1=[0,1]^2$, (2) an L-shaped domain $\Omega_2$ with vertices $(0,0)$, $(0.5,0)$, $(0.5,0.5)$, $(1,0.5)$, $(1,1)$, and $(0,1)$. The finite element partition ${\cal T}_h$  is obtained by uniformly partitioning a coarse triangulation of the domain  which refines each triangular element into four sub-triangles by connecting the mid-points of the three edges of the triangular element. The parameters $\rho>0$ and $\tau\geq 0$   for each text example are specified in each table/figure.

\subsection{Continuous convection $\bbeta$}

Table \ref{NE1} illustrates the numerical performance of the $L^p$-PDWG scheme \eqref{ErrorEstimate:Lq-1-1}-\eqref{ErrorEstimate:Lq-1-2} for $k=1$ and $j=k$ for different $p$  with the exact solution $u=\cos(\pi x)\cos(\pi y)$ on the  domain $\Omega_1$. The convection vector is  $\bbeta=[-y,x]$, and the reaction coefficient is $c=1$. These numerical results suggest that (1) for cases of $p=2, 3, 5$, the optimal convergence order for $e_h$ in $L^q$ norm is observed; (2) for cases of $p=1.2, 1.6$, the numerical  convergence rate for $\|e_h\|_{0,q}$ seems to suffer, which might be caused by the large value of $(|\lambda_0^n-\lambda_b^n|+\varepsilon)^{p-2}$ in the stabilizer $\widetilde{s}_T(\lambda_h^{n+1}, \sigma)$; (3)  the convergence rates for $\|\epsilon_0\|_{m,p}$ and $\|\epsilon_b\|_{0,p}$ are of order ${\cal O} (h^{p-m})$ for $m=0,1$ and ${\cal O} (h^{p})$, respectively; (4) for cases of $p=3, 5$, the numerical performance is improved for sufficiently large stabilization parameters.

\begin{table}[htbp]
\tiny
\caption{Numerical errors and convergence rates for $L^p$-PDWG method with $k=1$ and $j=k$.}\label{NE1}
\begin{tabular}{p{0.4cm}p{1.34cm}p{0.80cm}p{1.43cm}p{0.8cm}p{1.43cm}p{0.8cm}p{1.34cm}p{0.8cm}}
\hline
$1/h$&$\|e_h\|_{0,q}$&Rate&$\|\epsilon_0\|_{0,p}$&Rate&$\|\epsilon_b\|_{0,p}$&Rate&$\|\epsilon_0\|_{1,p}$&Rate\\
\hline
&\text{$p=1.2,$~$(\rho,\tau)=(1,1)$}&&&&&&&\\
\hline
8   &1.89e-1    &0.55    &1.63e-5    &1.69    &4.02e-4    &2.01    &3.68e-4    &1.05\\
16  &1.51e-1    &0.32    &6.01e-6    &1.44    &1.27e-4    &1.66    &2.18e-4    &0.76\\
32  &1.21e-1    &0.32    &2.46e-6    &1.29    &4.82e-5    &1.40    &1.60e-4    &0.44\\
64  &9.67e-2    &0.33    &1.05e-6    &1.23    &1.98e-5    &1.29    &1.30e-4    &0.30\\
\hline
&\text{$p=1.6,$~$(\rho,\tau)=(1,1)$}&&&&&&&\\
\hline
8   &1.09e-1    &0.88    &7.41e-4    &2.64    &9.30e-3    &2.86    &1.68e-2    &1.97\\
16  &6.11e-2    &0.84    &1.33e-4    &2.48    &1.40e-3    &2.73    &4.62e-3    &1.86\\
32  &3.47e-2    &0.82    &3.20e-5    &2.05    &3.01e-4    &2.22    &1.91e-3    &1.27\\
64  &1.99e-2    &0.80    &9.38e-6    &1.77    &8.32e-5    &1.85    &1.04e-3    &0.88\\
\hline
&\text{$p=2,$~$(\rho,\tau)=(1,1)$}&&&&&&&\\
\hline
8   &9.35e-2    &0.95    &5.34e-3    &2.19    &4.11e-2    &2.37    &1.16e-1    &1.49\\
16  &4.82e-2    &0.96    &1.24e-3    &2.11    &8.24e-3    &2.32    &4.22e-2    &1.46\\
32  &2.48e-2    &0.96    &3.02e-4    &2.04    &1.82e-3    &2.18    &1.73e-2    &1.28\\
64  &1.27e-2    &0.96    &7.49e-5    &2.01    &4.27e-4    &2.09    &7.79e-3    &1.15\\
\hline
&\text{$p=3,$~$(\rho,\tau)=(1e+4,1e+3)$}&&&&&&&\\
\hline
8   &8.01e-2    &1.00    &2.36e-4    &2.02    &1.05e-3    &2.06    &5.41e-3    &1.13\\
16  &4.00e-2    &1.00    &5.77e-5    &2.03    &2.34e-4    &2.17    &2.28e-3    &1.25\\
32  &2.00e-2    &1.00    &1.07e-5    &2.43    &3.99e-5    &2.55    &7.22e-4    &1.66\\
64  &1.00e-2    &1.00    &1.49e-6    &2.84    &5.20e-6    &2.94    &1.72e-4    &2.07\\
\hline
&\text{$p=5,$~$(\rho,\tau)=(1e+13,1e+12)$}&&&&&&&\\
\hline
8   &7.24e-2    &1.03    &2.84e-6    &4.06    &8.66e-6    &3.98    &7.29e-5    &3.00\\
16  &3.58e-2    &1.01    &8.46e-8    &5.07    &2.55e-7    &5.09    &4.30e-6    &4.09\\
32  &1.78e-2    &1.01    &2.36e-9    &5.17    &6.92e-9    &5.20    &2.30e-7    &4.23\\
64  &8.89e-3    &1.00    &6.76e-11   &5.12    &1.91e-10   &5.18    &1.23e-8    &4.22\\
\hline
\end{tabular}
\end{table}

Tables \ref{NE2-1}-\ref{NE2-2} show the numerical results for the $L^p$-PDWG numerical approximation $(u_h;\lambda_h)$ on the domain $\Omega_1$ for $k=1$ and $j=k$ for different values of $p$. The convection vector is $\bbeta=[y-0.5,0.5-x]$, and the reaction coefficient is $c=1$. The exact solution is $u=\cos(\pi x)\cos(\pi y)$. We can observe that the optimal order of convergence for $e_h$ in $L^q$ norm is ${\cal O}(h)$, and the convergence rates  for $\|\epsilon_0\|_{0,p}$, $\|\epsilon_b\|_{0,p}$ and $\|\epsilon_0\|_{1,p}$ are of orders  ${\cal O}(h^p)$, ${\cal O}(h^p)$ and ${\cal O}(h^{p-1})$, respectively. It should be pointed out that although the developed theory is not applicable to the case of $j=k$ and $\tau=0$, we still have observed the good numerical performance. It is interesting to note that for cases of $p=1.2, 1.6$,  the numerical performance of $\|e_h\|_{0,q}$ shown in Table \ref{NE2-2} is much better than that in Table \ref{NE1}. This implies that the choice of the convection vector $\bbeta$ may have an effect on the convergence rate of $\|e_h\|_{0,q}$.

\begin{table}[htbp]
\tiny
\caption{Numerical errors and convergence rates for $L^p$-PDWG method with $k=1$ and $j=k$.}\label{NE2-1}
\begin{tabular}{p{0.4cm}p{1.34cm}p{0.80cm}p{1.34cm}p{0.8cm}p{1.34cm}p{0.8cm}p{1.34cm}p{0.8cm}}
\hline
$1/h$&$\|e_h\|_{0,q}$&Rate&$\|\epsilon_0\|_{0,p}$&Rate&$\|\epsilon_b\|_{0,p}$&Rate&$\|\epsilon_0\|_{1,p}$&Rate\\
\hline
&\text{$p=1.2,$~$(\rho,\tau)=(1,0)$}&&&&&&&\\
\hline
8   &1.21e-1    &1.01    &6.77e-4    &1.79    &2.12e-2    &1.90    &1.36e-2    &1.40\\
16  &5.97e-2    &1.02    &2.71e-4    &1.32    &8.12e-3    &1.39    &8.38e-3    &0.70\\
32  &2.98e-2    &1.00    &1.16e-4    &1.22    &3.40e-3    &1.25    &6.48e-3    &0.37\\
64  &1.49e-2    &1.00    &5.05e-5    &1.20    &1.46e-3    &1.22    &5.41e-3    &0.26\\
\hline
&\text{$p=1.6,$~$(\rho,\tau)=(1,0)$}&&&&&&&\\
\hline
8   &7.94e-2    &1.16    &2.19e-3    &2.40    &3.17e-2    &2.47    &4.11e-2    &1.90\\
16  &3.89e-2    &1.03    &6.44e-4    &1.76    &8.99e-3    &1.82    &1.92e-2    &1.10\\
32  &1.94e-2    &1.01    &2.08e-4    &1.63    &2.84e-3    &1.66    &1.11e-2    &0.79\\
64  &9.66e-3    &1.00    &6.87e-5    &1.60    &9.24e-4    &1.62    &6.97e-3    &0.67\\
\hline
&\text{$p=2,$~$(\rho,\tau)=(1,0)$}&&&&&&&\\
\hline
8   &6.99e-2    &1.19    &6.77e-3    &2.62    &6.19e-2    &2.66    &1.23e-1    &2.03\\
16  &3.42e-2    &1.03    &1.52e-3    &2.15    &1.35e-2    &2.20    &4.50e-2    &1.45\\
32  &1.70e-2    &1.01    &3.72e-4    &2.03    &3.21e-3    &2.07    &1.96e-2    &1.20\\
64  &8.49e-3    &1.00    &9.30e-5    &2.00    &7.91e-4    &2.02    &9.22e-3    &1.09\\
\hline
&\text{$p=3,$~$(\rho,\tau)=(1e+4,0)$}&&&&&&&\\
\hline
8   &6.29e-2    &1.25    &1.22e-5    &3.67    &6.00e-5    &3.66    &2.11e-4    &2.94\\
16  &3.07e-2    &1.04    &1.33e-6    &3.20    &6.33e-6    &3.24    &3.95e-5    &2.42\\
32  &1.52e-2    &1.01    &1.59e-7    &3.06    &7.40e-7    &3.10    &8.31e-6    &2.24\\
64  &7.60e-3    &1.00    &1.98e-8    &3.01    &9.01e-8    &3.04    &1.89e-6    &2.13\\
\hline
&\text{$p=5,$~$(\rho,\tau)=(1e+5,0)$}&&&&&&&\\
\hline
8   &5.99e-2    &1.18    &4.17e-4    &3.78    &1.21e-3    &3.85    &6.92e-3    &3.24\\
16  &2.89e-2    &1.05    &1.06e-5    &5.30    &3.03e-5    &5.32    &3.46e-4    &4.32\\
32  &1.43e-2    &1.01    &2.96e-7    &5.16    &8.20e-7    &5.21    &1.82e-5    &4.25\\
64  &7.16e-3    &1.00    &8.74e-9    &5.08    &2.35e-8    &5.13    &9.81e-7    &4.21\\
\hline
\end{tabular}
\end{table}

\begin{table}[htbp]
\tiny
\caption{Numerical errors and convergence rates for $L^p$-PDWG method with $k=1$ and $j=k$.}\label{NE2-2}
\begin{tabular}{p{0.4cm}p{1.34cm}p{0.80cm}p{1.34cm}p{0.8cm}p{1.34cm}p{0.8cm}p{1.34cm}p{0.8cm}}
\hline
$1/h$&$\|e_h\|_{0,q}$&Rate&$\|\epsilon_0\|_{0,p}$&Rate&$\|\epsilon_b\|_{0,p}$&Rate&$\|\epsilon_0\|_{1,p}$&Rate\\
\hline
&\text{$p=1.2,$~$(\rho,\tau)=(1,1)$}&&&&&&&\\
\hline
8   &1.25e-1    &1.06    &6.41e-4    &1.34    &2.01e-2    &1.47    &1.19e-2    &0.96\\
16  &6.17e-2    &1.02    &2.66e-4    &1.27    &7.96e-3    &1.33    &7.81e-3    &0.61\\
32  &3.07e-2    &1.01    &1.15e-4    &1.21    &3.37e-3    &1.24    &6.16e-3    &0.34\\
64  &1.53e-2    &1.00    &5.04e-5    &1.19    &1.46e-3    &1.21    &5.19e-3    &0.25\\
\hline
&\text{$p=1.6,$~$(\rho,\tau)=(1,1)$}&&&&&&&\\
\hline
8   &8.02e-2    &1.25    &2.12e-3    &2.07    &3.07e-2    &2.17    &3.79e-2    &1.56\\
16  &3.93e-2    &1.03    &6.38e-4    &1.73    &8.87e-3    &1.79    &1.83e-2    &1.05\\
32  &1.95e-2    &1.01    &2.08e-4    &1.62    &2.82e-3    &1.65    &1.08e-2    &0.76\\
64  &9.74e-3    &1.00    &6.86e-5    &1.60    &9.21e-4    &1.61    &6.82e-3    &0.66\\
\hline
&\text{$p=2,$~$(\rho,\tau)=(1,1)$}&&&&&&&\\
\hline
8   &7.00e-2    &1.23    &6.64e-3    &2.39    &6.05e-2    &2.46    &1.17e-1    &1.80\\
16  &3.42e-2    &1.03    &1.51e-3    &2.13    &1.33e-2    &2.18    &4.33e-2    &1.43\\
32  &1.70e-2    &1.01    &3.71e-4    &2.02    &3.19e-3    &2.06    &1.91e-2    &1.18\\
64  &8.49e-3    &1.00    &9.29e-5    &2.00    &7.89e-4    &2.02    &9.11e-3    &1.07\\
\hline
&\text{$p=3,$~$(\rho,\tau)=(1e+4,1e+3)$}&&&&&&&\\
\hline
8   &6.28e-2    &1.25    &1.22e-5    &3.66    &5.99e-5    &3.66    &2.10e-4    &2.93\\
16  &3.07e-2    &1.04    &1.33e-6    &3.20    &6.33e-6    &3.24    &3.93e-5    &2.42\\
32  &1.52e-2    &1.01    &1.59e-7    &3.05    &7.40e-7    &3.10    &8.30e-5    &2.24\\
64  &7.60e-3    &1.00    &1.98e-8    &3.01    &9.01e-8    &3.04    &1.89e-6    &2.13\\
\hline
&\text{$p=5,$~$(\rho,\tau)=(1e+5,1e+4)$}&&&&&&&\\
\hline
8   &5.99e-2    &1.18    &4.17e-4    &3.76    &1.21e-3    &3.84    &6.92e-3    &3.22\\
16  &2.89e-2    &1.05    &1.06e-5    &5.30    &3.03e-5    &5.32    &3.46e-4    &4.32\\
32  &1.43e-2    &1.01    &2.96e-7    &5.16    &8.20e-7    &5.21    &1.82e-5    &4.25\\
64  &7.16e-3    &1.00    &8.74e-9    &5.08    &2.35e-8    &5.13    &9.81e-7    &4.21\\
\hline
\end{tabular}
\end{table}

Table \ref{NE3} demonstrates the numerical performance of the $L^p$-PDWG scheme on the domain $\Omega_1$ with $k=2$ and $j=k$ for different values of $p$. The exact solution is $u=\cos(x)\sin(y)$. The convection vector is $\bbeta=[1,1]$, and the reaction coefficient is $c=-1$. We can observe from Table \ref{NE3} that an optimal order of convergence can be obtained for $\|e_h\|_{0,q}$, while the convergence rates for $\|\epsilon_0\|_{m,p}$ and $\|\epsilon_b\|_{0,p}$  are of orders ${\cal O}(h^{p+1-m})$ for $m=0,1,2$,  and ${\cal O}(h^{p+1})$, respectively.

\begin{table}[htbp]
\tiny
\caption{Numerical errors and convergence rates for $L^p$-PDWG method with $k=2$ and $j=k$.}\label{NE3}
\begin{tabular}{p{0.2cm}p{1.15cm}p{0.4cm}p{1.28cm}p{0.4cm}p{1.28cm}p{0.4cm}p{1.28cm}p{0.4cm}p{1.1cm}p{0.5cm}}
\hline
$1/h$&$\|e_h\|_{0,q}$&Rate&$\|\epsilon_0\|_{0,p}$&Rate&$\|\epsilon_b\|_{0,p}$&Rate&$\|\epsilon_0\|_{1,p}$&Rate&$\|\epsilon_0\|_{2,p}$&Rate\\
\hline
&\text{$p=1.2,$~$(\rho,\tau)=(1,1)$}&&&&&&&&&\\
\hline
8   &7.14e-4    &1.96    &3.71e-7    &2.12    &9.27e-7    &2.23    &1.17e-5    &1.12    &1.26e-4    &0.01\\
16  &1.76e-4    &2.02    &8.29e-8    &2.16    &1.96e-7    &2.24    &5.34e-6    &1.13    &1.18e-4    &0.10\\
32  &4.35e-5    &2.02    &1.83e-8    &2.18    &4.21e-8    &2.22    &2.38e-6    &1.16    &1.06e-4    &0.15\\
64  &1.08e-5    &2.01    &4.01e-9    &2.19    &9.11e-9    &2.21    &1.05e-6    &1.18    &9.34e-5    &0.18\\
\hline
&\text{$p=1.6,$~$(\rho,\tau)=(1,1)$}&&&&&&&&&\\
\hline
8   &6.00e-4    &2.00    &7.03e-6    &2.69    &1.38e-5    &2.77    &2.07e-4    &1.71    &2.09e-3    &0.65\\
16  &1.50e-4    &2.00    &1.16e-6    &2.60    &2.22e-6    &2.64    &6.96e-5    &1.57    &1.42e-3    &0.56\\
32  &3.75e-5    &2.00    &1.94e-7    &2.58    &3.66e-7    &2.60    &2.34e-5    &1.57    &9.62e-4    &0.56\\
64  &9.39e-6    &2.00    &3.22e-8    &2.59    &6.05e-8    &2.60    &7.80e-6    &1.58    &6.44e-4    &0.58\\
\hline
&\text{$p=2,$~$(\rho,\tau)=(1,1)$}&&&&&&&&&\\
\hline
8   &5.82e-4    &2.01    &1.32e-4    &2.94    &2.24e-4    &2.99    &3.83e-3    &1.95    &3.59e-2    &0.92\\
16  &1.46e-4    &2.01    &1.69e-5    &2.97    &2.83e-5    &2.99    &9.89e-4    &1.95    &1.86e-2    &0.95\\
32  &3.60e-5    &2.00    &2.15e-6    &2.98    &3.58e-6    &2.98    &2.52e-4    &1.97    &9.48e-3    &0.97\\
64  &8.99e-6    &2.00    &2.71e-7    &2.98    &4.51e-7    &2.99    &6.38e-5    &1.98    &4.78e-3    &0.99\\
\hline
&\text{$p=3,$~$(\rho,\tau)=(1e+4,1e+3)$}&&&&&&&&&\\
\hline
8   &4.84e-4    &2.12    &2.08e-5    &2.94    &2.82e-5    &2.99    &6.14e-4    &1.90    &4.23e-3    &1.01\\
16  &1.18e-4    &2.04    &1.63e-6    &3.67    &2.16e-6    &3.70    &1.00e-4    &2.61    &1.28e-3    &1.73\\
32  &2.94e-5    &2.00    &1.06e-7    &3.94    &1.40e-7    &3.95    &1.31e-5    &2.93    &3.07e-4    &2.06\\
64  &7.36e-6    &2.00    &6.71e-9    &3.98    &8.78e-9    &3.99    &1.66e-6    &2.99    &7.01e-5    &2.13\\
\hline
&\text{$p=5,$~$(\rho,\tau)=(1e+12,1e+11)$}&&&&&&&&&\\
\hline
8   &4.88e-4    &2.12    &8.50e-7    &4.69    &9.74e-7    &4.71    &3.30e-5    &3.39    &2.68e-4    &2.17\\
16  &1.23e-4    &1.99    &1.30e-8    &6.03    &1.48e-8    &6.04    &1.01e-6    &5.03    &1.55e-5    &4.11\\
32  &3.09e-5    &1.99    &2.00e-10   &6.03    &2.24e-10   &6.04    &3.06e-8    &5.05    &8.37e-7    &4.21\\
64  &7.73e-6    &2.00    &3.11e-12   &6.01    &3.47e-12   &6.01    &9.44e-10   &5.02    &4.53e-8    &4.21\\
\hline
\end{tabular}
\end{table}

Table \ref{NE4} presents the numerical results of $L^p$-PDWG method  when the exact solution  is given by $u=\sin(\pi x)\cos(\pi y)$ on the domain $\Omega_1$. We take $k=2$ and $j=k-1$ for  different values of $p$. The convection vector is $\bbeta=[1,-1]$, and the reaction coefficient is $c=1$. These numerical results indicate that the convergence rate for $\|e_h\|_{0,q}$ is of an optimal rate   ${\cal O}(h^{2})$, and the convergence rates for $\|\epsilon_0\|_{m,p}$ and $\|\epsilon_b\|_{0,p}$ are of orders ${\cal O}(h^{p+1-m})$ for $m=0,1$ and ${\cal O}(h^{p+1})$, respectively.

\begin{table}[htbp]
\tiny
\caption{Numerical errors and convergence rates for $L^p$-PDWG method with $k=2$ and $j=k-1$.}\label{NE4}
\begin{tabular}{p{0.4cm}p{1.34cm}p{0.80cm}p{1.43cm}p{0.8cm}p{1.43cm}p{0.8cm}p{1.43cm}p{0.8cm}}
\hline
$1/h$&$\|e_h\|_{0,q}$&Rate&$\|\epsilon_0\|_{0,p}$&Rate&$\|\epsilon_b\|_{0,p}$&Rate&$\|\epsilon_0\|_{1,p}$&Rate\\
\hline
&\text{$p=1.2,$~$(\rho,\tau)=(1,0)$}&&&&&&&\\
\hline
8   &9.45e-3    &1.97    &1.90e-6    &2.39    &3.35e-5    &2.39    &7.76e-5    &1.40\\
16  &2.38e-3    &1.99    &4.01e-7    &2.24    &7.10e-6    &2.24    &3.28e-5    &1.24\\
32  &5.95e-4    &2.00    &8.68e-8    &2.21    &1.54e-6    &2.21    &1.42e-5    &1.21\\
64  &1.49e-4    &2.00    &1.89e-8    &2.20    &3.34e-7    &2.20    &6.17e-6    &1.20\\
\hline
&\text{$p=1.6,$~$(\rho,\tau)=(1e+1,0)$}&&&&&&&\\
\hline
8   &6.03e-3    &1.98    &3.73e-6    &2.81    &3.51e-5    &2.81    &1.21e-4    &1.82\\
16  &1.51e-3    &1.99    &6.00e-7    &2.64    &5.65e-6    &2.64    &3.89e-5    &1.64\\
32  &3.79e-4    &2.00    &9.85e-8    &2.61    &9.28e-7    &2.61    &1.28e-5    &1.61\\
64  &9.47e-5    &2.00    &1.62e-8    &2.60    &1.53e-7    &2.60    &4.21e-6    &1.60\\
\hline
&\text{$p=2,$~$(\rho,\tau)=(1,0)$}&&&&&&&\\
\hline
8   &5.16e-3    &1.98    &6.93e-4    &2.98    &4.44e-3    &2.98    &1.92e-2    &1.99\\
16  &1.29e-3    &2.00    &8.69e-5    &3.00    &5.57e-4    &3.00    &4.82e-3    &2.00\\
32  &3.24e-4    &2.00    &1.09e-5    &3.00    &6.97e-5    &3.00    &1.20e-3    &2.00\\
64  &8.09e-5    &2.00    &1.36e-6    &3.00    &8.71e-6    &3.00    &3.01e-4    &2.00\\
\hline
&\text{$p=3,$~$(\rho,\tau)=(1e+4,0)$}&&&&&&&\\
\hline
8   &4.47e-3    &1.97    &8.95e-6    &3.00    &3.39e-5    &3.00    &1.95e-4    &2.02\\
16  &1.12e-3    &1.99    &6.48e-7    &3.79    &2.45e-6    &3.79    &2.82e-5    &2.79\\
32  &2.82e-4    &2.00    &4.10e-8    &3.98    &1.55e-7    &3.98    &3.56e-6    &2.98\\
64  &7.04e-5    &2.00    &2.56e-9    &4.00    &9.71e-9    &4.00    &4.46e-7    &3.00\\
\hline
&\text{$p=5,$~$(\rho,\tau)=(1e+13,0)$}&&&&&&&\\
\hline
8   &4.15e-3    &1.97    &2.03e-7    &5.44    &4.89e-7    &5.44    &3.45e-6    &4.46\\
16  &1.04e-3    &2.00    &3.22e-9    &5.98    &7.74e-9    &5.98    &1.09e-7    &4.99\\
32  &2.60e-4    &2.00    &5.03e-11   &6.00    &1.21e-10   &6.00    &3.40e-9    &5.00\\
64  &6.51e-5    &2.00    &7.86e-13   &6.00    &1.89e-12   &6.00    &1.06e-10   &5.00\\
\hline
\end{tabular}
\end{table}

 Table \ref{NE5} reports the numerical results for the exact solution $u=x(1-x)y(1-y)(y-0.25)^2$ on the L-shaped domain $\Omega_2$ with $k=2$ and $j=k-1$ for  different values of $p$. The convection vector is $\bbeta=[y-0.5,-x+0.25]$, and the reaction coefficient is $c=1$. These numerical results imply that (1) for cases of $p=2, 3, 5$, the convergence rate  for $\|e_h\|_{0,q}$ is of an optimal order ${\cal O}(h^{2})$; (2) for the cases of  $p=1.2, 1.6$, the convergence rate for $\|e_h\|_{0,q}$ seems to be of an order less than ${\cal O}(h^{2})$. We conjecture this deterioration might be caused by the large value of $(|\lambda_0^n-\lambda_b^n|+\varepsilon)^{p-2}$   in the stabilizer $\widetilde{s}_T(\lambda_h^{n+1}, \sigma)$; (3) the convergence rates for  $\|\epsilon_0\|_{m,p}$ and $\|\epsilon_b\|_{0,p}$ are of orders ${\cal O}(h^{p+1-m})$ for $m=0,1$ and ${\cal O}(h^{p+1})$, respectively.

\begin{table}[htbp]
\tiny
\caption{Numerical errors and convergence rates for $L^p$-PDWG method with $k=2$ and {\color{green}$j=k-1$}.}\label{NE5}
\begin{tabular}{p{0.4cm}p{1.34cm}p{0.80cm}p{1.43cm}p{0.8cm}p{1.43cm}p{0.8cm}p{1.43cm}p{0.8cm}}
\hline
$1/h$&$\|e_h\|_{0,q}$&Rate&$\|\epsilon_0\|_{0,p}$&Rate&$\|\epsilon_b\|_{0,p}$&Rate&$\|\epsilon_0\|_{1,p}$&Rate\\
\hline
&\text{$p=1.2,$~$(\rho,\tau)=(1,0)$}&&&&&&&\\
\hline
8   &4.61e-4    &1.94    &1.07e-8    &2.05    &2.78e-7    &1.93    &4.15e-7    &0.99\\
16  &1.25e-4    &1.89    &2.37e-9    &2.18    &6.18e-8    &2.17    &1.83e-7    &1.18\\
32  &4.16e-5    &1.58    &5.24e-10   &2.18    &1.38e-8    &2.17    &8.02e-8    &1.19\\
64  &1.64e-5    &1.34    &1.15e-10   &2.19    &3.03e-9    &2.18    &3.51e-8    &1.19\\
\hline
&\text{$p=1.6,$~$(\rho,\tau)=(1,0)$}&&&&&&&\\
\hline
8   &2.46e-4    &1.91    &2.66e-7    &2.44    &3.18e-6    &2.36    &8.66e-6    &1.42\\
16  &6.44e-5    &1.93    &4.48e-8    &2.57    &5.39e-7    &2.56    &2.86e-6    &1.60\\
32  &1.68e-5    &1.94    &7.49e-9    &2.58    &9.06e-8    &2.57    &9.48e-7    &1.60\\
64  &4.44e-6    &1.92    &1.25e-9    &2.59    &1.51e-8    &2.59    &3.13e-7    &1.60\\
\hline
&\text{$p=2,$~$(\rho,\tau)=(1,0)$}&&&&&&&\\
\hline
8   &1.91e-4    &1.88    &6.03e-6    &2.76    &4.63e-5    &2.70    &1.75e-4    &1.79\\
16  &4.96e-5    &1.94    &7.80e-7    &2.95    &6.05e-6    &2.94    &4.40e-5    &1.99\\
32  &1.26e-5    &1.98    &9.91e-8    &2.98    &7.70e-7    &2.97    &1.10e-5    &2.00\\
64  &3.18e-6    &1.99    &1.25e-8    &2.99    &9.70e-8    &2.99    &2.75e-6    &2.00\\
\hline
&\text{$p=3,$~$(\rho,\tau)=(1e+2,0)$}&&&&&&&\\
\hline
8   &1.47e-4    &1.83    &4.78e-5    &2.27    &2.29e-4    &2.13    &1.16e-3    &1.38\\
16  &3.81e-5    &1.94    &6.39e-6    &2.90    &3.03e-5    &2.92    &2.96e-4    &1.97\\
32  &9.57e-6    &1.99    &5.00e-7    &3.68    &2.32e-6    &3.70    &4.50e-5    &2.71\\
64  &2.38e-6    &2.01    &3.21e-8    &3.96    &1.49e-7    &3.97    &5.72e-6    &2.98\\
\hline
&\text{$p=5,$~$(\rho,\tau)=(1e+11,0)$}&&&&&&&\\
\hline
8   &1.25e-4    &1.80    &3.21e-7    &5.00    &1.13e-6    &4.64    &6.23e-6    &4.17\\
16  &3.29e-5    &1.92    &5.45e-9    &5.88    &2.02e-8    &5.81    &2.04e-7    &4.93\\
32  &8.24e-6    &2.00    &8.81e-11   &5.95    &3.24e-10   &5.96    &6.40e-9    &4.99\\
64  &2.04e-6    &2.02    &1.40e-12   &5.97    &5.09e-12   &5.99    &2.00e-10   &5.00\\
\hline
\end{tabular}
\end{table}

\subsection{Discontinuous convection $\bbeta$}

Tables \ref{NE6-1}-\ref{NE6-2} present some numerical results for the numerical approximation $(u_h;\lambda_h)$ on the square domain $\Omega_1$ for different values of $p$. The exact solution is given by $u=\sin(x)\cos(y)$. The convection vector is piece-wisely defined in the sense that $\bbeta=[y+1,-x-1]$ for $y<1-x$ and $\bbeta=[y-2,2-x]$ otherwise, and the reaction coefficient is $c=-1$. We take $k=2$ and $j=k-1$. We can observe that for the cases of $p\geq1.6$, the convergence rate for $e_h$ in $L^q$ norm reaches an optimal order of ${\cal O}(h^{2})$ while for the case of $p=1.2$, $\|e_h\|_{0,q}$ seems to retain a convergence of order $r=1.7$. For the dual variable $\lambda_h$, the numerical rates of convergence for $\|\epsilon_0\|_{m,p}$ and $\|\epsilon_b\|_{0,p}$ remain to be of order $r=p+1-m$ for $m=0,1$ and $r=p+1$, respectively.

\begin{table}[htbp]
\tiny
\caption{Numerical errors and convergence rates for $L^p$-PDWG method with $k=2$ and $j=k-1$.}\label{NE6-1}
\begin{tabular}{p{0.4cm}p{1.34cm}p{0.80cm}p{1.43cm}p{0.8cm}p{1.43cm}p{0.8cm}p{1.34cm}p{0.8cm}}
\hline
$1/h$&$\|e_h\|_{0,q}$&Rate&$\|\epsilon_0\|_{0,p}$&Rate&$\|\epsilon_b\|_{0,p}$&Rate&$\|\epsilon_0\|_{1,p}$&Rate\\
\hline
&\text{$p=1.2,$~$(\rho,\tau)=(1,0)$}&&&&&&&\\
\hline
8   &1.26e-3    &1.97    &1.50e-7    &2.06    &5.24e-6    &1.98    &5.37e-6    &1.05\\
16  &3.53e-4    &1.83    &3.41e-8    &2.15    &1.21e-6    &2.11    &2.42e-6    &1.15\\
32  &1.05e-4    &1.75    &7.58e-9    &2.17    &2.72e-7    &2.16    &1.07e-6    &1.18\\
64  &3.21e-5    &1.71    &1.67e-9    &2.19    &6.01e-8    &2.18    &4.68e-7    &1.19\\
\hline
&\text{$p=1.6,$~$(\rho,\tau)=(1,0)$}&&&&&&&\\
\hline
8   &7.69e-4    &2.08    &3.04e-6    &2.65    &4.89e-5    &2.61    &9.36e-5    &1.64\\
16  &1.89e-4    &2.02    &5.08e-7    &2.58    &8.23e-6    &2.56    &3.10e-5    &1.59\\
32  &4.78e-5    &1.99    &8.50e-8    &2.58    &1.39e-6    &2.57    &1.03e-5    &1.59\\
64  &1.22e-5    &1.98    &1.42e-8    &2.59    &2.31e-7    &2.58    &3.42e-6    &1.59\\
\hline
&\text{$p=2,$~$(\rho,\tau)=(1,0)$}&&&&&&&\\
\hline
8   &6.57e-4    &2.08    &5.71e-5    &2.86    &5.88e-4    &2.84    &1.61e-3    &1.83\\
16  &1.59e-4    &2.04    &7.42e-6    &2.94    &7.67e-5    &2.94    &4.15e-4    &1.95\\
32  &3.94e-5    &2.02    &9.45e-7    &2.97    &9.77e-6    &2.97    &1.05e-4    &1.98\\
64  &9.80e-6    &2.01    &1.19e-7    &2.99    &1.23e-6    &2.99    &2.64e-5    &1.99\\
\hline
&\text{$p=3,$~$(\rho,\tau)=(1e+4,0)$}&&&&&&&\\
\hline
8   &5.63e-4    &2.07    &7.87e-6    &2.88    &4.71e-5    &2.87    &1.97e-4    &1.83\\
16  &1.37e-4    &2.04    &5.95e-7    &3.73    &3.57e-6    &3.72    &2.99e-5    &2.72\\
32  &3.35e-5    &2.03    &3.82e-8    &3.96    &2.29e-7    &3.96    &3.83e-6    &2.97\\
64  &8.26e-6    &2.02    &2.41e-9    &3.99    &1.44e-8    &3.99    &4.81e-7    &2.99\\
\hline
&\text{$p=5,$~$(\rho,\tau)=(1e+11,0)$}&&&&&&&\\
\hline
8   &5.07e-4    &2.25    &2.16e-6    &2.82    &9.44e-6    &2.65    &5.02e-5    &1.95\\
16  &1.23e-4    &2.04    &3.84e-8    &5.81    &1.76e-7    &5.75    &1.81e-6    &4.79\\
32  &3.02e-5    &2.03    &6.04e-10   &5.99    &2.76e-10   &5.99    &5.68e-8    &4.99\\
64  &7.46e-6    &2.02    &9.45e-12   &6.00    &4.32e-12   &6.00    &1.77e-9    &5.00\\
\hline
\end{tabular}
\end{table}

\begin{table}[htbp]
\tiny
\caption{Numerical errors and convergence rates for $L^p$-PDWG method with $k=2$ and $j=k-1$.}\label{NE6-2}
\begin{tabular}{p{0.4cm}p{1.34cm}p{0.80cm}p{1.43cm}p{0.8cm}p{1.43cm}p{0.8cm}p{1.34cm}p{0.8cm}}
\hline
$1/h$&$\|e_h\|_{0,q}$&Rate&$\|\epsilon_0\|_{0,p}$&Rate&$\|\epsilon_b\|_{0,p}$&Rate&$\|\epsilon_0\|_{1,p}$&Rate\\
\hline
&\text{$p=1.2,$~$(\rho,\tau)=(1,1)$}&&&&&&&\\
\hline
8   &1.39e-3    &1.99    &1.02e-7    &2.09    &4.89e-6    &2.06    &2.93e-6    &1.01\\
16  &3.71e-4    &1.90    &2.29e-8    &2.16    &1.09e-6    &2.16    &1.29e-6    &1.18\\
32  &1.07e-4    &1.80    &5.05e-9    &2.18    &2.41e-7    &2.18    &5.66e-7    &1.19\\
64  &3.23e-5    &1.73    &1.11e-9    &2.19    &5.28e-8    &2.19    &2.47e-7    &1.20\\
\hline
&\text{$p=1.6,$~$(\rho,\tau)=(1,1)$}&&&&&&&\\
\hline
8   &9.12e-4    &2.00    &2.07e-6    &2.64    &4.59e-5    &2.65    &6.04e-5    &1.56\\
16  &2.26e-4    &2.01    &3.43e-7    &2.59    &7.61e-6    &2.59    &2.00e-5    &1.59\\
32  &5.69e-5    &1.99    &5.71e-8    &2.58    &1.27e-6    &2.59    &6.66e-6    &1.59\\
64  &1.45e-5    &1.98    &9.49e-9    &2.59    &2.10e-7    &2.59    &2.21e-6    &1.59\\
\hline
&\text{$p=2,$~$(\rho,\tau)=(1,1)$}&&&&&&&\\
\hline
8   &7.84e-4    &2.02    &4.04e-5    &2.88    &5.60e-4    &2.89    &1.19e-3    &1.76\\
16  &1.92e-4    &2.03    &5.16e-6    &2.97    &7.19e-5    &2.96    &3.08e-4    &1.95\\
32  &4.77e-5    &2.01    &6.52e-7    &2.98    &9.10e-6    &2.98    &7.77e-5    &1.98\\
64  &1.19e-5    &2.00    &8.19e-8    &2.99    &1.14e-6    &2.99    &1.95e-5    &2.00\\
\hline
&\text{$p=3,$~$(\rho,\tau)=(1e+4,1e+3)$}&&&&&&&\\
\hline
8   &5.80e-4    &2.12    &7.13e-6    &2.86    &4.65e-5    &2.89    &1.85e-4    &1.81\\
16  &1.37e-4    &2.08    &5.74e-7    &3.63    &3.55e-6    &3.71    &2.92e-5    &2.66\\
32  &3.35e-5    &2.03    &3.77e-8    &3.93    &2.29e-7    &3.96    &3.80e-6    &2.94\\
64  &8.26e-6    &2.02    &2.39e-9    &3.98    &1.44e-8    &3.99    &4.79e-7    &2.99\\
\hline
&\text{$p=5,$~$(\rho,\tau)=(1e+11,1e+10)$}&&&&&&&\\
\hline
8   &5.07e-4    &2.36    &2.15e-6    &2.37    &9.44e-6    &2.59    &5.01e-5    &1.52\\
16  &1.23e-4    &2.04    &3.84e-8    &5.81    &1.76e-7    &5.75    &1.81e-6    &4.79\\
32  &3.02e-5    &2.03    &6.04e-10   &5.99    &2.76e-9    &5.99    &5.68e-8    &4.99\\
64  &7.46e-6    &2.02    &9.45e-12   &6.00    &4.32e-11   &6.00    &1.77e-9    &5.00\\
\hline
\end{tabular}
\end{table}

Table \ref{NE7} contains some numerical results for the model equation \eqref{model} with the exact solution $u=\sin(x)\cos(y)$ on the L-shaped domain $\Omega_2$  for $k=2$ and $j=k-1$ with different values of $p$. The convection vector is piece-wisely defined as follows: $\bbeta=[1,-1]$ when $x+y<1$ and $\bbeta=[-1,1]$ elsewhere. The reaction coefficient is $c=1$. These numerical results suggest that the optimal order of convergence can be seen for numerical error $e_h$ in $L^q$ norm. For the approximation of dual variable $\lambda_h$, the convergence rates are of order $r=p+1-m$ and $r=p+1$ for $\|\epsilon_0\|_{m,p}$ for $m=0,1$ and $\|\epsilon_b\|_{0,p}$, respectively.

\begin{table}[htbp]
\tiny
\caption{Numerical errors and convergence rates for $L^p$-PDWG method with $k=2$ and $j=k-1$.}\label{NE7}
\begin{tabular}{p{0.4cm}p{1.34cm}p{0.80cm}p{1.43cm}p{0.8cm}p{1.43cm}p{0.8cm}p{1.43cm}p{0.8cm}}
\hline
$1/h$&$\|e_h\|_{0,q}$&Rate&$\|\epsilon_0\|_{0,p}$&Rate&$\|\epsilon_b\|_{0,p}$&Rate&$\|\epsilon_0\|_{1,p}$&Rate\\
\hline
&\text{$p=1.2,$~$(\rho,\tau)=(1,0)$}&&&&&&&\\
\hline
8   &6.79e-4    &2.07    &5.84e-8    &2.04    &9.52e-7    &1.88    &2.39e-6    &1.05\\
16  &1.61e-4    &2.07    &1.32e-8    &2.14    &2.25e-7    &2.08    &1.08e-6    &1.14\\
32  &3.88e-5    &2.05    &2.92e-9    &2.18    &5.08e-8    &2.15    &4.78e-7    &1.18\\
64  &9.48e-6    &2.03    &6.41e-10   &2.19    &1.13e-8    &2.18    &2.10e-7    &1.19\\
\hline
&\text{$p=1.6,$~$(\rho,\tau)=(1,0)$}&&&&&&&\\
\hline
8   &3.57e-4    &2.04    &1.31e-6    &2.55    &1.16e-5    &2.41    &4.25e-5    &1.56\\
16  &8.75e-5    &2.03    &2.21e-7    &2.56    &2.03e-6    &2.51    &1.44e-5    &1.57\\
32  &2.16e-5    &2.02    &3.71e-8    &2.58    &3.44e-7    &2.56    &4.80e-6    &1.58\\
64  &5.37e-6    &2.01    &6.16e-9    &2.59    &5.76e-8    &2.58    &1.60e-6    &1.59\\
\hline
&\text{$p=2,$~$(\rho,\tau)=(1,0)$}&&&&&&&\\
\hline
8   &2.87e-4    &2.02    &2.68e-5    &2.85    &1.62e-4    &2.72    &7.43e-4    &1.86\\
16  &7.11e-5    &2.01    &3.48e-6    &2.94    &2.18e-5    &2.90    &1.93e-4    &1.95\\
32  &1.77e-5    &2.01    &4.42e-7    &2.98    &2.80e-6    &2.96    &4.90e-5    &1.98\\
64  &4.41e-6    &2.00    &5.57e-8    &2.99    &3.55e-7    &2.98    &1.23e-5    &1.99\\
\hline
&\text{$p=3,$~$(\rho,\tau)=(1e+4,0)$}&&&&&&&\\
\hline
8   &2.33e-4    &1.97    &4.18e-6    &3.12    &1.51e-5    &3.02    &9.09e-5    &2.14\\
16  &5.85e-5    &2.00    &2.97e-7    &3.81    &1.10e-6    &3.77    &1.29e-5    &2.81\\
32  &1.46e-5    &2.00    &1.90e-8    &3.97    &7.13e-8    &3.95    &1.65e-6    &2.97\\
64  &3.65e-6    &2.00    &1.20e-9    &3.99    &4.51e-9    &3.98    &2.08e-7    &2.99\\
\hline
&\text{$p=5,$~$(\rho,\tau)=(1e+12,0)$}&&&&&&&\\
\hline
8   &2.07e-4    &1.97    &1.05e-7    &5.50    &2.42e-7    &5.38    &1.78e-6    &4.52\\
16  &5.20e-5    &2.00    &1.71e-9    &5.95    &4.03e-9    &5.91    &5.78e-8    &4.95\\
32  &1.30e-5    &2.00    &2.70e-11   &5.98    &6.45e-11   &5.97    &1.83e-9    &4.98\\
64  &3.25e-6    &2.00    &4.25e-13   &5.99    &1.02e-12   &5.99    &5.75e-11   &4.99\\
\hline
\end{tabular}
\end{table}

Table \ref{NE8} presents some numerical rates of convergence for the numerical approximation $(u_h;\lambda_h)$ on the unit square domain $\Omega_1$ for $k=2$ and $j=k-1$  with different values of $p$. The exact solution is chosen as follows
  \begin{equation*}
\begin{split}
& u=\begin{cases}\cos(y-1/2) + \sin(x + y),& \mbox{if}~~y<\frac{1}{2},\\
  1+ \sin(x + y),& \mbox{otherwise,}\end{cases}
\end{split}
\end{equation*}
which implies $u\in H^{1+\epsilon}$ for $\epsilon>0$. The convection vector is piece-wisely defined such that $\bbeta=[x-2,\frac{1}{2}-y]$ when $y<1/2$ and $\bbeta=[2-x,-y+\frac{1}{2}]$ otherwise. The reaction coefficient is $c=0$. As we can observe from these numerical results, the convergence rate for $\|e_h\|_{0,q}$ varies from ${\cal O}(h^{1.2})$ to ${\cal O}(h^{1.9})$ as $p$ increases from $p=1.2$ to $p=5$. This indicates the convergence rate for $\|e_h\|_{0,q}$ seems to be related to $p$. As for the dual variable $\lambda_h$, the numerical errors $\|\epsilon_0\|_{m,p}$ and $\|\epsilon_b\|_{0,p}$ converge at the rates of ${\cal O}(h^{p+1-m})$ for $m=0,1$ and ${\cal O}(h^{p+1})$, respectively.

\begin{table}[htbp]
\tiny
\caption{Numerical errors and convergence rates for $L^p$-PDWG method with $k=2$ and $j=k-1$.}\label{NE8}
\begin{tabular}{p{0.4cm}p{1.34cm}p{0.80cm}p{1.43cm}p{0.8cm}p{1.43cm}p{0.8cm}p{1.43cm}p{0.8cm}}
\hline
$1/h$&$\|e_h\|_{0,q}$&Rate&$\|\epsilon_0\|_{0,p}$&Rate&$\|\epsilon_b\|_{0,p}$&Rate&$\|\epsilon_0\|_{1,p}$&Rate\\
\hline
&\text{$p=1.2,$~$(\rho,\tau)=(1,0)$}&&&&&&&\\
\hline
16  &3.49e-4    &1.59    &6.48e-8    &2.18    &1.36e-6    &2.16    &6.47e-6    &1.18\\
32  &1.42e-4    &1.30    &1.42e-8    &2.19    &3.01e-7    &2.17    &2.84e-6    &1.19\\
64  &6.15e-5    &1.20    &3.11e-9    &2.19    &6.62e-8    &2.18    &1.24e-6    &1.20\\
128 &2.72e-5    &1.18    &6.78e-10   &2.20    &1.45e-8    &2.19    &5.40e-7    &1.20\\
\hline
&\text{$p=1.6,$~$(\rho,\tau)=(1,0)$}&&&&&&&\\
\hline
16  &1.96e-4    &1.83    &9.99e-7    &2.63    &1.04e-5    &2.62    &8.52e-5    &1.63\\
32  &6.11e-5    &1.69    &1.65e-7    &2.60    &1.73e-6    &2.59    &2.82e-5    &1.60\\
64  &2.11e-5    &1.53    &2.74e-8    &2.59    &2.87e-7    &2.59    &9.31e-6    &1.60\\
128 &7.82e-6    &1.44    &4.52e-9    &2.60    &4.76e-8    &2.59    &3.08e-6    &1.60\\
\hline
&\text{$p=2,$~$(\rho,\tau)=(1,0)$}&&&&&&&\\
\hline
16  &1.66e-4    &1.89    &1.50e-5    &2.98    &1.05e-4    &2.97    &1.16e-3    &1.94\\
32  &4.71e-5    &1.82    &1.89e-6    &2.99    &1.33e-5    &2.98    &2.91e-4    &1.97\\
64  &1.42e-5    &1.73    &2.37e-7    &2.99    &1.67e-6    &2.99    &7.31e-5    &1.99\\
128 &4.51e-6    &1.65    &2.98e-8    &3.00    &2.09e-7    &3.00    &1.83e-5    &2.00\\
\hline
&\text{$p=3,$~$(\rho,\tau)=(1e+4,0)$}&&&&&&&\\
\hline
16  &1.44e-4    &1.94    &1.26e-6    &3.69    &5.27e-6    &3.67    &8.21e-5    &2.68\\
32  &3.85e-5    &1.90    &8.06e-8    &3.96    &3.38e-7    &3.96    &1.05e-5    &2.96\\
64  &1.05e-5    &1.87    &5.06e-9    &3.99    &2.12e-8    &3.99    &1.32e-6    &2.99\\
128 &2.92e-6    &1.85    &3.17e-10   &4.00    &1.33e-9    &4.00    &1.66e-7    &3.00\\
\hline
&\text{$p=5,$~$(\rho,\tau)=(1e+13,0)$}&&&&&&&\\
\hline
16  &1.43e-4    &1.97    &8.10e-10   &5.99    &2.17e-9    &5.99    &4.34e-8    &4.98\\
32  &3.73e-5    &1.94    &1.27e-11   &5.99    &3.40e-11   &6.00    &1.36e-9    &4.99\\
64  &9.81e-6    &1.93    &1.99e-13   &6.00    &5.32e-13   &6.00    &4.27e-11   &5.00\\
128 &2.59e-6    &1.92    &3.00e-15   &6.00    &8.00e-15   &6.00    &1.34e-12   &5.00\\
\hline
\end{tabular}
\end{table}

\subsection{Plots for numerical solutions}
Figure \ref{NE9} demonstrates the surface plots for the numerical solutions in $\Omega_1$ with different values of $p$. The test problem has the following configuration: (1) the convection vector is piece-wisely defined in the sense that $\bbeta=[1,-1]$ when $x+y<1$ and $\bbeta=[-2,2]$ otherwise; (2) the reaction coefficient is $c=0$; (3) the inflow boundary data is $g=1$ on edge $\{0\}\times(0,1)$ and $g=-1$ on edge $\{1\}\times(0,1)$; (4) the load function is $f=0$; (5) the stabilization parameter is $\tau=0$; (6) $k=2$ and $j=k-1$. The left ones in Figure \ref{NE9} are the surface plots for the primal variable $u_h$, and the right ones are the surface plots for the dual variable $\lambda_0$. The exact solution of primal variable is $u=1$ for $y<1-x$ and $u=-1$ for $y>1-x$. Note that the exact solution of dual variable is $0$. The plots indicate that the numerical solution $u_h$ is in perfect consistency with the exact solution $u$, and the dual variable $\lambda_0$ is zero everywhere in $\Omega_1$ upto machine accuracy.

\begin{figure}[!htbp]
\centering
\begin{tabular}{cccc}
\resizebox{2.4in}{1.6in}{\includegraphics{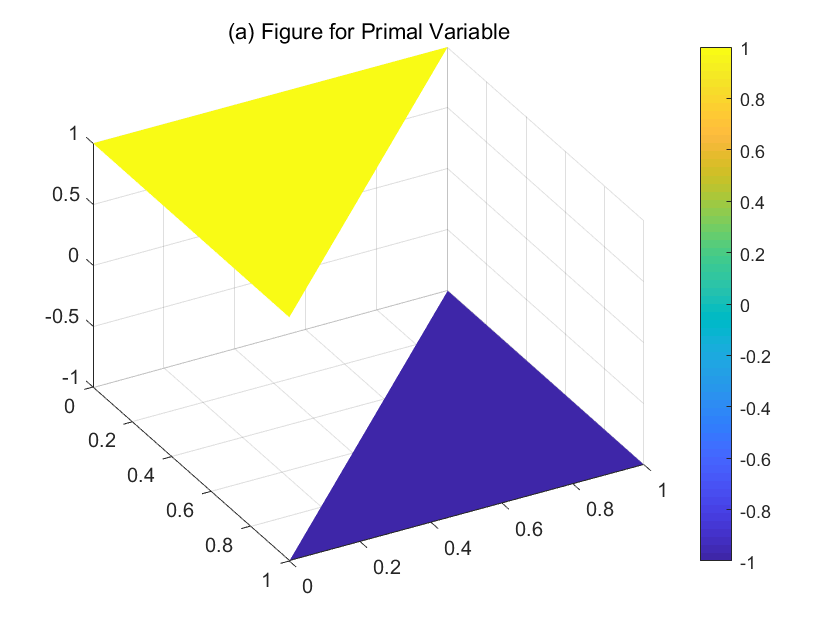}}
\resizebox{2.4in}{1.6in}{\includegraphics{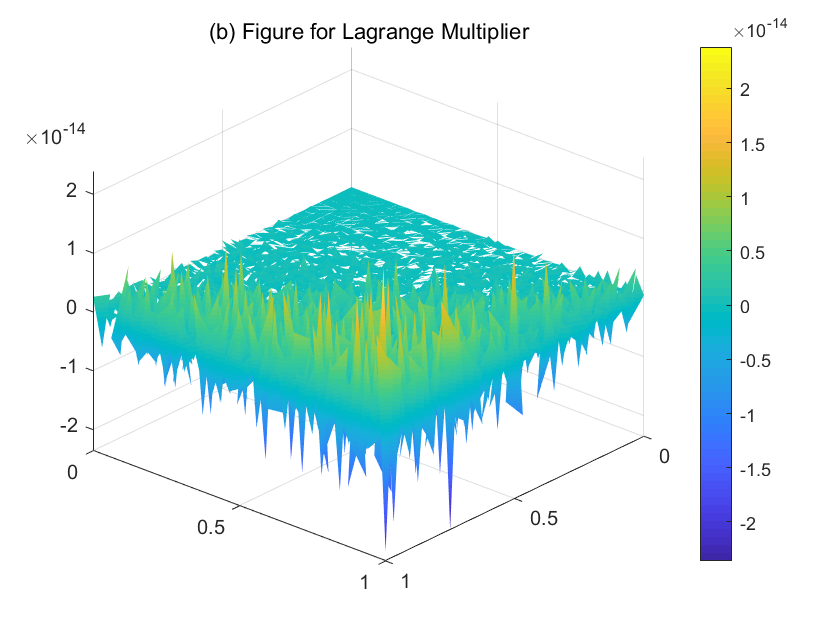}}\\
\resizebox{2.4in}{1.6in}{\includegraphics{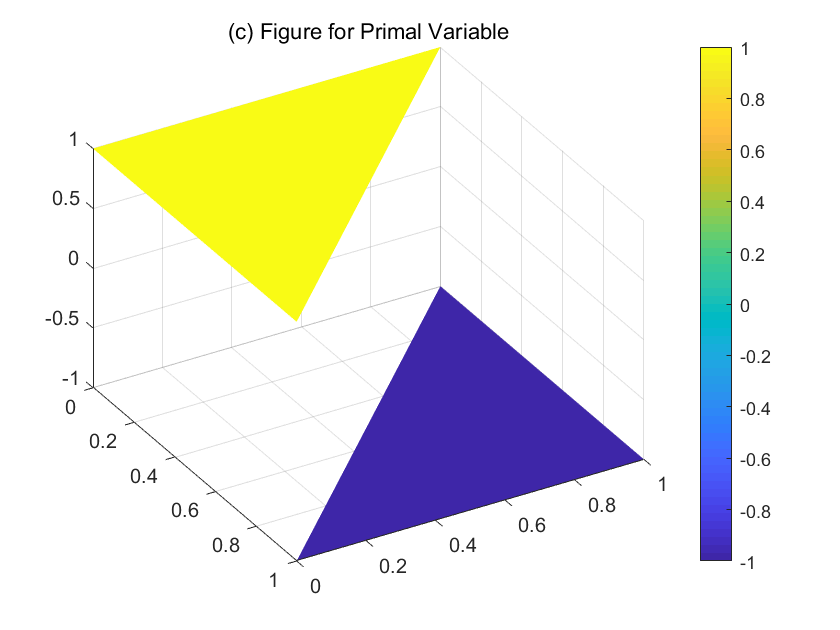}}
\resizebox{2.4in}{1.6in}{\includegraphics{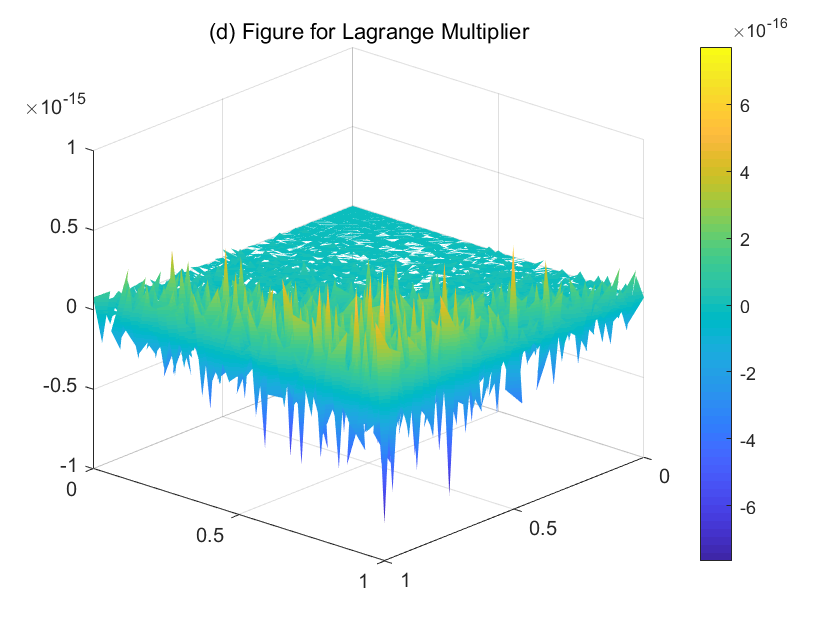}}
\end{tabular}
\caption{Surface plots for numerical solutions with $k=2$ and $j=k-1$ on $\Omega_1$, (a) the primal variable $u_h$ and (b) the Lagrange multiplier $\lambda_0$ for $\{p,\rho,\tau\}=\{1.2,1,0\}$, (c) the primal variable $u_h$ and (d) the Lagrange multiplier $\lambda_0$ for $\{p,\rho,\tau\}=\{5,1e+11,0\}$.}\label{NE9}
\end{figure}

Figures \ref{NE10}-\ref{NE11} present some contour plots for the primal variable $u_h$ and the dual variable $\lambda_0$ in $\Omega_1$ for the cases of $p=1.2$ and $p=5$ respectively. The following configuration is as follows: (1) the convection vector is piece-wisely defined such that $\bbeta=[-y,x]$ if $y<1-x$ and $\bbeta=[1-y,x-1]$ otherwise; (2) the reaction coefficient is $c=0$; (3) the inflow boundary data is given by $g =\sin(2x)$; (4) the stabilization parameter is $\tau=0$; (5) $k=2$ and $j=k-1$.  Note that the exact solution of model problem \eqref{model} is unknown.

\begin{figure}[!htbp]
\centering
\begin{tabular}{cccc}
\resizebox{2.25in}{1.7in}{\includegraphics{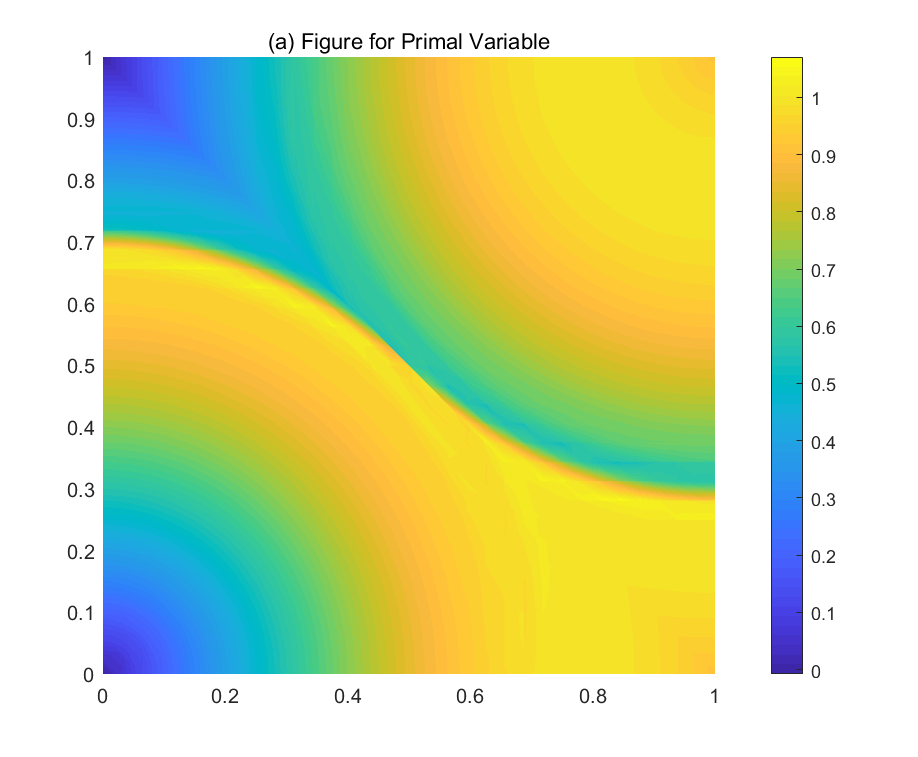}}
\resizebox{2.25in}{1.7in}{\includegraphics{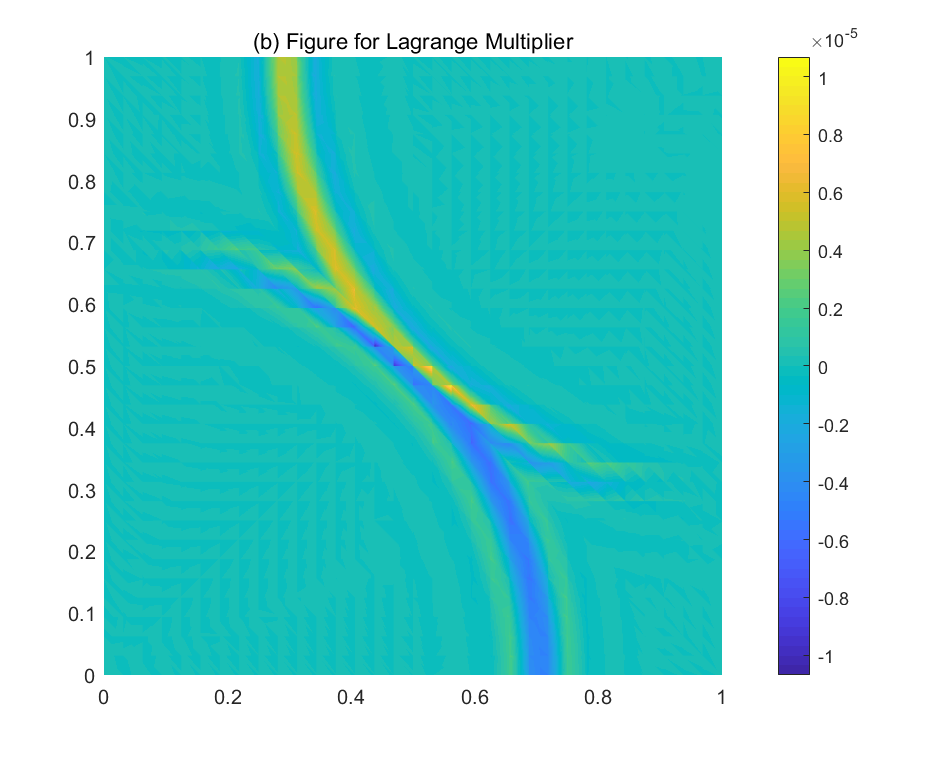}}\\
\resizebox{2.25in}{1.7in}{\includegraphics{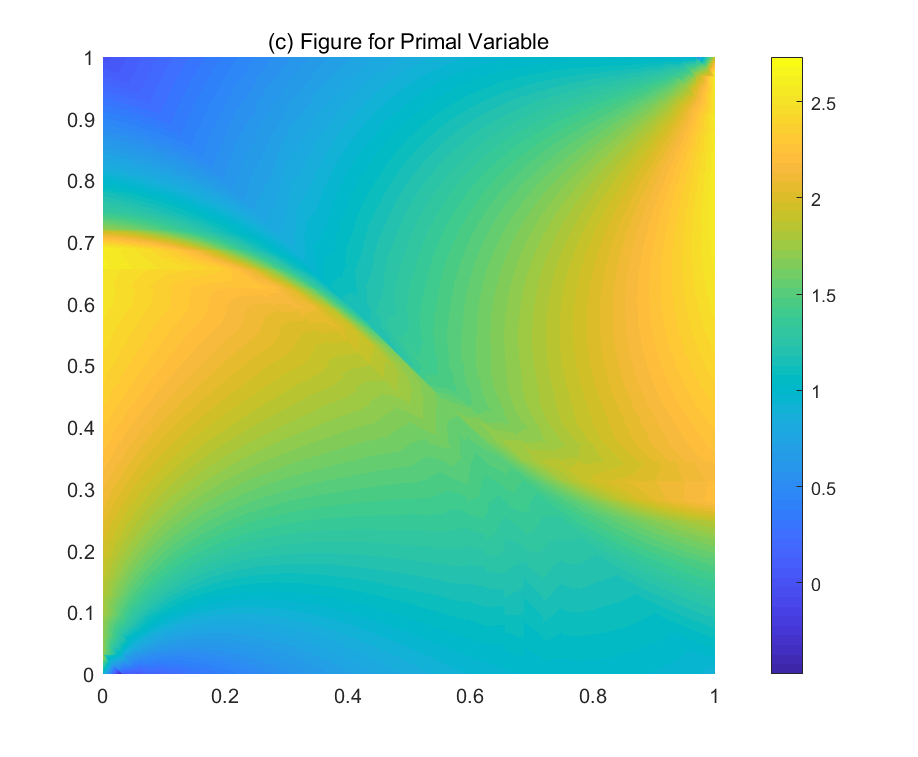}}
\resizebox{2.25in}{1.7in}{\includegraphics{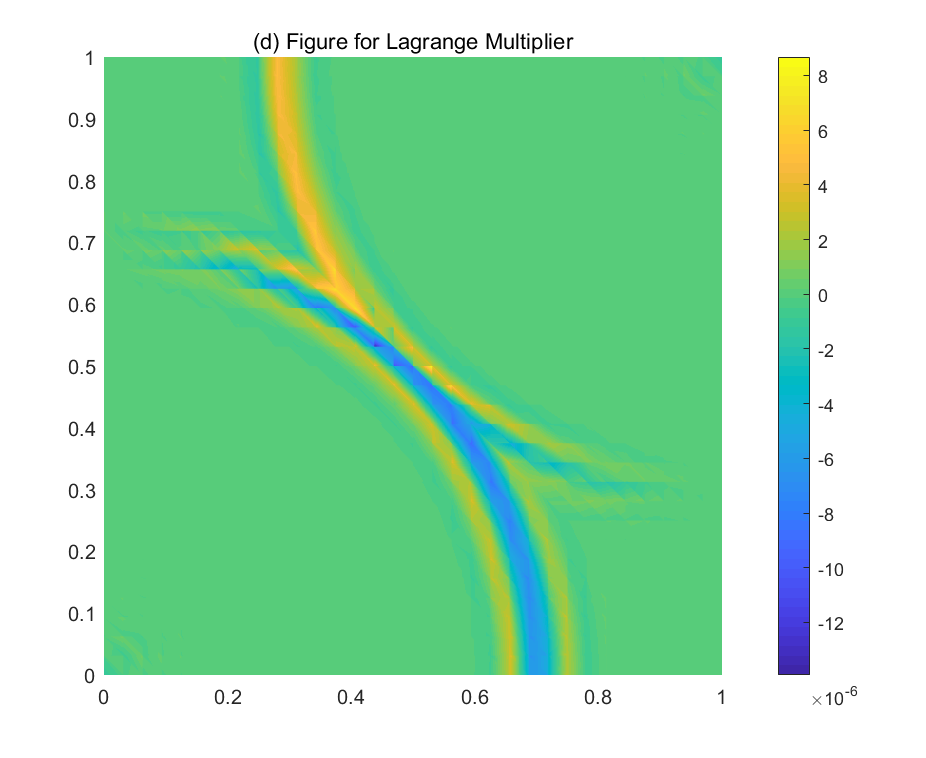}}
\end{tabular}
\caption{Contour plots for numerical solutions with $k=2$ and $j=k-1$ on $\Omega_1$, (a) the primal variable $u_h$ and (b) the Lagrange multiplier $\lambda_0$ for $f=0$, (c) the primal variable $u_h$ and (d) the Lagrange multiplier $\lambda_0$ for $f=1$, the coefficients $\{p,\rho,\tau\}=\{1.2,1,0\}$.}\label{NE10}
\end{figure}

\begin{figure}[!htbp]
\centering
\begin{tabular}{cccc}
\resizebox{2.25in}{1.7in}{\includegraphics{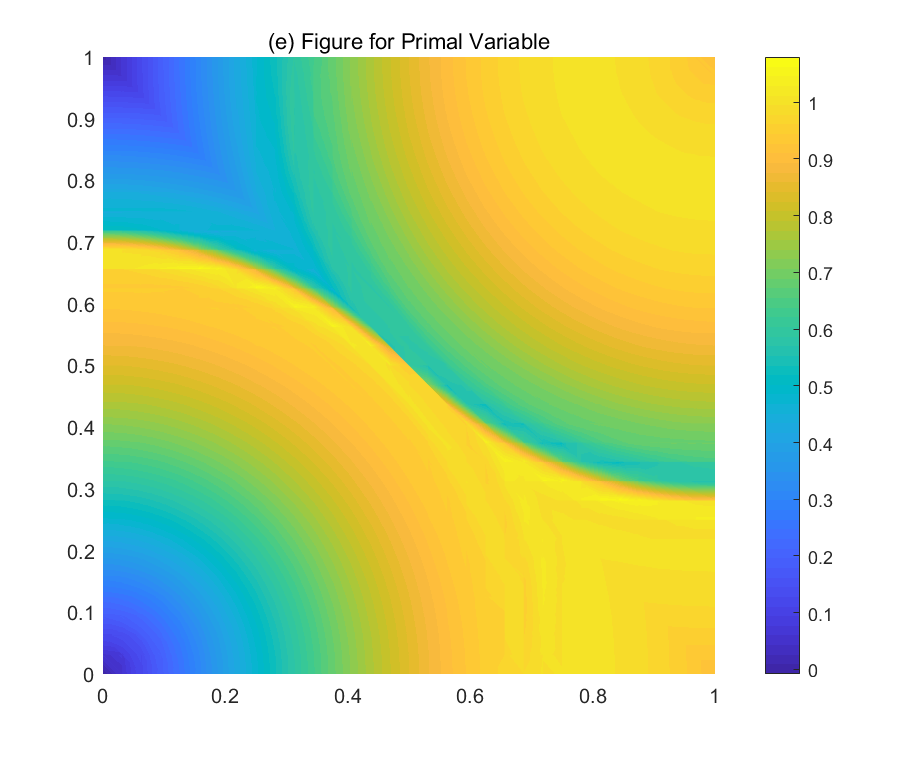}}
\resizebox{2.25in}{1.7in}{\includegraphics{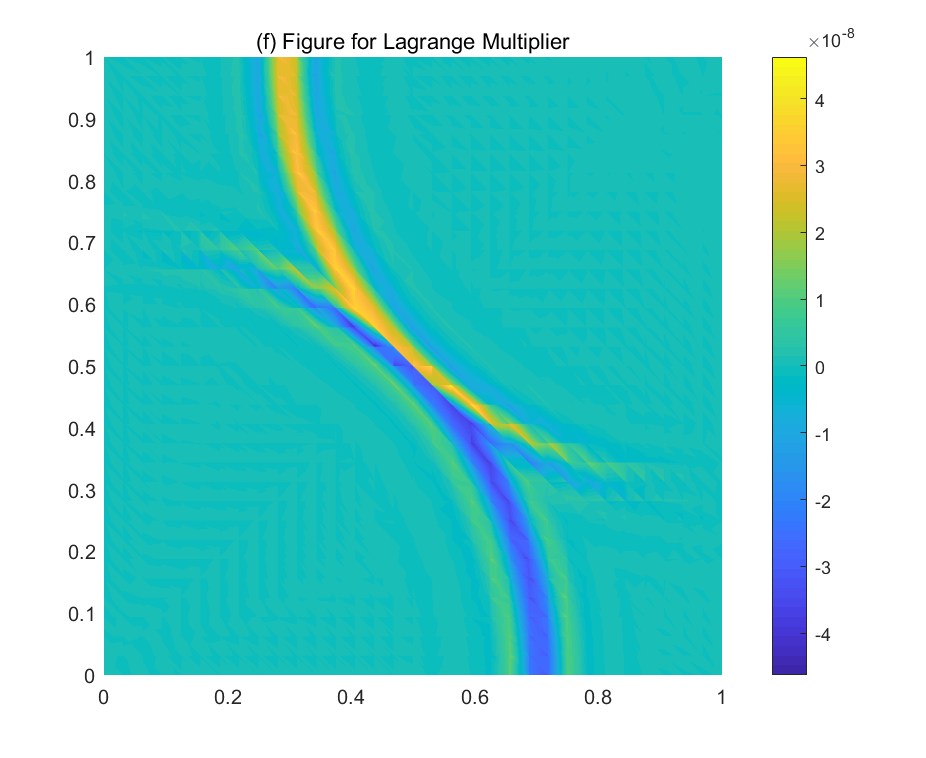}}\\
\resizebox{2.25in}{1.7in}{\includegraphics{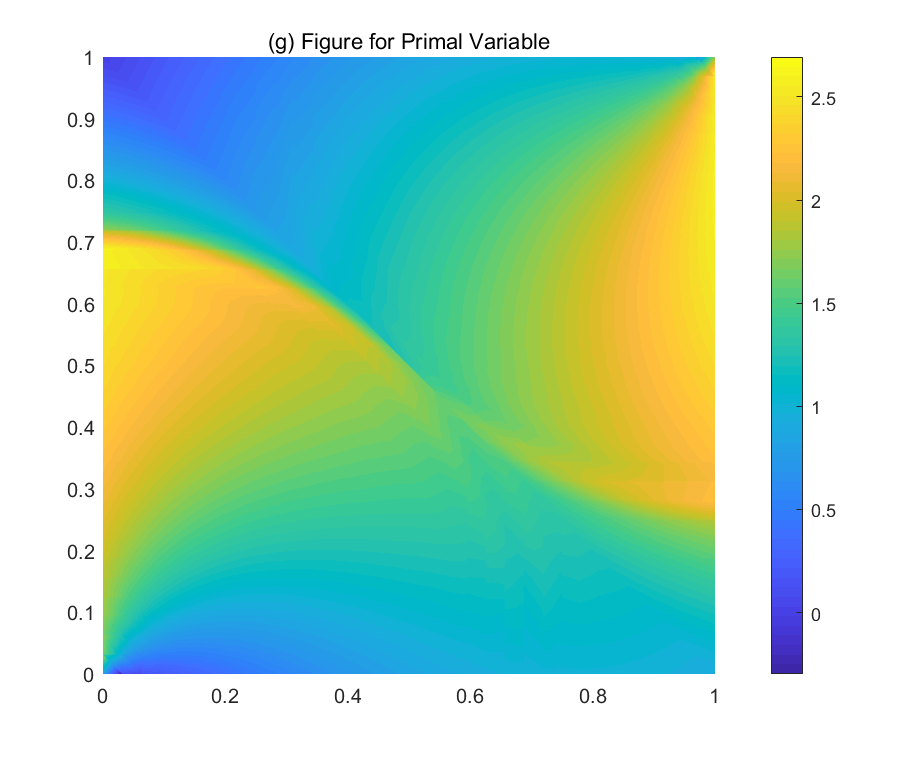}}
\resizebox{2.25in}{1.7in}{\includegraphics{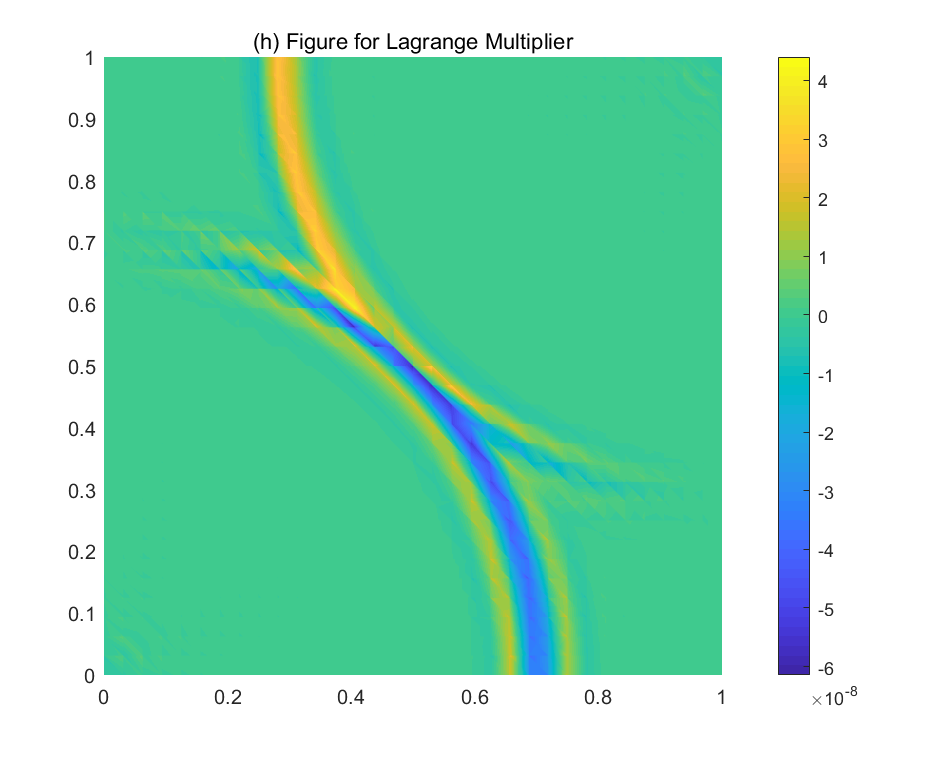}}
\end{tabular}
\caption{Contour plots for numerical solutions with $k=2$ and $j=k-1$ on $\Omega_1$, (e) the primal variable $u_h$ and (f) the Lagrange multiplier $\lambda_0$ for $f=0$, (g) the primal variable $u_h$ and (h) the Lagrange multiplier $\lambda_0$ for $f=1$, the coefficients $\{p,\rho,\tau\}=\{5,1e+12,0\}$.}\label{NE11}
\end{figure}

Figures \ref{NE12}-\ref{NE13} illustrate some contour plots for the primal variable $u_h$ and the dual variable $\lambda_0$ in $\Omega_1$ for $k=2$ and $j=k-1$ when $p=1.2$ and $p=5$ are employed. The convection vector is given by $\bbeta=[0.5-y,x-0.5]$, and the reaction coefficient is $c=1$. The inflow boundary data is $g =\cos(\pi x)\cos(\pi y)$. The left ones in Figures \ref{NE12}-\ref{NE13} show the contour plots of the primal variable $u_h$ corresponding to the load functions $f=0$ and $f=1$ respectively; while the right ones are the contour plots of the dual variable $\lambda_0$ for the load functions $f=0$ and $f=1$ respectively. Note that the exact solution of primal variable is unknown.

\begin{figure}[!htbp]
\centering
\begin{tabular}{cccc}
\resizebox{2.25in}{1.7in}{\includegraphics{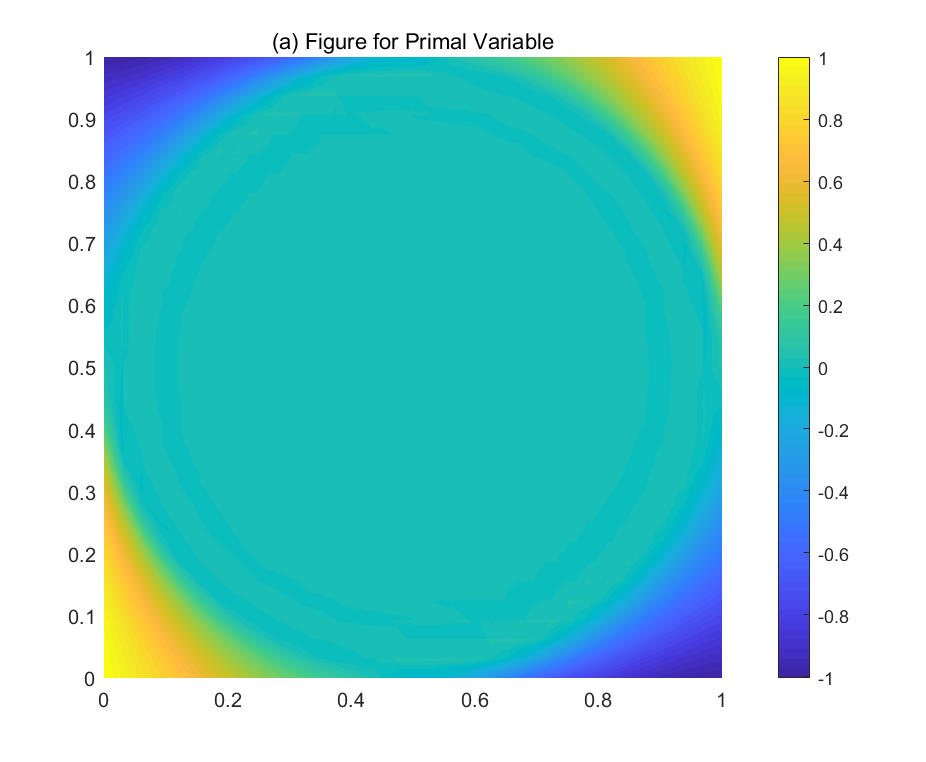}}
\resizebox{2.25in}{1.7in}{\includegraphics{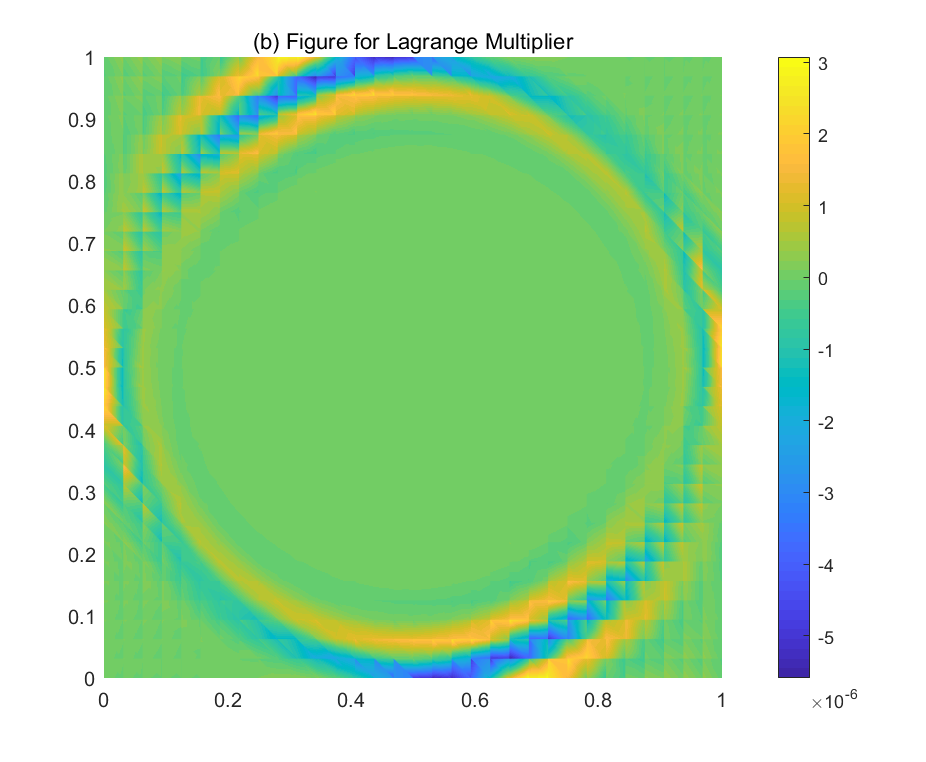}}\\
\resizebox{2.25in}{1.7in}{\includegraphics{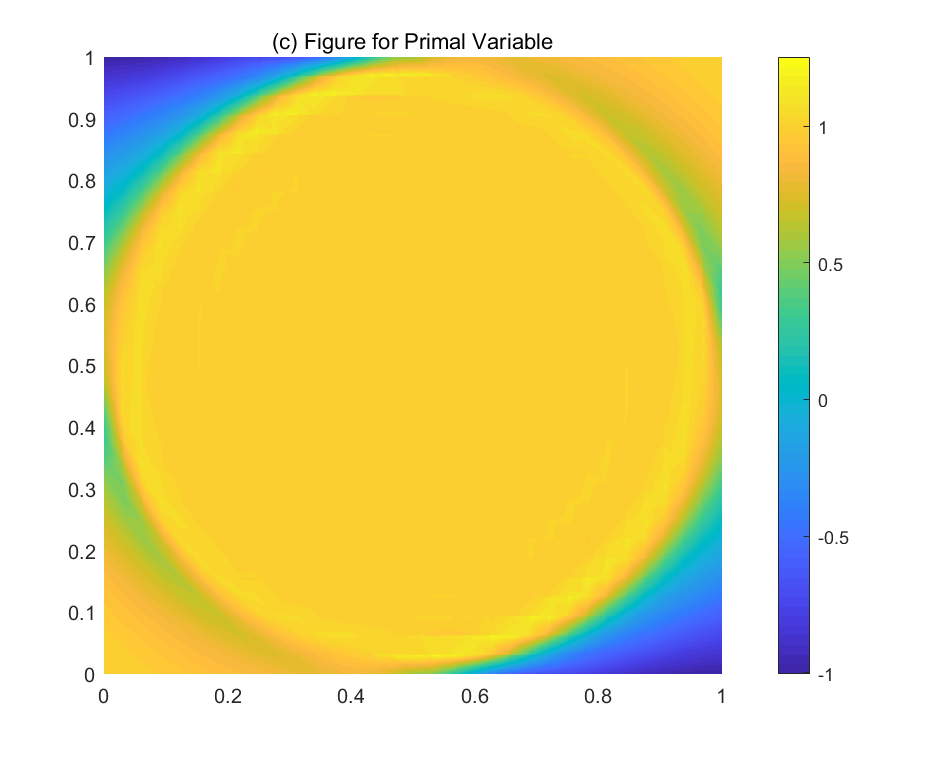}}
\resizebox{2.25in}{1.7in}{\includegraphics{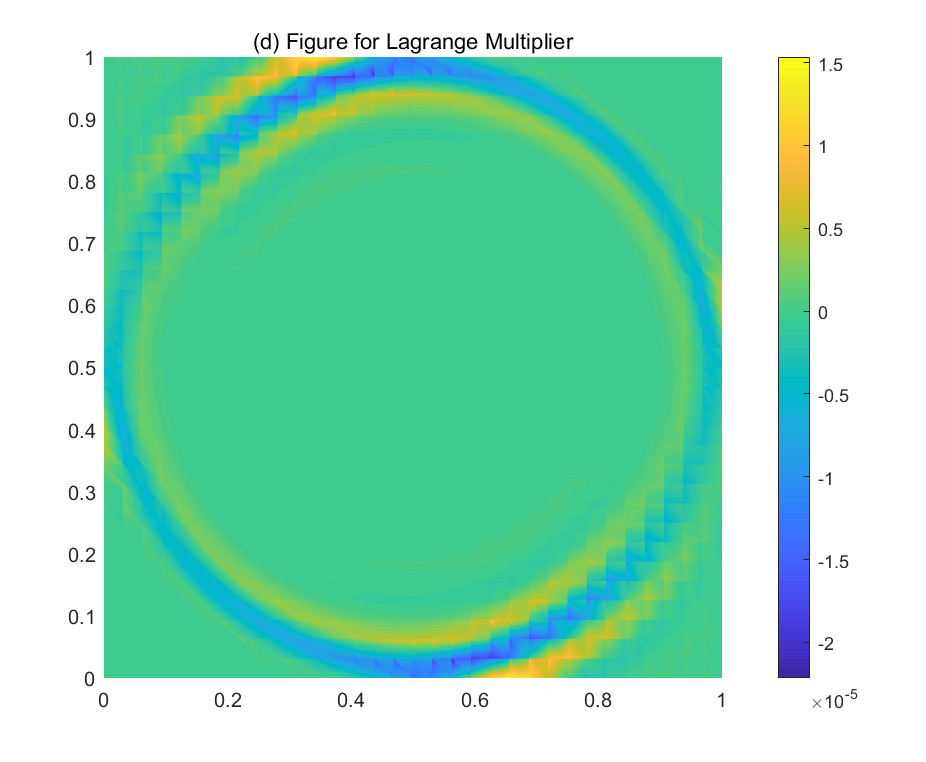}}
\end{tabular}
\caption{Contour plots for numerical solutions with $k=2$ and $j=k-1$ on $\Omega_1$, (a) the primal variable $u_h$ and (b) the Lagrange multiplier $\lambda_0$ for $f=0$, (c) the primal variable $u_h$ and (d) the Lagrange multiplier $\lambda_0$ for $f=1$, the coefficients $\{p,\rho,\tau\}=\{1.2,1,0\}$.}\label{NE12}
\end{figure}

\begin{figure}[!htbp]
\centering
\begin{tabular}{cccc}
\resizebox{2.25in}{1.7in}{\includegraphics{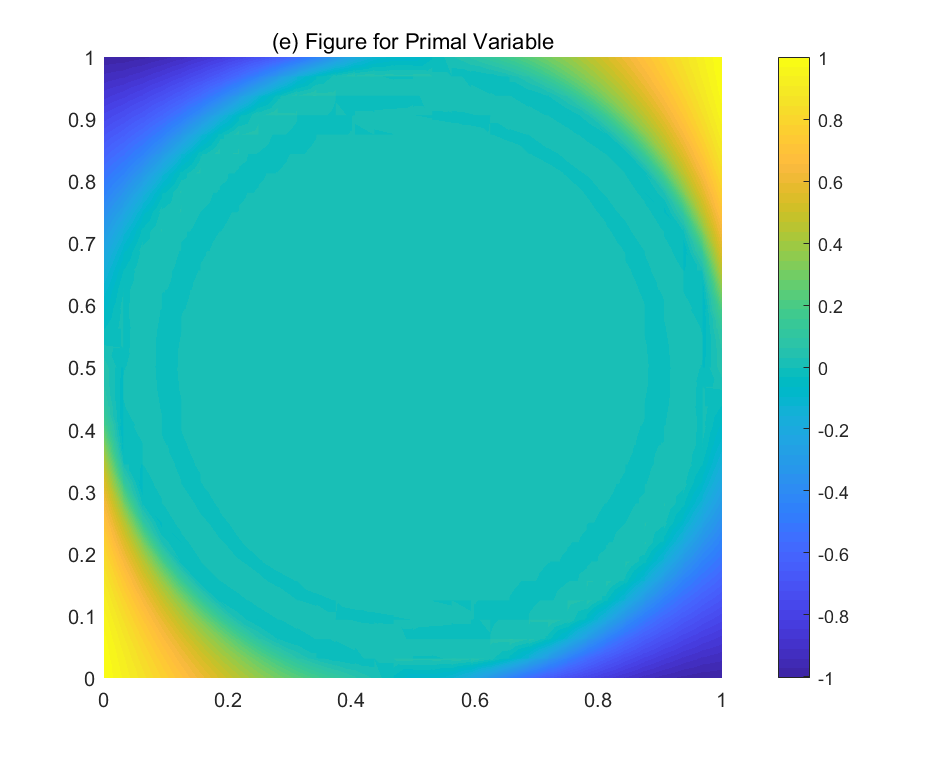}}
\resizebox{2.25in}{1.7in}{\includegraphics{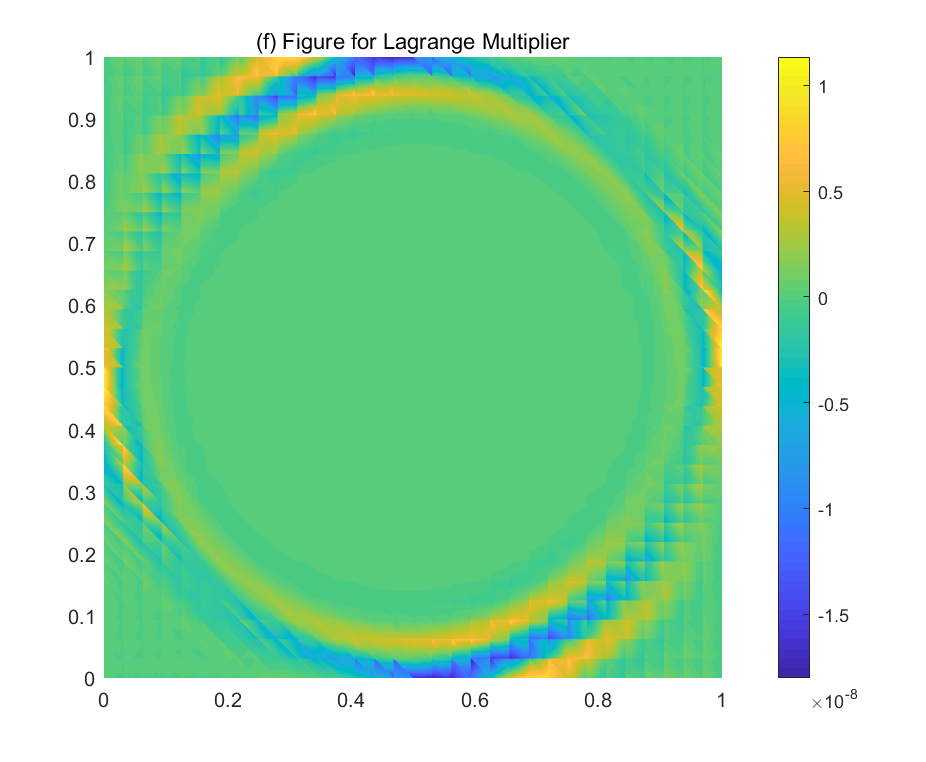}}\\
\resizebox{2.25in}{1.7in}{\includegraphics{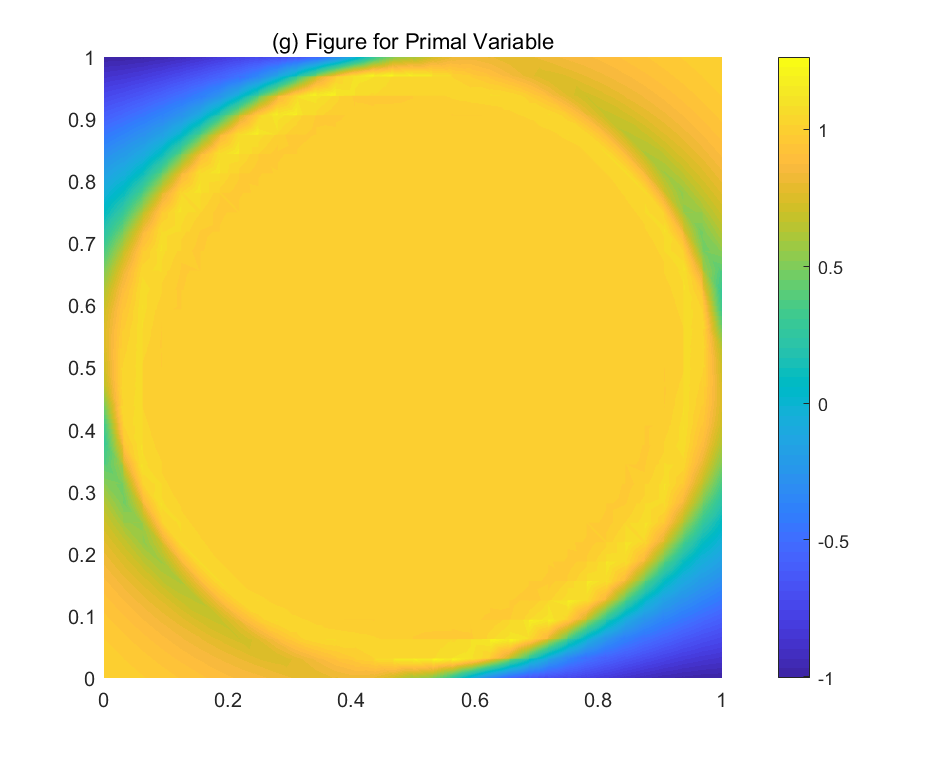}}
\resizebox{2.25in}{1.7in}{\includegraphics{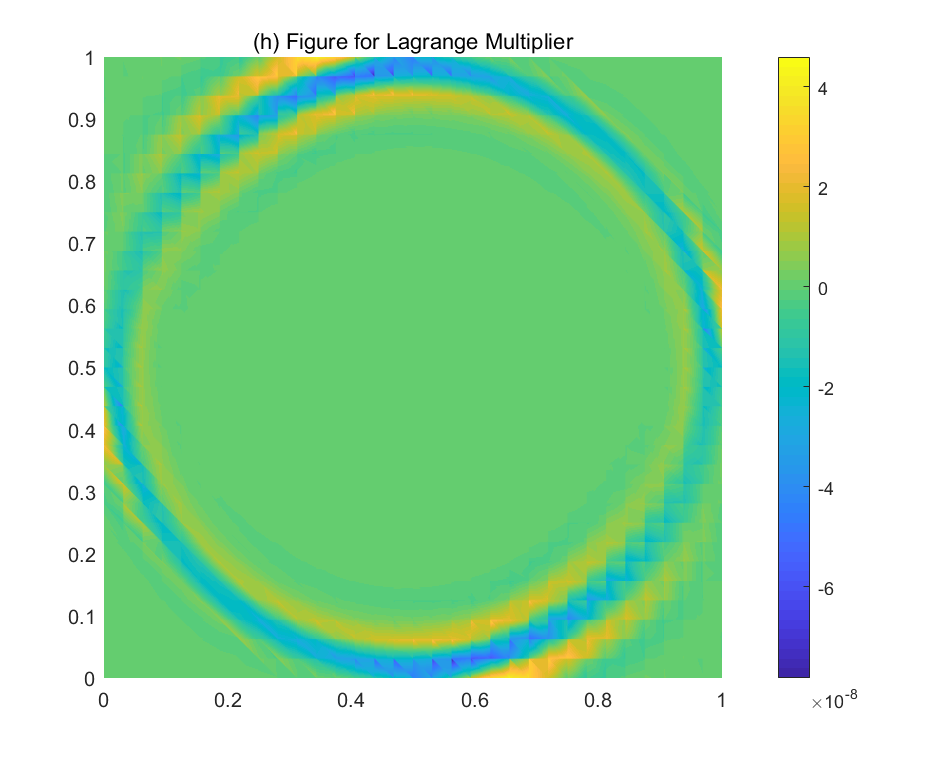}}
\end{tabular}
\caption{Contour plots for numerical solutions with $k=2$ and $j=k-1$ on $\Omega_1$, (e) the primal variable $u_h$ and (f) the Lagrange multiplier $\lambda_0$ for $f=0$, (g) the primal variable $u_h$ and (h) the Lagrange multiplier $\lambda_0$ for $f=1$, the coefficients $\{p,\rho,\tau\}=\{5,1e+12,0\}$.}\label{NE13}
\end{figure}

Figures \ref{NE14}-\ref{NE15} show the contour plots for the primal variable $u_h$ and the dual variable $\lambda_0$ in the domain $\Omega_2$ for the load function $f=0$ and $f=1$ respectively.  The configuration of this test problem is as follows: (1) the convection vector is piece-wisely defined such that $\bbeta=[y+1,-x-1]$ if $y<1-x$ and $\bbeta=[y-2,2-x]$ otherwise; (2) the reaction coefficient is $c=0$; (3) the inflow boundary data is $g =\cos(5y)$; (4) $k=2$ and $j=k-1$. We take $p=1.2$ and $p=5$.  No exact solution of primal variable is known.

\begin{figure}[!htbp]
\centering
\begin{tabular}{cccc}
\resizebox{2.25in}{1.7in}{\includegraphics{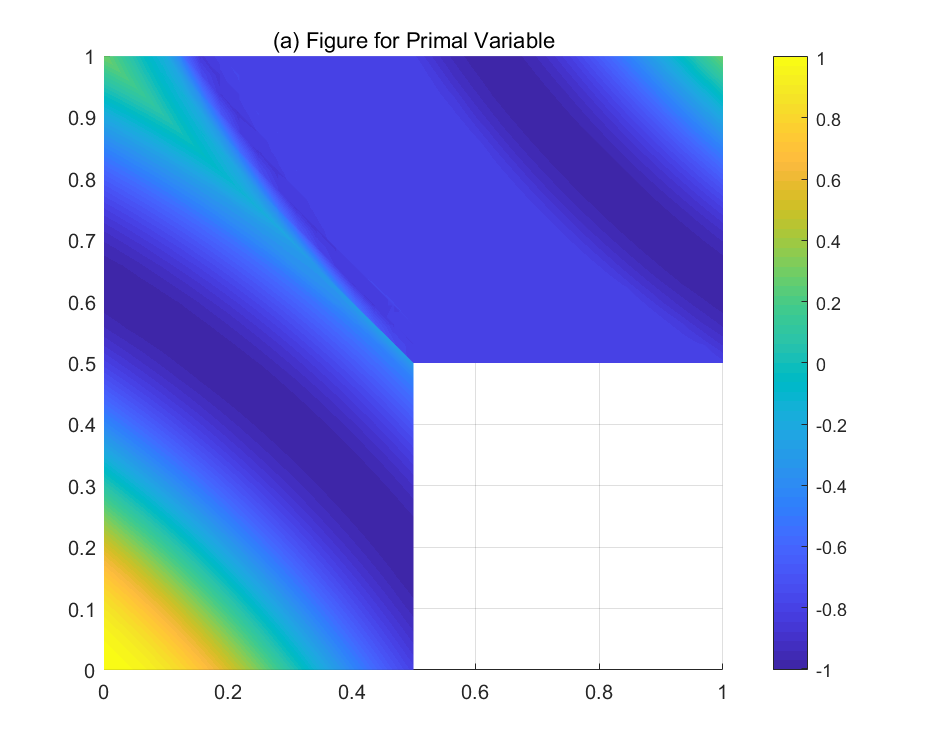}}
\resizebox{2.25in}{1.7in}{\includegraphics{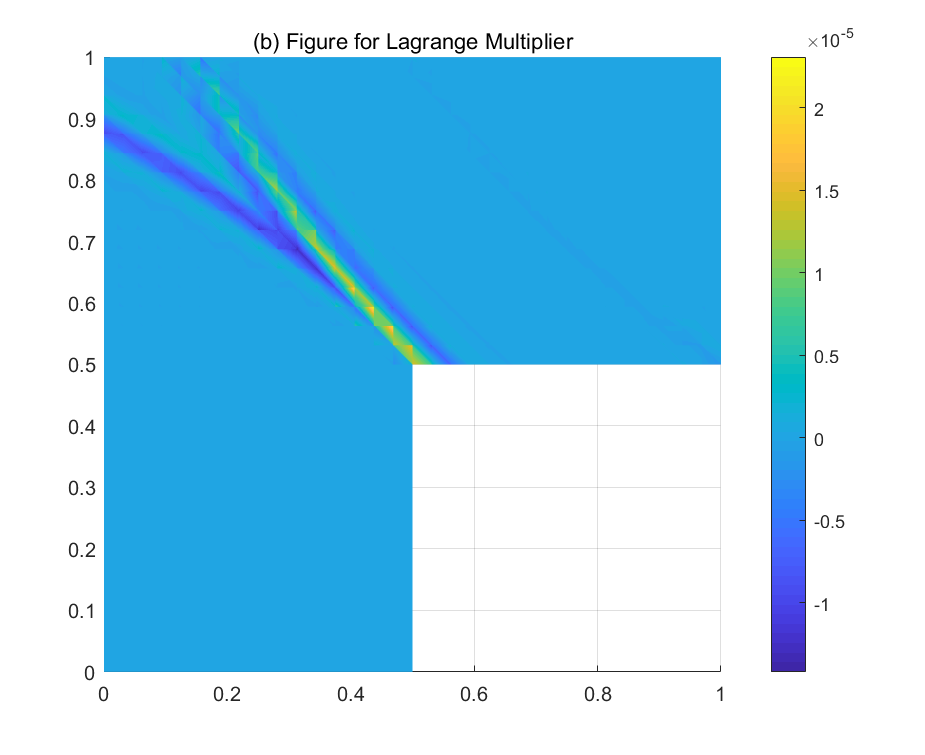}}\\
\resizebox{2.25in}{1.7in}{\includegraphics{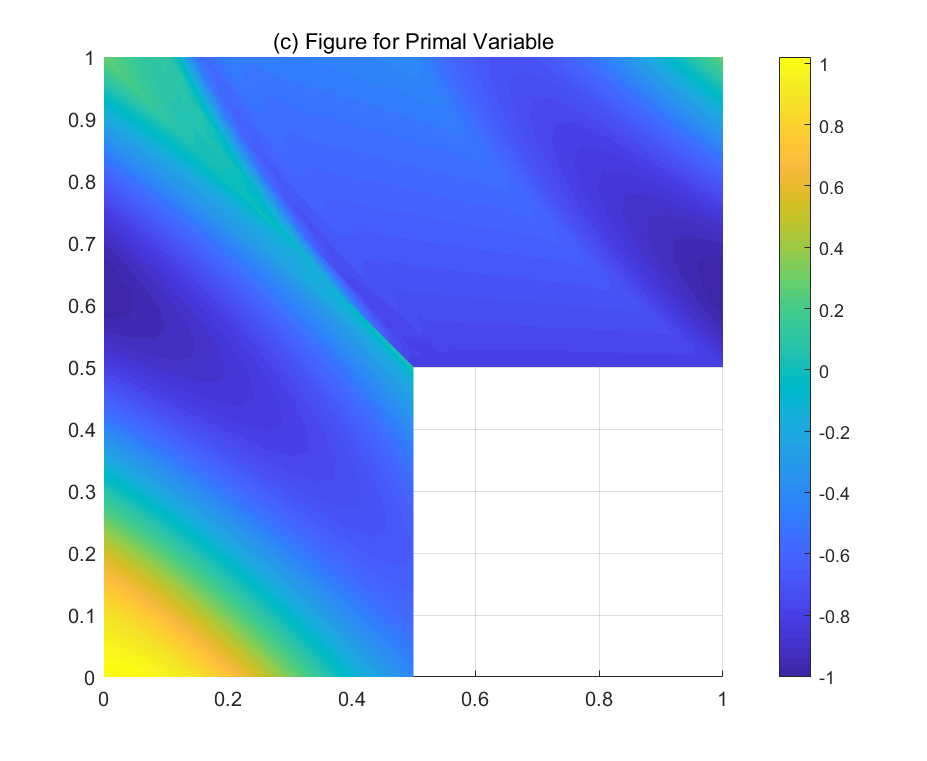}}
\resizebox{2.25in}{1.7in}{\includegraphics{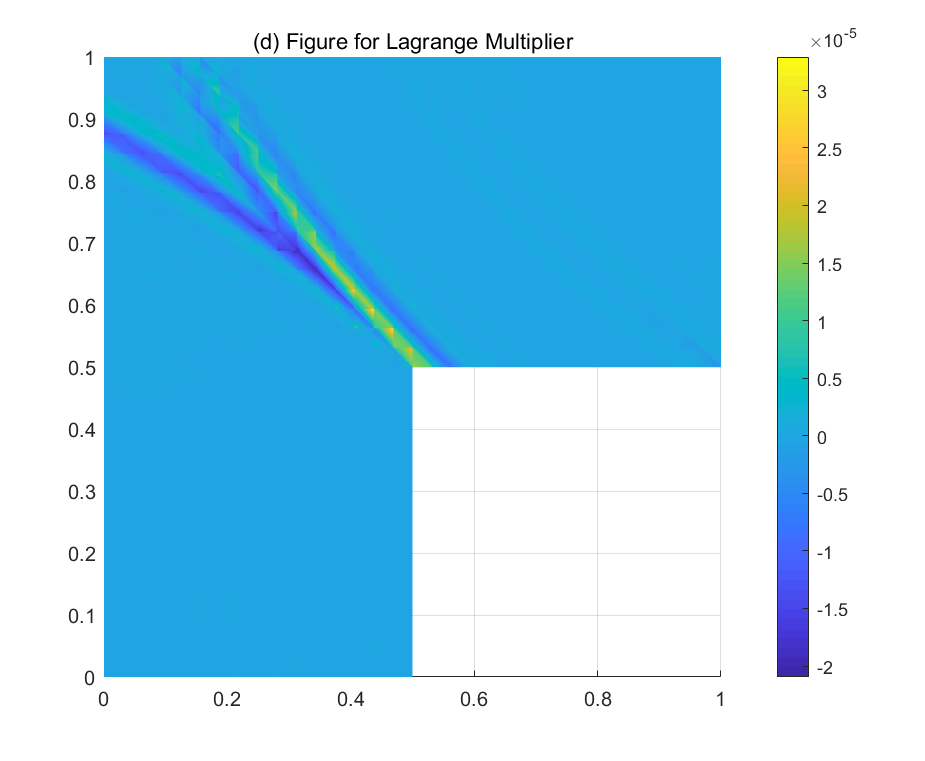}}
\end{tabular}
\caption{Contour plots for numerical solutions with $k=2$ and $j=k-1$ on $\Omega_2$, (a) the primal variable $u_h$ and (b) the Lagrange multiplier $\lambda_0$ for $f=0$, (c) the primal variable $u_h$ and (d) the Lagrange multiplier $\lambda_0$ for $f=1$, the coefficients $\{p,\rho,\tau\}=\{1.2,1,0\}$.}\label{NE14}
\end{figure}

\begin{figure}[!htbp]
\centering
\begin{tabular}{cccc}
\resizebox{2.25in}{1.7in}{\includegraphics{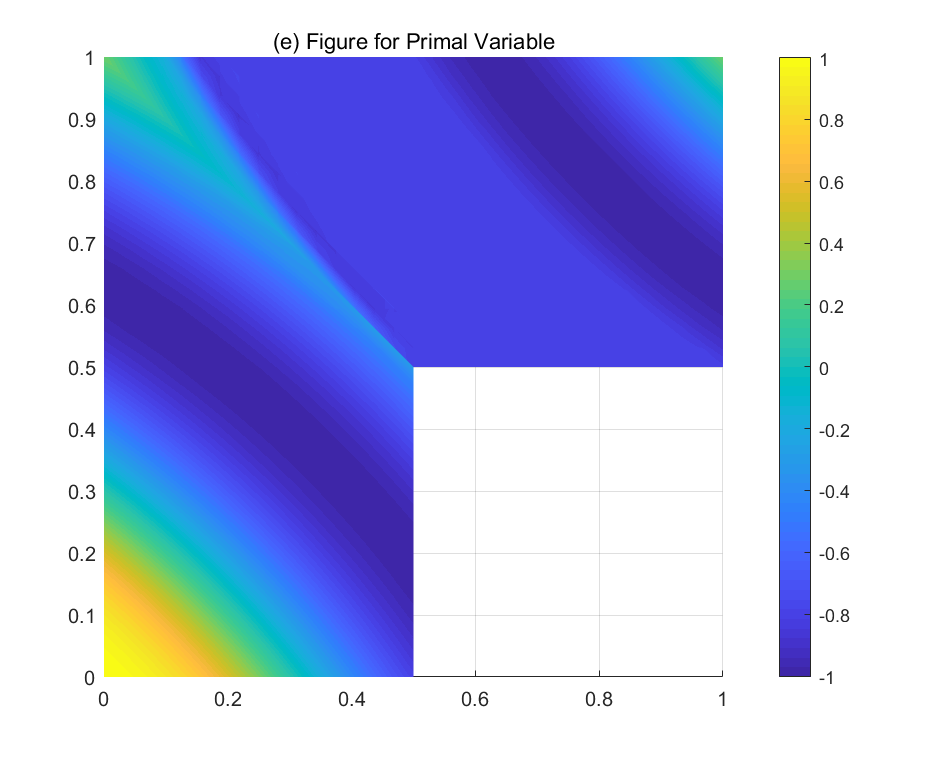}}
\resizebox{2.25in}{1.7in}{\includegraphics{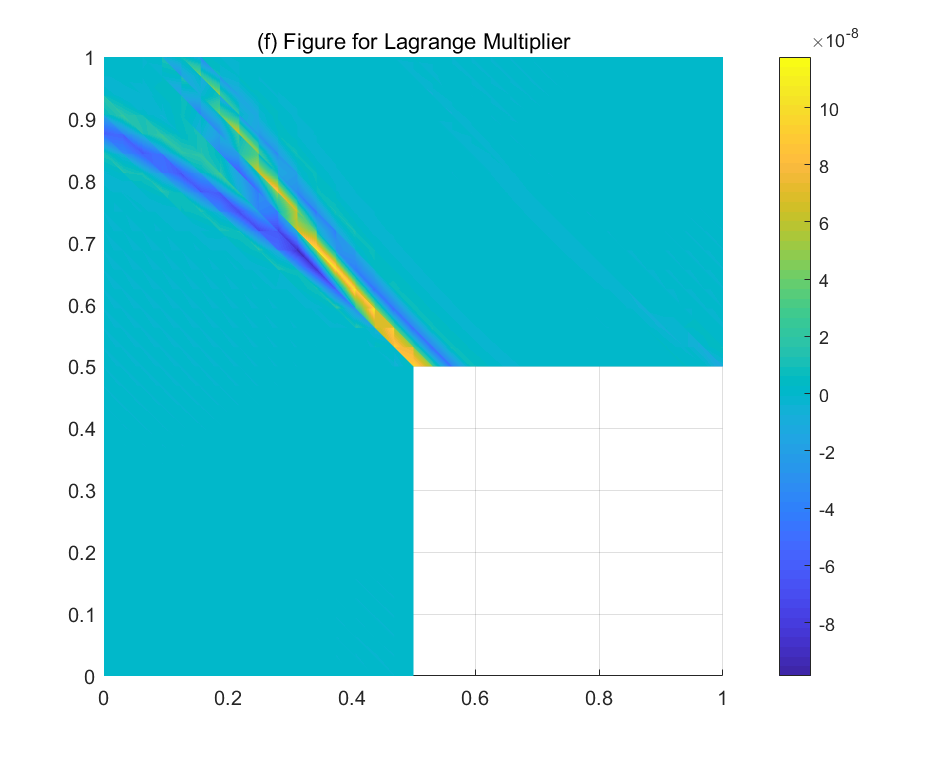}}\\
\resizebox{2.25in}{1.7in}{\includegraphics{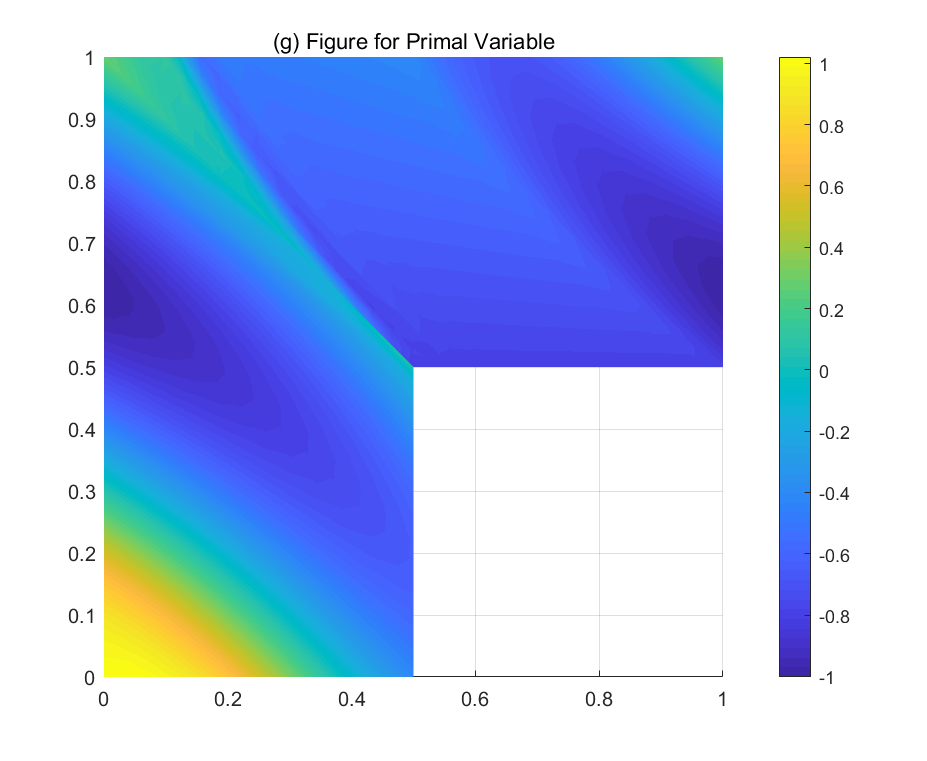}}
\resizebox{2.25in}{1.7in}{\includegraphics{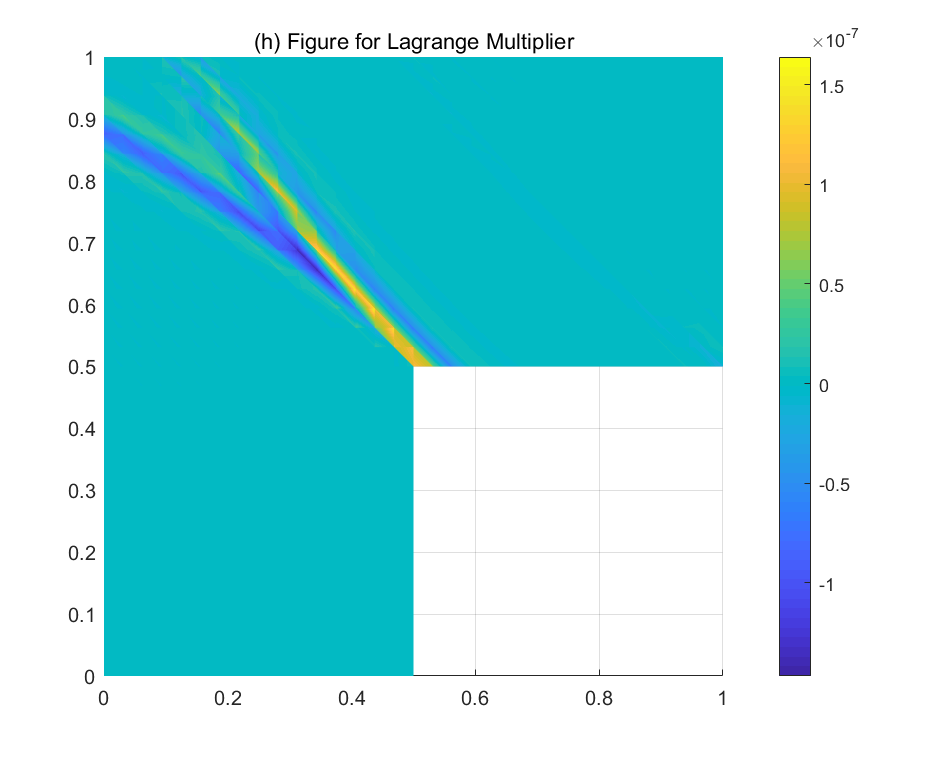}}
\end{tabular}
\caption{Contour plots for numerical solutions with $k=2$ and $j=k-1$ on $\Omega_2$, (e) the primal variable $u_h$ and (f) the Lagrange multiplier $\lambda_0$ for $f=0$, (g) the primal variable $u_h$ and (h) the Lagrange multiplier $\lambda_0$ for $f=1$, the coefficients $\{p,\rho,\tau\}=\{5,1e+12,0\}$.}\label{NE15}
\end{figure}

\end{document}